\documentclass[11pt]{article}

\usepackage[margin=1in]{geometry}
\usepackage{amsmath}
\usepackage{amsfonts}
\usepackage{amssymb}
\usepackage{graphicx}
\usepackage{amsthm}
\usepackage{mathdots}
\usepackage[usenames,dvipsnames]{xcolor}
\usepackage{tikz}
\usepackage{color}
\usepackage{hyperref}
\hypersetup{colorlinks=true,
    linkcolor=BrickRed,
    citecolor=OliveGreen,
    urlcolor=MidnightBlue}

\theoremstyle{plain}
\newtheorem{thm}{Theorem}[section]
\newtheorem{proposition}[thm]{Proposition}

\newtheorem{theorem}[thm]{Theorem}
\newtheorem{lemma}[thm]{Lemma}

\newtheorem{corollary}[thm]{Corollary}
\newtheorem{example}[thm]{Example}
\theoremstyle{definition}
\newtheorem{definition}[thm]{Definition}

\newtheorem{remark}[thm]{Remark}

\newcommand{\RR}{\mathbb{R}}
\newcommand{\R}{\mathbb{R}}
\newcommand{\Sym}{\mathbb{S}}
\newcommand{\tr}{\textup{tr}}
\newcommand{\spf}{\mathcal{S}}
\newcommand{\F}{\mathcal{F}}
\newcommand{\Fexp}{\mathcal{F}^{\textup{exp}}}
\newcommand{\lc}{\mathcal{H}}
\newcommand{\Lin}{\mathfrak{L}}
\newcommand{\X}{\mathcal{X}}

\newcommand{\N}{\mathcal{N}}
\newcommand{\cl}{\mathcal{L}}
\newcommand{\C}{\mathcal{C}}
\newcommand{\cfd}{\mathcal{K}}
\newcommand{\psd}{\succeq}

\newcommand{\loc}[2]{\textup{Loc}_{#2}(#1)}
\newcommand{\fs}{\mathcal{O}}

\title{Terracini Convexity}
\author{James Saunderson\thanks{Department of Electrical and Computer Systems Engineering,
Monash University, VIC 3800, Australia. \texttt{james.saunderson@monash.edu}}
\and Venkat Chandrasekaran\thanks{Department of Computing and Mathematical Sciences and Department of Electrical Engineering,
California Institute of Technology, Pasadena, CA 91125, USA. \texttt{venkatc@caltech.edu}}}

\begin{document}
\maketitle

\begin{abstract}
We present a generalization of the notion of neighborliness to non-polyhedral convex cones.  Although a definition of neighborliness is available in the non-polyhedral case in the literature, it is fairly restrictive as it requires all the low-dimensional faces to be polyhedral.  Our approach is more flexible and includes, for example, the cone of positive-semidefinite matrices as a special case (this cone is not neighborly in general).  We term our generalization Terracini convexity due to its conceptual similarity with the conclusion of Terracini's lemma from algebraic geometry.  Polyhedral cones are Terracini convex if and only if they are neighborly.  More broadly, we derive many families of non-polyhedral Terracini convex cones based on neighborly cones, linear images of cones of positive-semidefinite matrices, and derivative relaxations of Terracini convex hyperbolicity cones.  As a demonstration of the utility of our framework in the non-polyhedral case, we give a characterization based on Terracini convexity of the tightness of semidefinite relaxations for certain inverse problems.

\noindent \textbf{Keywords}: face lattice, hyperbolic programming, moments and nonnegative polynomials, neighborly polytopes, semidefinite programming
\end{abstract}

\section{Introduction}
\label{sec:intro}

The combinatorial view of polytopes is a pillar of polyhedral theory which has played a prominent role both in deepening our understanding of the structure of polytopes as well as in illuminating those attributes of polytopes that are significant in the context of particular applications such as linear programming.  A parallel perspective for non-polyhedral convex sets -- even in the presence of additional structure -- has generally been lacking.  This limitation may be attributed to the fact that the central object of study in polyhedral combinatorics is the face lattice, and consequently, many of the key ideas and definitions in the field are face-centric.  However, face-centric notions do not always carry over naturally to the non-polyhedral setting for a number of reasons; in particular, non-polyhedral closed convex sets consist of infinitely many faces, may contain non-exposed faces, may lack faces of all dimensions, may not be closed under linear images, and so forth.  Motivated by this broad challenge of bridging the gap in our understanding between the polyhedral and non-polyhedral cases, we focus in this article on the question of obtaining a suitable generalization of \emph{neighborliness} for non-polyhedral convex sets, with a less face-centric reformulation of neighborliness of polytopes playing a central role in our development.

A polyhedral cone that is pointed is called \emph{$k$-neighborly} if the cone over any subset of up to $k$ extreme rays forms a face \cite{grunbaum2003polytopes}.\footnote{Neighborliness is usually defined for convex polytopes but it is more convenient in this article to consider the polyhedral cones.  One can recover equivalent notions for compact convex sets by taking bases of convex cones that are closed and pointed.}  Neighborliness arises in many contexts in geometry and polyhedral combinatorics, most notably in the characterization of various extremal classes of polytopes \cite{grunbaum2003polytopes} and in conditions under which linear programming relaxations are tight for certain nonconvex inverse problems \cite{donohotanner2005neighborliness}.

\subsection{Motivation}

We are aware that there is a definition available for non-polyhedral $k$-neighborly convex cones that are closed and pointed which parallels the polyhedral setting \cite{kalai2008neighborly} -- that is, the cone over any subset of up to $k$ extreme rays forms an exposed face.  However, this notion is too restrictive in the non-polyhedral case as it essentially requires that all the low-dimensional faces are polyhedral, and, in particular, are linearly isomorphic to orthants.  This limitation restricts the utility of neighborliness in the non-polyhedral context in a number of ways.

As one example, the cone of positive semidefinite matrices is not $k$-neighborly for any $k > 1$ as all the faces other than the extreme rays are non-polyhedral, and as a consequence, neighborliness is not useful for characterizing tightness of semidefinite relaxations for nonconvex problems that are ubiquitous in many applications \cite{rfp2010nuclear,crpw2012atomic}, in contrast to the situation with linear programming.  Concretely, Donoho and Tanner \cite{donohotanner2005neighborliness} used neighborliness of polytopes to characterize the exactness of linear programming relaxations for identifying nonnegative vectors with the smallest number of nonzeros in affine spaces.  A similar characterization of the success of semidefinite relaxations for identifying low-rank positive semidefinite matrices in affine spaces -- a problem that arises in a range of applications such as factor analysis, collaborative filtering, and phase retrieval, and contains NP-hard problems as special cases -- has been lacking.  Thus, we seek a more flexible notion for non-polyhedral cones that specializes to the usual definition of neighborliness for polyhedral cones.

In a different vein, the utility of neighborliness lies in the fact that it provides a succinct characterization of the geometry of the `most singular' pieces of the boundary of a polyhedral cone.  It is of intrinsic interest to understand such geometry more generally for other families of structured cones.  Hyperbolicity cones serve as an instructive case study in this regard.  These are convex cones derived from hyperbolic polynomials, with the nonnegative orthant and the positive semidefinite matrices being prominent examples.  Relaxations based on derivatives of hyperbolicity cones offer the prospect of computationally less expensive approaches for obtaining bounds on conic optimization problems with respect to hyperbolicity cones, and an intriguing feature of these relaxations is that they tend to preserve the low-dimensional faces of the original hyperbolicity cone.  Formalizing and quantifying this assertion by leveraging the perspective of neighborliness would provide new insights into the facial geometry of a large class of structured convex cones.

In this paper, we describe a generalization of neighborliness for non-polyhedral cones that addresses the preceding objectives.

\subsection{Towards a Definition for Non-Polyhedral Cones}

In aiming at an appropriate generalization of neighborliness for non-polyhedral cones that overcomes the limitation of polyhedrality of the low-dimensional faces, a natural approach is to reformulate neighborliness via other geometric attributes that are less face-centric.  As a first attempt, for a convex cone $\C$ that is closed and pointed but not necessarily polyhedral, let $\spf_\C(x)$ denote the linear span of the smallest exposed face of $\C$ that contains $x$.  Then one can check that if the extreme rays of $\C$ are exposed, $k$-neighborliness of $\C$ is equivalent to the following condition for any collection $x^{(1)},\dots,x^{(k)}$ of generators of the extreme rays of $\C$:
\begin{equation}
\spf_\C\left(\sum_{i=1}^k x^{(i)}\right) = \sum_{i=1}^k \spf_\C\left(x^{(i)}\right). \label{eq:spfterr}
\end{equation}
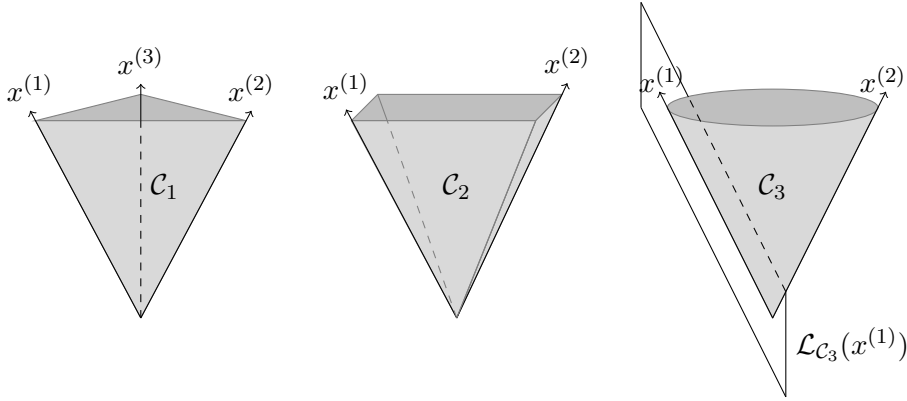
\begin{figure}
\begin{center}

  \begin{tikzpicture}[scale=0.7]

\begin{scope}[xshift=-6cm]
	\draw[gray, fill=gray!50!white] (2,-0.25) -- (-2,-0.25) -- (0,0.25) -- (2,-0.25);
	\draw[gray, fill=gray!30!white] (0,-4) -- (2,-0.25) -- (-2,-0.25) -- (0,-4);
	\draw[->] (0,-4) -- (2+0.1,-0.25+3.75*0.05);
	\draw[->] (0,-4) -- (-2-0.1,-0.25+3.75*0.05);
	\draw[dashed] (0,-4) -- (0,-0.25);
	\draw[->] (0,-0.25) -- (0,0.45);
	\node[above] (x2) at (2+0.1,-0.25+3.75*0.05) {$x^{(2)}$};
	\node[above] (x1) at (-2-0.1,-0.25+3.75*0.05) {$x^{(1)}$};
	\node[above] (x3) at (0,0.45) {$x^{(3)}$};
     
	\node[right] (C1) at (0,-1.5) {$\mathcal{C}_1$};
\end{scope}

     \draw[gray,fill=gray!50!white] (-2,-0.25) -- (-1.5,0.25) -- (2,0.25) -- (1.5,-0.25) -- (-2,-0.25);
     \draw[gray,fill=gray!30!white] (0,-4) -- (2,0.25) -- (1.5,-0.25) -- (0,-4);
     \draw[gray,fill=gray!30!white] (0,-4) -- (1.5,-0.25) -- (-2,-0.25) -- (0,-4);
     \draw[->] (0,-4) -- (-2.1,-0.25+0.05*4.25);
     \draw[gray,dashed] (0,-4) -- (-1.32,-0.25);
     \draw[gray] (-1.32,-0.25) -- (-1.5,0.25);
     \draw[->] (0,-4) -- (2.1,0.25+0.05*4.25);
     \draw[gray] (0,-4) -- (1.5,-0.25);
%
     \node[above] (x3) at (2.1,0.25+0.05*4.25) {$x^{(2)}$};
     \node[above] (x1) at (-2.1,-0.25+0.05*4.25) {$x^{(1)}$};
     \node (C2) at (0,-1.5) {$\mathcal{C}_2$};

\begin{scope}[xshift=6cm]

     \draw[gray,fill=gray!30!white] (-2,0) -- (0,-4) -- (2,0) -- (-2,0);
     \draw[gray,fill=gray!50!white] (0,0) circle(2cm and 0.35cm);


     \node (C3) at (0,-1.5) {$\mathcal{C}_3$};

     \draw[black] (-2.5,1-1) -- (-2.5,1+1);
     \draw[black] (0.25,-4.5-1) -- (0.25,-4.5+1);
     \draw[black] (0.25,-4.5-1) -- (-2.5,1-1);
     \draw[black,dashed] (0.25,-4.5+1) -- (-1.6,0.2);
     \draw[black] (-2.5,2) -- (-1.6,0.2);

\draw[->] (0,-4) -- (-2.15,0.3);
    \draw[->] (0,-4) -- (2.15,0.3);
    \node[above] (x1) at (-2.1,0.1) {$x^{(1)}$};
    \node[above] (x2) at (2.1,0.1) {$x^{(2)}$};
    \node[right] (L1) at (0.25,-4.5) {$\mathcal{L}_{\mathcal{C}_3}(x^{(1)})$};
\end{scope}

  \end{tikzpicture}

\end{center}
\caption{\label{fig:intro} Illustration of neighborliness properties of three cones.  $\C_1$ is neighborly while $\C_2$ is not.  $\C_3$ is not neighborly but it serves as an instructive example for the definition of Terracini convexity.}
\end{figure}
One can check that the left-hand-side of this equation always contains the right-hand-side, with the containment being strict in general and equality holding only for $k$-neighborly cones.  It is instructive to consider the three cones in $\R^3$ that are shown in Figure~\ref{fig:intro} from the perspective of the relation \eqref{eq:spfterr}.  The cone $\C_1$ is isomorphic to the orthant in $\R^3$, which is $3$-neighborly, and therefore the relation \eqref{eq:spfterr} holds for any subset of the generators of the three extreme rays.  The cone $\C_2$ is not $2$-neighborly as the cone over the generators $x^{(1)}, x^{(2)}$ is not a face of $\C_2$; accordingly, we note that $\spf_{\C_2}(x^{(1)} + x^{(2)}) \supsetneq \spf_{\C_2}(x^{(1)}) + \spf_{\C_2}(x^{(2)})$.  Finally, the ice-cream cone $\C_3$ is evidently not $2$-neighborly by considering the cone over the generators $x^{(1)}, x^{(2)}$; as expected, we again have the strict containment $\spf_{\C_3}(x^{(1)} + x^{(2)}) \supsetneq \spf_{\C_3}(x^{(1)}) + \spf_{\C_3}(x^{(2)})$.  The cone $\C_3$ presents an interesting case study as it is also linearly isomorphic to the cone of $2 \times 2$ symmetric positive semidefinite matrices.  As mentioned previously, developing a suitable generalization of neighborliness that encompasses the cone of positive semidefinite matrices is one of the motivations for this article, and we investigate next what precisely fails with the relation \eqref{eq:spfterr} for $\C_3$.

For a polyhedral cone $\C$ that is pointed, the map $\spf_\C(x)$ represents a kind of ``local linearization'' of $\C$ around the point $x$; concretely, the set $\spf_\C(x)$ is the largest subspace -- also called the \emph{lineality space} -- in the cone of feasible directions from $x$ into $\C$.  However, the interpretation of $\spf_\C(x)$ as a local linearization of $\C$ at $x$ no longer holds in general if $\C$ is not polyhedral.  For the cone $\C_3$ in Figure~\ref{fig:intro}, the set $\spf_{\C_3}(x^{(1)})$ does not fully represent a local linearization of $\C_3$ around $x^{(1)}$ as it fails to account for the curvature of the boundary of $\C_3$ at $x^{(1)}$.  Rather, the subspace $\cl_{\C_3}(x^{(1)})$ in Figure~\ref{fig:intro}, akin to a tangent space at $x^{(1)}$ with respect to the boundary of $\C_3$, provides a more accurate local linearization of $\C_3$ at $x^{(1)}$.  Letting $\cl_{\C_3}(x^{(2)})$ similarly denote an accurate local linearization of $\C_3$ at $x^{(2)}$, we observe that $\cl_{\C_3}(x^{(1)})+\cl_{\C_3}(x^{(2)}) = \R^3$.  As $x^{(1)}+x^{(2)}$ lies in the interior of $\C_3$, a natural local linearization of $\C_3$ at $x^{(1)} + x^{(2)}$ is the full space $\R^3$, i.e., $\cl_{\C_3}(x^{(1)}+x^{(2)}) = \R^3$.  Consequently, we have that the relation \eqref{eq:spfterr} holds for $\C_3$ with $k=2$ if we substitute $\spf_{\C_3}$ with $\cl_{\C_3}$.  Motivated by this discussion, our generalization of neighborliness to closed, convex, pointed cones is based on a criterion analogous to \eqref{eq:spfterr} with a more accurate notion of local linearization; as we discuss in the sequel, this criterion is satisfied by neighborly polyhedral cones, cones of positive semidefinite matrices, as well as many other families.

\subsection{Terracini Convex Cones}
We begin by giving a formal definition of the map $\cl_{\C}(x)$.  In the example with the cone $\C_3$ from Figure~\ref{fig:intro}, the set $\cl_{\C_3}(x)$ corresponds to a tangent space.  However, convex cones in general have both smooth and singular features in their boundary, and therefore we do not explicitly appeal to any differential notions.  Our definition is stated in terms of the feasible directions $\cfd_\C(x)$ into a convex cone $\C \subset \R^d$ that is closed and pointed from any $x \in \C$:
\begin{equation*}
\cfd_\C(x) = \text{cone}\{z - x \;:\; z \in \C\}.
\end{equation*}
The closure of the cone of feasible directions $\overline{\cfd_\C(x)}$ is called the \emph{tangent cone} of $\C$ at $x$.
\begin{definition}
Let $\C \subset \R^d$ be a convex cone that is closed and pointed.  For any $x \in \C$, the \emph{convex tangent space} of $\C$ at $x$ is denoted by $\cl_\C(x)$ and is defined as the lineality space of the tangent cone of $\C$ at $x$:
\begin{equation*}
\cl_\C(x) = \overline{\cfd_\C(x)} \cap -\overline{\cfd_\C(x)}.
\end{equation*}
\end{definition}
In some sense, the subspace $\cl_\C(x)$ represents all those directions from $x$ in which the cone $\C$ is locally ``flat''.  For smooth convex cones $\C$ that are closed and pointed, the convex tangent space $\cl_\C(x)$ at a point $x$ ($\neq 0$) on the boundary is indeed the tangent space with respect to the boundary of $\C$ at $x$.  For polyhedral cones $\C$ that are pointed, one can check that $\cl_\C(x) = \spf_\C(x)$. With this definition, we are in a position to present the main object of investigation of this article.
\begin{definition}
	\label{def:kterracini}
A convex cone $\C \subset \R^d$ that is closed and pointed is \emph{$k$-Terracini convex} if the following condition holds for any collection $x^{(1)}, \dots, x^{(k)}$ of generators of extreme rays of $\C$:
\begin{equation}\label{eq:kterracini}
\cl_\C\left(\sum_{i=1}^k x^{(i)}\right) = \sum_{i=1}^k \cl_\C\left(x^{(i)}\right).
\end{equation}
If $\C$ is $k$-Terracini convex for all $k$, then we say that $\C$ is \emph{Terracini convex}.
\end{definition}
One inclusion always holds as $\cl_\C\left(\sum_{i=1}^k x^{(i)}\right) \supseteq \sum_{i=1}^k \cl_\C\left(x^{(i)}\right)$, and the relevant portion of this definition is the other inclusion.  The reason for the terminology `Terracini convexity' is that the stipulation in this definition mirrors the consequence of Terracini's lemma in algebraic geometry \cite{terracini}, with convex tangent space playing the role in our context that a tangent space does in Terracini's lemma.\footnote{Consider a projective variety $\mathcal{V}$ over an algebraically closed field, and let $\mathcal{V}^{(k)}$ be the $k$'th secant variety of $\mathcal{V}$ obtained by taking the closure of the set of spans of every collection of $k$ points in $\mathcal{V}$.  Informally, Terracini's lemma states that for $k$ generic points $X^{(1)},\dots,X^{(k)} \in \mathcal{V}$, the tangent space at a generic point in the span of $\{X^{(1)},\dots,X^{(k)}\}$ with respect to $\mathcal{V}^{(k)}$ is equal to the sum of the tangent spaces at each $X^{(i)}$ with respect to $\mathcal{V}$.}  We give next some preliminary examples of $k$-Terracini convex cones:

\begin{example}
To begin with, it is instructive to compare $k$-Terracini convexity to $k$-neighborliness for polyhedral cones.  For a polyhedral cone $\C$ that is pointed, we observed previously that $\cl_\C(x) = \spf_\C(x)$ for $x \in \C$.  As $\C$ has exposed extreme rays and as the relation \eqref{eq:spfterr} is equivalent to $k$-neighborliness, we have that $k$-Terracini convexity and $k$-neighborliness are equivalent for pointed polyhedral cones.  We also prove this fact as a special case of a more general result (see Theorem~\ref{thm:neighborly} and Corollary~\ref{cor:polyneighborly}).
\end{example}

\begin{example}
All convex cones that are closed and pointed are trivially $1$-Terracini convex.  As a contrast, based on the generalization of \cite{kalai2008neighborly} of neighborliness to non-polyhedral cones, a convex cone that is closed and pointed is $1$-neighborly if and only if all its extreme rays are exposed.
\end{example}

\begin{example}
	Let $\C \subset \R^d$ be a smooth convex cone that is closed and pointed.  Then $\C$ is Terracini convex.  To see this, consider any collection $x^{(1)},\dots,x^{(k)}$ of generators of extreme rays of $\C$.  Due to the smoothness of $\C$, we have that $\sum_{i=1}^k \cl_\C\left(x^{(i)}\right) = \textup{span}(\C)$ for $k\geq 2$, unless all the $x^{(i)}$'s generate the same extreme ray (in which case the Terracini convexity condition is trivially satisfied).
\end{example}

\begin{example}
As our next example, we consider the cone of positive semidefinite matrices $\Sym^d_+$ in the space of $d \times d$ real symmetric matrices $\Sym^d$.  This cone consists of both smooth and singular features in its boundary.  For $X \in \Sym^d_+$, one can check that $\cl_{\Sym^d_+}(X) = \{MX + XM \;:\; M \in \Sym^n\}$, from which it follows that $\Sym^d_+$ is Terracini convex.  We give an alternative proof of this fact via a dual perspective on Terracini convexity; see Example~\ref{example:psd} after Proposition~\ref{prop:dual-def}.
\end{example}

It is instructive to consider the definition of Terracini convexity from a dual perspective, as this leads to a characterization that is more easily verified in some cases.  In preparation to state this dual criterion, we recall that the polar of a cone $\mathcal{S} \subset \R^d$ is the collection of linear functionals that are nonpositive on $\mathcal{S}$ and is denoted $\mathcal{S}^\circ$.  With this notation, the \emph{normal cone} to a convex cone $\C \subset \R^d$ at $x \in \C$ is denoted $\N_\C(x)$ and is the polar $\cfd_\C(x)^\circ$ of the cone of feasible directions from $x$ into $\C$.  As $\C$ is a cone, one can check that the normal cone to $\C$ at $x \in \C$ is given by:
\begin{equation} \label{eq:normalcone}
\N_{\C}(x) = \cfd_{\C}(x)^\circ = \{\ell \in \C^\circ ~:~ \ell(x) = 0\},
\end{equation}
which is the set of linear functionals that are nonpositive on $\C$ and vanish at $x$.  We now establish an equivalent dual formulation of Terracini convexity.
\begin{proposition} \label{prop:dual-def}
A closed, pointed, convex cone $\C \subset \R^d$ is $k$-Terracini convex if and only if for any collection $x^{(1)},\ldots,x^{(k)}$ of generators of extreme rays of $\C$,
\begin{equation} \label{eq:dual-def}
\textup{span}\left(\bigcap_{i=1}^{k}\N_{\C}(x^{(i)})\right) = \bigcap_{i=1}^{k}\textup{span}\left(\N_{\C}(x^{(i)})\right).
\end{equation}
\end{proposition}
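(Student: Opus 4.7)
The plan is to dualize \eqref{eq:kterracini} by taking orthogonal complements of both sides and translating between convex tangent spaces and normal cones. Two elementary observations will drive the argument. First, I would record that for any closed convex cone $K \subset \R^d$ the lineality space satisfies $K \cap (-K) = \textup{span}(K^\circ)^\perp$: by the bipolar theorem, $y \in K \cap (-K)$ iff $\ell(y)\le 0$ and $\ell(-y)\le 0$ for every $\ell \in K^\circ$, i.e.\ iff $\ell(y) = 0$ for every such $\ell$. Applying this to $K = \overline{\cfd_\C(x)}$, whose polar is $\N_\C(x)$, yields the identity $\cl_\C(x)^\perp = \textup{span}(\N_\C(x))$. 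Second, using the characterization \eqref{eq:normalcone}, any $\ell \in \C^\circ$ satisfies $\ell(x^{(i)}) \le 0$ for each $i$, so $\ell\bigl(\sum_i x^{(i)}\bigr)=0$ if and only if $\ell(x^{(i)}) = 0$ for every $i$; consequently $\N_\C\bigl(\sum_{i=1}^k x^{(i)}\bigr) = \bigcap_{i=1}^k \N_\C(x^{(i)})$.

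With these two facts I would simply orthogonally complement both sides of \eqref{eq:kterracini}. The left-hand side transforms as
\[
\cl_\C\Bigl(\textstyle\sum_i x^{(i)}\Bigr)^{\perp} = \textup{span}\Bigl(\N_\C\bigl(\textstyle\sum_i x^{(i)}\bigr)\Bigr) = \textup{span}\Bigl(\bigcap_i \N_\C(x^{(i)})\Bigr),
\]
while the right-hand side, via $(A+B)^\perp = A^\perp \cap B^\perp$ for subspaces, transforms as
\[
\Bigl(\textstyle\sum_i \cl_\C(x^{(i)})\Bigr)^{\perp} = \bigcap_i \cl_\C(x^{(i)})^\perp = \bigcap_i \textup{span}(\N_\C(x^{(i)})).
\]
Since both sides of \eqref{eq:kterracini} are subspaces, the equality holds iff their orthogonal complements agree, so \eqref{eq:kterracini} for the tuple $x^{(1)},\ldots,x^{(k)}$ is equivalent to \eqref{eq:dual-def} for the same tuple. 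Quantifying over all $k$-tuples of extreme-ray generators gives the proposition.

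The hard part will be little more than careful bookkeeping: verifying that $\overline{\cfd_\C(x)}$ is a closed convex cone so that the bipolar theorem applies, and keeping track of which closures and spans appear where. The conceptual core is the duality between the lineality space of a closed convex cone and the span of its polar, together with the elementary observation that a functional in $\C^\circ$ vanishes on a sum of points of $\C$ precisely when it vanishes on each summand.
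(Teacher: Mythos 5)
Your proof is correct and follows essentially the same route as the paper: dualize \eqref{eq:kterracini} by taking orthogonal complements, use the identity $\cl_\C(x)^\perp = \textup{span}(\N_\C(x))$, and reduce to the fact that $\N_\C\bigl(\sum_i x^{(i)}\bigr) = \bigcap_i \N_\C(x^{(i)})$ because a functional in $\C^\circ$ vanishing on a sum of points of $\C$ must vanish on each summand. Your explicit justification of $\cl_\C(x)^\perp = \textup{span}(\N_\C(x))$ via the lineality-space/polar-span duality is a slightly more detailed version of what the paper asserts in one line, but the argument is the same.
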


\begin{remark}
	\label{rmk:inclusion}
In the result above, one inclusion is trivial -- we always have that the span of the
intersection of the normal cones is contained inside the intersection of the spans of the normal cones.  Terracini convexity corresponds to the reverse inclusion being true, and this is all we need to verify.  This remark is dual to the assertion after Definition~\ref{def:kterracini} about one inclusion always being true.
\end{remark}

\begin{proof}
The normal cone and the closure of the cone of feasible directions at a point $x \in \C$
are related via $\N_{\C}(x) = \cfd_{\C}(x)^\circ = \overline{\cfd_{\C}(x)}^\circ$, which implies that $\cl_{\C}(x)^\perp = \textup{span}(\N_{\C}(x))$.  Taking orthogonal complements in the definition of $k$-Terracini convexity, we see that $\C$ is $k$-Terracini convex if and only if for any collection $x^{(1)},\ldots,x^{(k)}$ of generators of extreme rays of $\C$,
\begin{equation} \label{eq:ncdef}
\textup{span}\left(\N_{\C}\left(\sum_{i=1}^kx^{(i)}\right)\right) = \bigcap_{i=1}^k \textup{span}\left(\N_{\C}(x^{(i)})\right).
\end{equation}
Here we have used that the orthogonal complement of a sum of subspaces is the intersection
of the orthogonal complements.  To complete the proof, we note that
$\N_{\C}\left(\sum_{i=1}^k x^{(i)}\right) = \bigcap_{i=1}^k \N_{\C}(x^{(i)})$
whenever $x^{(1)},\ldots,x^{(k)}\in \C$. For one inclusion, if $\ell \in \C^\circ$
and $\ell(x^{(i)}) = 0$ then $\ell\left(\sum_{i=1}^k x^{(i)}\right)=0$.
For the other inclusion, if $\ell \in \C^\circ$ and
$\ell\left(\sum_{i=1}^k x^{(i)}\right) = \allowbreak \sum_{i=1}^k \ell(x^{(i)}) = \allowbreak 0$, then we have that $\ell(x^{(i)}) \leq 0$ for each $i$ (as $\ell \in \C^\circ$) and therefore $\ell(x^{(i)}) = 0$ for each $i$ (as $\sum_{i=1}^k \ell(x^{(i)}) = 0$).
\end{proof}

To illustrate the utility of this dual formulation, we show that the positive semidefinite cone is Terracini convex.
\begin{example}[Positive semidefinite cone]\label{example:psd}
Let $\C = \Sym^d_+$ be the cone of $d\times d$ positive semidefinite matrices.  Given an extreme ray $vv'$ for $v \in \R^d$, the corresponding normal cone from \eqref{eq:normalcone} is $\N_{\C}(vv') = \{Q \in -\Sym^d_+ ~:~ v'Qv = 0\} = \{Q \in -\Sym^d_+ ~:~ Qv = 0\}$.  For any collection of generators of extreme rays $v^{(1)}{v^{(1)}}', \dots, v^{(k)}{v^{(k)}}'$ of $\C$ for $v^{(1)},\ldots,v^{(k)}\in \RR^d$, we have that:
\begin{equation*}
\begin{aligned}
\textup{span}\left(\bigcap_{i=1}^k \N_{\C}\left(v^{(i)}{v^{(i)}}'\right) \right) &= \{Q \in \Sym^d ~:~ Q v^{(i)} = 0, ~ i = 1,\dots,k\} \\ &= \bigcap_{i=1}^k \{Q \in \Sym^d ~:~ Q v^{(i)} = 0\}.
\end{aligned}
\end{equation*}

As $\mathrm{span}\left(\N_{\C}\left(v^{(i)}{v^{(i)}}'\right)\right) = \{Q \in \Sym^d ~:~ Q v^{(i)} = 0\}$ and as $k$ was arbitrary, it follows that $\Sym_+^d$ is Terracini convex.
\end{example}

\subsection{Outline of Contributions}
We initiate our study of Terracini convex cones by investigating the face structure of such cones.  Specifically, in Section~\ref{sec:basics} we provide two conditions for a closed, pointed, convex cone to be Terracini convex based on order-theoretic properties of the faces of the cone.  The first condition states that if a cone is $k$-Terracini convex for a sufficiently large $k$, which is a function of the height of the partially ordered set of faces, then the cone is Terracini convex.  The second condition gives a necessary and sufficient characterization for a cone to be Terracini convex based on the collection of all convex tangent spaces of the cone inheriting some of the lattice structure of the subspace lattice.

From the examples in the previous subsection we see that Terracini convexity is equivalent to neighborliness for polyhedral cones, but there are many families of non-polyhedral cones that are also Terracini convex.  Thus, a natural question is to clarify the distinction between Terracini convexity and neighborliness for non-polyhedral cones.  In one direction, the cone of positive semidefinite matrices serves as an example that there are Terracini convex cones that are not neighborly.  In the other direction, we prove in Section~\ref{sec:neighborly} that subject to a non-degeneracy condition that is of the form of a quadratic growth property, $k$-neighborly cones are $k$-Terracini convex.  As a consequence of this result, we obtain that the cone over the (homogeneous) moment curve, which was studied by Kalai and Wigderson in \cite{kalai2008neighborly}, is Terracini convex; see Section~\ref{sec:tcnc} for more examples.

Next we demonstrate the utility of the notion of Terracini convexity in characterizing tightness of semidefinite relaxations for the problem of finding a positive semidefinite matrix of smallest rank in an affine space. A commonly employed heuristic to solve this problem is to compute the positive semidefinite matrix of smallest trace in the given affine space, which can be obtained via a tractable semidefinite program.  In Section~\ref{sec:inverse}, we show that the success of this heuristic is closely tied to a certain cone being Terracini convex.  Our result may be viewed as a generalization of Donoho and Tanner's result on using neighborliness to characterize the exactness of linear programming relaxations for identifying nonnegative vectors with the smallest number of nonzeros in affine spaces \cite{donohotanner2005neighborliness}.  As a by-product of our result, we obtain that `most' linear images of a cone of positive semidefinite matrices are $k$-Terracini convex, where the value of $k$ depends on the dimension of the image of the linear map; see Theorem~\ref{thm:most-tc}.

In Section~\ref{sec:hyperbolic}, we investigate the Terracini convexity properties of derivative relaxations of hyperbolicity cones.  We study conditions under which derivatives of Terracini convex hyperbolicity cones continue to be $k$-Terracini convex (for suitable $k$), and in particular the relationship between the number of derivatives and $k$.  As a consequence, we obtain new examples of Terracini convex cones, and in particular ones that are basic semialgebraic; it is instructive to contrast these examples with the ones described in Section~\ref{sec:inv-new} of linear images of cones of positive semidefinite matrices, which are semialgebraic but not necessarily basic semialgebraic.

Sections~\ref{sec:neighborly}, \ref{sec:inverse}, and \ref{sec:hyperbolic} illustrate the role that Terracini convexity plays in illuminating various aspects of the facial structure of convex cones.  In each case, we obtain new examples of Terracini convex cones in the course of our discussion.  We conclude in Section~\ref{sec:discussion} with some open questions.

\section{Order-Theoretic Conditions for Terracini Convexity}
\label{sec:basics}

In this section we discuss conditions under which a closed, pointed, convex cone is Terracini convex based on the order structure underlying the faces of a convex cone.  Section~\ref{sec:tcheight} shows that a cone that is $k$-Terracini convex for sufficiently large $k$ is Terracini convex, with the threshold value of $k$ depending on the length of the longest chain of faces of the cone.  In Section~\ref{sec:tclattice} we give a lattice-theoretic condition on the collection of lineality spaces that is necessary and sufficient for a cone to be Terracini convex.

In preparation for our discussion, we recall briefly a few relevant facts about the face structure of a convex cone.  Let $\C$ be a closed, pointed, convex cone.  A subset $\F \subseteq \C$ is a \emph{face} if $x,y \in \C$ and $x+y \in \F$ implies that $x,y \in \F$.  A face $\F \subseteq \C$ is \emph{exposed} if $\F$ can be expressed as the intersection of $\C$ and a hyperplane specified by a linear functional $\ell \in \C^\circ$, i.e., $\F = \{x \in \C \;:\; \ell(x) = 0\}$.  By convention $\C$ is itself an exposed face as one can take $\ell = 0$.  The collection of (exposed) faces of $\C$ form a partially ordered set (poset) by inclusion.  For any subset $\X \subseteq \C$, let $\F_\C(\X)$ (respectively, $\Fexp_\C(\X)$) denote the inclusion-wise minimal (exposed) face of $\C$ containing $\X$.  For any element $x \in \C$, one can check that the normal cone $\N_\C(x)$ depends only on $\Fexp_\C(x)$, which in turn depends only on $\F_\C(x)$; consequently, the convex tangent space $\cl_\C(x)$ depends only on $\Fexp_\C(x)$ and in turn $\F_\C(x)$ \cite{rockafellar2015convex}.  Formally, for any $x^{(1)}, x^{(2)} \in \C$:
\begin{align} 	
	\F_\C(x^{(1)}) = \F_\C(x^{(2)}) ~ & \Leftrightarrow ~ \Fexp_\C(x^{(1)}) = \Fexp_\C(x^{(2)})\nonumber\\
	&\Leftrightarrow ~ \N_\C(x^{(1)}) = \N_\C(x^{(2)}) \Leftrightarrow ~ \cl_\C(x^{(1)}) = \cl_\C(x^{(2)}).\label{eq:face-equivalence}
\end{align}

\subsection{Terracini Convexity and the Height of the Poset of Faces}
\label{sec:tcheight}
Given a closed, pointed, convex cone $\C$, consider a collection of points $x^{(1)}, \dots, x^{(k)} \in \C$.  For large $k$, it is possible to replace the convex tangent space $\cl_\C(\sum_{i=1}^k x^{(i)})$ by $\cl_\C(\sum_{i \in I} x^{(i)})$ for a subset $I \subseteq \{1,\dots,k\}$ that is potentially much smaller than $k$, by appealing to the observation that the convex tangent space at a point depends only on the smallest face containing the point.  This allows us to conclude that if $\C$ is $k$-Terracini convex for sufficiently large $k$, then $\C$ is Terracini convex.

We describe next the relevant terminology that we use in our result.  A collection of faces $\F^{(i)}, ~ i=1,\dots,m$ of $\C$ that satisfies $\F^{(1)} \subsetneq \cdots \subsetneq \F^{(m)}$ is called a \emph{chain} of faces.  For a closed, pointed, convex cone $\C$, let $\lc(\C)$ denote the \emph{height} of the poset of faces of $\C$, which is the length of the longest chain of faces of $\C$.  As the dimension always increases strictly along chains of faces and as any maximal-length chain of faces begins with the zero-dimensional face\footnote{We do not consider the empty set to be a face of $\C$.} $\{0\}$ and ends with $\C$, we have that $\lc(\C) \leq \textup{dim}(\C)+1$.  We have next a result that allows us to replace the convex tangent space of a large sum of elements of $\C$ by that of a smaller subset based on $\lc(\C)$:

\begin{lemma}
	\label{lem:lcf}
Let $\C$ be a closed, pointed, convex cone, and consider a collection of points $x^{(1)},\ldots,x^{(k)}\in \C$.  There exists $I \subseteq \{1,\dots,k\}$ with $|I| \leq \lc(\C)-1$ such that
$\F_{\C}\left(\sum_{i=1}^k x^{(i)}\right) = \F_{\C}\left(\sum_{i\in I}x^{(i)}\right)$.
\end{lemma}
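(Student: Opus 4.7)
The plan is to construct $I$ greedily by adding indices one at a time, using the well-known property that a face of $\C$ absorbs summands: if $y_1, y_2 \in \C$ and $F$ is a face of $\C$ containing $y_1 + y_2$, then $y_1, y_2 \in F$. Consequently, for any subset $J \subseteq \{1, \ldots, k\}$ the face $\F_{\C}(\sum_{i \in J} x^{(i)})$ contains every $x^{(i)}$ with $i \in J$, and so coincides with the smallest face of $\C$ containing the set $\{x^{(i)} : i \in J\}$.

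With this in hand I would initialize $I_0 = \emptyset$ and $\F_0 := \F_{\C}(0) = \{0\}$ (using pointedness of $\C$). At step $t$, if there exists $j \in \{1,\ldots,k\}\setminus I_t$ with $x^{(j)} \notin \F_t$, then set $I_{t+1} = I_t \cup \{j\}$ and $\F_{t+1} = \F_{\C}(\sum_{i \in I_{t+1}} x^{(i)})$; otherwise terminate and output $I := I_t$. Each update strictly enlarges the face: the absorbing property yields $\F_{t+1} \supseteq \F_\C(\sum_{i \in I_t} x^{(i)}) = \F_t$, and $\F_{t+1}$ contains the newly added $x^{(j)} \notin \F_t$, so the inclusion is proper. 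At termination every $x^{(i)}$ lies in $\F_T$ (those with $i \in I_T$ by absorption, those with $i \notin I_T$ by the termination condition), so $\sum_{i=1}^k x^{(i)} \in \F_T$ and hence $\F_\C(\sum_{i=1}^k x^{(i)}) \subseteq \F_T$. The reverse inclusion is one more application of absorption: $\F_\C(\sum_{i=1}^k x^{(i)})$ contains each $x^{(i)}$ and therefore contains $\sum_{i \in I_T} x^{(i)}$.

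To bound $|I|$, observe that the strictly ascending chain $\F_0 \subsetneq \F_1 \subsetneq \cdots \subsetneq \F_T$ consists of $T+1$ distinct faces of $\C$, so $T+1 \leq \lc(\C)$ and hence $|I| = T \leq \lc(\C)-1$. There is no real obstacle here; the only points to watch are (i) establishing both inclusions in $\F_\C(\sum_{i=1}^k x^{(i)}) = \F_T$, which both come from the absorbing property, and (ii) the chain bookkeeping, namely that starting with $\F_0 = \{0\}$ uses up one slot in the longest chain and is what produces the ``$-1$'' in the bound on $|I|$.
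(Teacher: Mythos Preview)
Your proof is correct and follows essentially the same greedy construction as the paper: build $I$ one index at a time, obtaining a strictly increasing chain of faces starting from $\{0\}$, and bound $|I|$ by the height of the face poset. The only cosmetic difference is that the paper scans the indices $1,\dots,k$ in order rather than picking any available index at each step, and it is terser about why the terminal face equals $\F_\C\big(\sum_{i=1}^k x^{(i)}\big)$; your more explicit invocation of the absorbing property is a welcome clarification.
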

\begin{proof}
We explicitly construct a set $I$ with $|I| \leq \lc(\C)-1$.  Set $j = 0, I_0 = \emptyset, \F_\C^{(0)} = \{0\}$.  Running sequentially through $i = 1,\dots,k$, if $x^{(i)} \notin \F_\C(I_j)$, then $(a)$ increase $j$ by one, $(b)$ set $I_j = I_{j-1} \cup \{i\}$, and $(c)$ set $\F_\C^{(j)} = \F_\C\left(\sum_{m \in I_j} x^{(m)}\right)$.

The sequence of faces $\F_\C^{(0)}, \dots, \F_\C^{(j)}$ has the property that
$\F_{\C}^{(0)} \subsetneq \cdots \subsetneq \F_{\C}^{(j)} = \allowbreak \F_{\C}\left(\{x^{(1)},\dots,x^{(k)}\}\right)$, and therefore forms a chain of faces of $\C$ of length at most $\lc(\C)$.  As $\F_{\C}\left(\{x^{(1)},\dots,x^{(k)}\}\right) = \F_{\C}\left(\sum_{i=1}^k x^{(i)}\right)$ and as the index set $I_j$ satisfies $|I_j| \leq \lc(\C)-1$, setting $I = I_j$ leads to the desired conclusion.
\end{proof}

We are now in a position to state and prove the main result of this section.

\begin{proposition}
	\label{prop:llct}
	Let $\C$ be a closed, pointed, convex cone that is $(\lc(\C)-1)$-Terracini convex.  Then $\C$ is Terracini convex.
\end{proposition}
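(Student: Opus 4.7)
\medskip
\noindent\textbf{Proof plan.} The plan is to leverage Lemma~\ref{lem:lcf} to collapse an arbitrary $k$-term sum of generators of extreme rays into a subsum of length at most $\lc(\C)-1$, at which point the hypothesis that $\C$ is $(\lc(\C)-1)$-Terracini convex will yield the result. Fix any $k \geq 1$ and any collection $x^{(1)}, \ldots, x^{(k)}$ of generators of extreme rays of $\C$; we aim to show $\cl_\C\!\left(\sum_{i=1}^k x^{(i)}\right) \subseteq \sum_{i=1}^k \cl_\C(x^{(i)})$, the reverse inclusion being automatic as noted in Remark~\ref{rmk:inclusion} applied through orthogonal complements (or directly).

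First I would record a monotonicity observation: if $\C$ is $m$-Terracini convex then $\C$ is $m'$-Terracini convex for every $1 \leq m' \leq m$. This follows by padding any collection of $m'$ generators with duplicates of one of them to obtain $m$ generators; the face containing the sum is unchanged by positive rescaling, so by \eqref{eq:face-equivalence} the convex tangent space at the $m$-sum coincides with that at the $m'$-sum, and the corresponding sum of convex tangent spaces also coincides (repeated summands contribute no new subspace). In particular, $\C$ is $m$-Terracini convex for every $m \leq \lc(\C)-1$. Next I would apply Lemma~\ref{lem:lcf} to produce a subset $I \subseteq \{1,\dots,k\}$ of size at most $\lc(\C)-1$ with $\F_\C\!\left(\sum_{i=1}^k x^{(i)}\right) = \F_\C\!\left(\sum_{i\in I} x^{(i)}\right)$. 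Invoking the equivalence \eqref{eq:face-equivalence}, this face equality upgrades to
\begin{equation*}
\cl_\C\!\left(\sum_{i=1}^k x^{(i)}\right) = \cl_\C\!\left(\sum_{i \in I} x^{(i)}\right).
\end{equation*}
Since the $x^{(i)}$ for $i \in I$ are themselves generators of extreme rays of $\C$ and $|I| \leq \lc(\C)-1$, the hypothesis together with monotonicity gives $\cl_\C\!\left(\sum_{i\in I} x^{(i)}\right) = \sum_{i \in I} \cl_\C(x^{(i)})$, and this last sum is visibly contained in $\sum_{i=1}^k \cl_\C(x^{(i)})$. Chaining these identifications closes the desired inclusion and establishes $k$-Terracini convexity for the arbitrary $k$, hence Terracini convexity.

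There is no serious obstacle: all the work has been pushed into Lemma~\ref{lem:lcf} and the face-to-tangent-space translation \eqref{eq:face-equivalence}. The only mildly subtle point worth flagging in the write-up is the monotonicity step, which relies on the fact that the minimal face, and hence the convex tangent space, of a positive combination of a fixed set of points depends only on the set and not on the coefficients.
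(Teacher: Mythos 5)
Your proof is correct and follows essentially the same route as the paper's: reduce the $k$-term sum to a subsum indexed by $I$ with $|I|\leq\lc(\C)-1$ via Lemma~\ref{lem:lcf}, pass to convex tangent spaces through \eqref{eq:face-equivalence}, apply the hypothesis to the small collection, and chain the inclusions. The only difference is that you explicitly justify the monotonicity step (that $(\lc(\C)-1)$-Terracini convexity implies $|I|$-Terracini convexity) via a padding argument, a point the paper asserts without proof; your justification is valid since the definition does not require the generators to be distinct.
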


\begin{proof}
Let $x^{(1)},\ldots,x^{(k)}$ be a collection of generators of extreme rays of $\C$.  By Lemma~\ref{lem:lcf}, we know that there exists $I\subseteq \{1,\dots,k\}$ with $|I| \leq \lc(\C)-1$ such that $\F_{\C}\left(\sum_{i = 1}^k x^{(i)}\right) = \F_{\C}\left(\sum_{i\in I}x^{(i)}\right)$.  From \eqref{eq:face-equivalence} we have that:
\begin{equation} \label{eq:llct1}
\cl_{\C}\left(\sum_{i=1}^k x^{(i)}\right) = \cl_{\C}\left(\sum_{i\in I}x^{(i)}\right).
\end{equation}
Since $\C$ is $(\lc(\C)-1)$-Terracini convex, it is $|I|$-Terracini convex and therefore
\begin{equation} \label{eq:llct2}
\cl_{\C}\left(\sum_{i\in I}x^{(i)}\right) = \sum_{i\in I} \cl_{\C}\left(x^{(i)}\right).
\end{equation}
Combining~\eqref{eq:llct1} and~\eqref{eq:llct2}, and noting that $\sum_{i=1}^k \cl_{\C}\left(x^{(i)}\right) \subseteq \cl_{\C}\left(\sum_{i=1}^k x^{(k)}\right)$ as well as $\sum_{i\in I} \cl_{\C}\left(x^{(i)}\right) \subseteq \sum_{i=1}^k \cl_{\C}\left(x^{(i)}\right)$, we conclude that $\C$ is $k$-Terracini convex.  Since $k$ was arbitrary, we have shown that $\C$ is Terracini convex.
\end{proof}

As a consequence of this result, we have the following corollary:
\begin{corollary}
Let $\C$ be a closed, pointed, convex cone that is $\textup{dim}(\C)$-Terracini convex.  Then $\C$ is Terracini convex.
\end{corollary}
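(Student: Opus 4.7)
The plan is to reduce the corollary to Proposition~\ref{prop:llct} by showing $\lc(\C) - 1 \leq \dim(\C)$ and then leveraging monotonicity of $k$-Terracini convexity in $k$.

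First I would recall the inequality $\lc(\C) \leq \dim(\C)+1$ already noted in the excerpt: every chain of faces has strictly increasing dimensions and runs from $\{0\}$ up to $\C$, so its length is at most $\dim(\C)+1$. Hence $\lc(\C)-1 \leq \dim(\C)$, and it suffices to upgrade the hypothesis of $\dim(\C)$-Terracini convexity to $(\lc(\C)-1)$-Terracini convexity, at which point Proposition~\ref{prop:llct} delivers the conclusion.

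The remaining step is the (implicit) monotonicity: $k$-Terracini convexity implies $k'$-Terracini convexity for any $1 \leq k' \leq k$. Given generators $x^{(1)},\ldots,x^{(k')}$ of extreme rays of $\C$, I would pad the collection to size $k$ by repeating $x^{(1)}$ an additional $k-k'$ times, obtaining a new collection of $k$ generators of extreme rays. Applying the $k$-Terracini identity to this padded collection yields
\begin{equation*}
\cl_{\C}\!\left(\sum_{i=1}^{k'} x^{(i)} + (k-k')\,x^{(1)}\right) = \sum_{i=1}^{k'} \cl_{\C}(x^{(i)}) + (k-k')\,\cl_{\C}(x^{(1)}) = \sum_{i=1}^{k'} \cl_{\C}(x^{(i)}),
\end{equation*}
since repeated summands contribute nothing new on the right. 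On the left, the sum $\sum_{i=1}^{k'} x^{(i)} + (k-k')\,x^{(1)}$ and the sum $\sum_{i=1}^{k'} x^{(i)}$ generate the same face of $\C$, so by the equivalence~\eqref{eq:face-equivalence} their convex tangent spaces coincide. This establishes the $k'$-Terracini identity for the original collection.

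No step here is a serious obstacle; the only subtlety is making precise the monotonicity of $k$-Terracini convexity in $k$, which the padding-and-face-equivalence argument handles cleanly. Specializing to $k = \dim(\C)$ and $k' = \lc(\C)-1$ and invoking Proposition~\ref{prop:llct} completes the proof.
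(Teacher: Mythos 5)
Your proposal is correct and follows the same route as the paper, whose entire proof is the one-line observation that $\lc(\C) \leq \textup{dim}(\C)+1$ combined with Proposition~\ref{prop:llct}. The only difference is that you explicitly justify the downward monotonicity of $k$-Terracini convexity via the padding-and-face-equivalence argument, a step the paper leaves implicit (both here and inside the proof of Proposition~\ref{prop:llct} itself); your justification is sound.
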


\begin{proof}
This follows from the observation that $\lc(\C) \leq \textup{dim}(\C) + 1$.
\end{proof}

\subsection{Terracini Convexity and the Lattice of Subspaces}
\label{sec:tclattice}
Motivated by the order-theoretic structure underlying the faces of a closed, pointed, convex cone $\C \subset \R^d$, we consider the order-theoretic aspects of the collection of convex tangent spaces associated to $\C$:
\begin{equation*}
\Lin(\C) = \{\cl_\C(x) \;:\; x \in \C\}
\end{equation*}
As $\Lin(\C)$ is a subset of the collection of subspaces in $\R^d$, one may view $\Lin(\C)$ as a poset by inclusion.  However, the collection of all subspaces in $\R^d$ additionally forms a lattice (called the subspace lattice in $\R^d$) with the join of two subspaces given by their sum and the meet given by their intersection.  In this section we relate Terracini convexity of $\C$ to $\Lin(\C)$ inheriting some of the lattice structure of the collection of all subspaces in $\R^d$.

In preparation to present this result, we discuss next a link between the elements of $\Lin(\C)$ and the exposed faces of $\C$.  As noted previously in \eqref{eq:face-equivalence}, the convex tangent space at a point $x \in \C$ depends only on the smallest exposed face of $\C$ containing $x$ so that the elements of $\Lin(\C)$ are in one-to-one correspondence with the exposed faces of $\C$.  The next result describes how one obtains an exposed face of $\C$ given an element of $\Lin(\C)$:

\begin{lemma} \label{lem:face-cl-relation}
Let $\C$ be a closed, pointed, convex cone.  For any $x \in \C$ we have that:
\begin{equation*}
\Fexp_\C(x) = \C \cap \cl_\C(x).
\end{equation*}
\end{lemma}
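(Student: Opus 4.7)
The plan is to characterize both sides via the normal cone and then match them using the duality relation already established in the proof of Proposition~\ref{prop:dual-def}.

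First, I would express $\Fexp_\C(x)$ in terms of $\N_\C(x)$. By definition, every exposed face of $\C$ containing $x$ has the form $\{y\in \C : \ell(y)=0\}$ for some $\ell \in \C^\circ$ with $\ell(x)=0$, i.e.\ for some $\ell \in \N_\C(x)$. Conversely, every such $\ell$ defines an exposed face containing $x$. Since the intersection of any family of exposed faces is again exposed (one can take the sum, or an averaged combination, of the defining functionals), the smallest exposed face containing $x$ is exactly
\begin{equation*}
\Fexp_\C(x) = \bigcap_{\ell \in \N_\C(x)} \{y \in \C : \ell(y)=0\} = \C \cap \{y \in \R^d : \ell(y)=0 \text{ for all } \ell \in \N_\C(x)\}.
\end{equation*}

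Next, I would identify the set on the right with $\cl_\C(x)$. The annihilator $\{y : \ell(y)=0 \text{ for all } \ell \in \N_\C(x)\}$ coincides with $\textup{span}(\N_\C(x))^\perp$, since a vector is annihilated by every element of a set if and only if it is annihilated by every element of its span. In the proof of Proposition~\ref{prop:dual-def} it was already observed that $\cl_\C(x)^\perp = \textup{span}(\N_\C(x))$; taking orthogonal complements on both sides yields $\textup{span}(\N_\C(x))^\perp = \cl_\C(x)$, where I use that $\cl_\C(x)$ is a (closed) subspace.

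Substituting this back gives $\Fexp_\C(x) = \C \cap \cl_\C(x)$, which is the desired identity. The only subtlety is the first step, establishing that the smallest exposed face containing $x$ really is the intersection of all exposed faces through $x$; this is a standard fact about exposed faces but is the one place where care is needed to ensure the intersection remains exposed rather than merely a face. The rest is a direct application of the polarity relation between $\cl_\C(x)$ and $\N_\C(x)$ already in hand.
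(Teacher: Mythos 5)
Your proof is correct, and it reaches the identity by a somewhat different route than the paper. The paper argues by two inclusions: the easy containment $\Fexp_\C(x) \subseteq \C \cap \cl_\C(x)$, and then, for the reverse, it picks a single hyperplane $H$ that exposes $\Fexp_\C(x)$ and observes that $\cl_\C(x) \subseteq H$, so that $\C \cap \cl_\C(x) \subseteq \C \cap H = \Fexp_\C(x)$. You instead compute both sides exactly: you write $\Fexp_\C(x)$ as $\C$ intersected with the common kernel of all $\ell \in \N_\C(x)$, identify that common kernel with $\textup{span}(\N_\C(x))^\perp$, and then invoke the polarity $\cl_\C(x)^\perp = \textup{span}(\N_\C(x))$ already recorded in the proof of Proposition~\ref{prop:dual-def}. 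The two arguments rest on the same duality (the paper's step ``$\cl_\C(x) \subseteq H$'' is exactly the containment $\textup{span}(\N_\C(x))^\perp \subseteq \ker\ell_0$ for the exposing functional $\ell_0 \in \N_\C(x)$), but the bookkeeping differs: the paper only needs the existence of one exposing functional, which is immediate from the definition of an exposed face, whereas your argument additionally needs the fact that the minimal exposed face through $x$ coincides with the intersection of \emph{all} exposed faces through $x$. You correctly flag this as the delicate point; in finite dimensions it is settled by taking $\ell_0$ in the relative interior of $\N_\C(x)$ (or summing a spanning subset of $\N_\C(x)$), since then $\ell_0(y)=0$ for $y \in \C$ forces $\ell(y)=0$ for every $\ell \in \N_\C(x)$. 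With that standard fact supplied, your proof is complete; its payoff is that it makes the lemma a one-line consequence of the normal-cone formalism and delivers both inclusions at once, at the cost of a slightly heavier prerequisite than the paper's more economical hyperplane argument.
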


\begin{proof}
One can check that $\Fexp_\C(x) \subseteq \cl_\C(x)$, and therefore $\Fexp_\C(x) \subseteq \C \cap \cl_\C(x)$.  In the other direction, we begin by observing that any hyperplane supporting $\C$ that contains $\Fexp_\C(x)$ must contain $\cl_\C(x)$.  Consider a hyperplane $H$ supporting $\C$ that exposes $\Fexp_\C(x)$, i.e., $\C \cap H = \Fexp_\C(x)$ (such a hyperplane must exist as $\Fexp_\C(x)$ is an exposed face).  As $\cl_\C(x) \subseteq H$, we have that $\C \cap \cl_\C(x) \subseteq \Fexp_\C(x)$.  This concludes the proof.
\end{proof}

With this result in hand, we are now in a position to state and prove the following proposition:

\begin{proposition}
Let $\C \subset \R^d$ be a closed, pointed, convex cone.  The cone $\C$ is Terracini convex if and only if $\Lin(\C)$ is a join sub-semilattice of the lattice of all subspaces in $\R^d$ (i.e., the poset $\Lin(\C)$ has a join given by the sum of two subspaces).
\end{proposition}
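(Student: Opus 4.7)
Both implications only require the non-trivial inclusion, since $\cl_\C(\sum_i y^{(i)}) \supseteq \sum_i \cl_\C(y^{(i)})$ always holds. Thus for the forward direction I need, given $L_1, L_2 \in \Lin(\C)$, to exhibit an element of $\C$ whose convex tangent space equals $L_1 + L_2$; and for the backward direction, to show that for any generators $x^{(1)},\dots,x^{(k)}$ of extreme rays, $\cl_\C(\sum_i x^{(i)}) \subseteq \sum_i \cl_\C(x^{(i)})$.

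For the forward direction, write $L_i = \cl_\C(z^{(i)})$ for some $z^{(i)} \in \C$. The plan is to reduce to the defining condition of Terracini convexity by decomposing each $z^{(i)}$ into extreme-ray generators of $\C$. Since $\F_\C(z^{(i)})$ is itself a closed pointed convex cone in finite dimensions, Carath\'eodory lets me write $z^{(i)} = \sum_j y^{(i,j)}$ as a finite conic combination of extreme rays of $\F_\C(z^{(i)})$, which are automatically extreme rays of $\C$. Applying $k$-Terracini convexity to each collection $\{y^{(i,j)}\}_j$ gives $\cl_\C(z^{(i)}) = \sum_j \cl_\C(y^{(i,j)})$, and applying it to the combined collection $\{y^{(i,j)}\}_{i,j}$ gives $\cl_\C(z^{(1)} + z^{(2)}) = \sum_{i,j} \cl_\C(y^{(i,j)}) = L_1 + L_2$. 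Hence $L_1 + L_2 \in \Lin(\C)$, and iterating yields closure under arbitrary finite joins.

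For the backward direction, fix generators $x^{(1)},\dots,x^{(k)}$ of extreme rays and set $L = \sum_{i=1}^k \cl_\C(x^{(i)})$. An induction on $k$ using the binary sub-semilattice hypothesis gives $L \in \Lin(\C)$, so there is some $y \in \C$ with $\cl_\C(y) = L$. By Lemma~\ref{lem:face-cl-relation}, $\Fexp_\C(y) = \C \cap L$, which contains each $x^{(i)}$ (since $x^{(i)} \in \cl_\C(x^{(i)}) \subseteq L$ and $x^{(i)} \in \C$), and hence contains $\sum_i x^{(i)}$. Since $\Fexp_\C$ of a point is the smallest exposed face containing it, this forces $\Fexp_\C(\sum_i x^{(i)}) \subseteq \Fexp_\C(y)$.

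The one step going beyond results already stated---and the main (minor) obstacle---is upgrading this face containment to the convex tangent space containment $\cl_\C(\sum_i x^{(i)}) \subseteq \cl_\C(y) = L$, since equation~\eqref{eq:face-equivalence} only records the equivalence of equalities. I would do this via the identity $\cl_\C(x)^\perp = \textup{span}(\N_\C(x))$ established in the proof of Proposition~\ref{prop:dual-def}: a linear functional in $\C^\circ$ that vanishes at a point $x$ in fact vanishes on all of $\Fexp_\C(x)$, so a larger exposed face admits a smaller normal cone, hence a larger span of the normal cone in perp, and therefore a larger convex tangent space. This monotonicity of $\cl_\C$ under the face order completes the argument.
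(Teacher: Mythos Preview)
Your proof is correct and follows essentially the same approach as the paper's. The forward direction is identical (decompose into extreme rays and apply Terracini convexity), and the backward direction has the same structure; the only minor difference is in the final step, where the paper establishes the reverse containment $\Fexp_\C(y) \subseteq \Fexp_\C(\sum_i x^{(i)})$ (via $\sum_i \cl_\C(x^{(i)}) \subseteq \cl_\C(\sum_i x^{(i)})$ and Lemma~\ref{lem:face-cl-relation}) to obtain equality of exposed faces and then invokes~\eqref{eq:face-equivalence}, whereas you go directly via the monotonicity of $\cl_\C$ under face inclusion---both are valid and equivalent finishes.
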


\begin{proof}
	Suppose first that $\C$ is Terracini convex.  Consider any pair $\cl_\C(x), \cl_\C(y) \in \Lin(\C)$ corresponding to $x,y \in \C$, and let $x = \sum_i x^{(i)}$ and $y = \sum_j y^{(j)}$ be decompositions in terms of generators of extreme rays of $\C$.  As $\C$ is Terracini convex, we have that:
\begin{align*}
	\cl_{\C}\left(x\right) + \cl_\C\left(y\right)& = \sum_i \cl_{\C}\left(x^{(i)}\right) + \sum_j \cl_\C\left(y^{(j)}\right)\\& = \cl_{\C}\left(\sum_i x^{(i)} + \sum_j y^{(j)}\right) = \cl_\C(x + y).
\end{align*}
Since $\cl_\C(x + y) \in \Lin(\C)$, the poset $\Lin(\C)$ is a join sub-semilattice of the lattice of all subspaces in $\R^d$.

In the other direction, suppose that the poset $\Lin(\C)$ is a join sub-semilattice of the lattice of all subspaces in $\R^d$.  Consider any collection $x^{(1)},\dots,x^{(k)} \in \C$ of generators of extreme rays of $\C$.  As the join is given by subspace sum, we have that $\sum_{i=1}^k \cl_\C\left(x^{(i)}\right) \in \Lin(\C)$, which implies that $\sum_{i=1}^k \cl_\C\left(x^{(i)}\right)$ is the convex tangent space at some point $y \in \C$. Then, from Lemma~\ref{lem:face-cl-relation} we see that $\C \cap \sum_{i=1}^k \cl_\C\left(x^{(i)}\right) = \Fexp_\C(y)$, and in particular, $\sum_{i=1}^k x^{(i)} \in \Fexp_\C(y)$ as each $x^{(i)} \in \cl_\C\left(x^{(i)}\right)$.  We also have that $\C \cap \cl_\C\left(\sum_{i=1}^k x^{(i)}\right) = \Fexp_\C\left(\sum_{i=1}^k x^{(i)}\right)$.  As $\sum_{i=1}^k \cl_\C\left(x^{(i)}\right) \subseteq \cl_\C\left(\sum_{i=1}^k x^{(i)}\right)$, we conclude that $\Fexp_\C(y) \subseteq \Fexp_\C\left(\sum_{i=1}^k x^{(i)}\right)$, which in turn implies that $\Fexp_\C(y) = \Fexp_\C\left(\sum_{i=1}^k x^{(i)}\right)$ because $\sum_{i=1}^k x^{(i)} \in \Fexp_\C(y)$.  Appealing to \eqref{eq:face-equivalence}, we can then conclude that $\sum_{i=1}^k \cl_\C\left(x^{(i)}\right) = \cl_\C\left(\sum_{i=1}^k x^{(i)}\right)$.
\end{proof}

Therefore, Terracini convexity of a cone $\C$ is linked to the poset $\Lin(\C)$ inheriting the join structure of the lattice of subspaces.  In general, $\Lin(\C)$ does not inherit the meet structure of the lattice of subspaces as the intersection of the convex tangent spaces corresponding to two exposed faces does not usually yield a convex tangent space corresponding to an exposed face of $\C$ (the positive semidefinite cone provides a counterexample); indeed, the preceding proposition makes no assumptions on the existence of a meet operation.

\section{Neighborliness and Terracini Convexity}
\label{sec:neighborly}
Terracini convexity is one approach to extend neighborliness from polyhedral cones to non-polyhedral convex cones.  As discussed in the introduction, there is already a previous notion of neighborliness available in the non-polyhedral case due to Kalai and Wigderson~\cite{kalai2008neighborly}.  In this section we investigate the relationship between these two concepts, and in particular we show that $k$-neighborly convex cones (formally defined in Section~\ref{sec:kncc}) are $k$-Terracini convex subject to mild non-degeneracy conditions.  Throughout this section we view $\RR^m$ as being equipped with an inner product (which varies based on context and is specified clearly in each case), and we define an associated set $\mathcal{S}^{m-1} \subset \RR^m$ of unit-norm elements induced by the inner product.  Doing so allows us to work with a distinguished set $\textup{ext}(\mathcal{K})\cap \mathcal{S}^{m-1}$ of normalized extreme rays of a closed, pointed, convex cone $\mathcal{K}\subseteq \RR^{m}$. 

\subsection{$k$-Neighborly Convex Cones}
\label{sec:kncc}
In~\cite{kalai2008neighborly} Kalai and Wigderson extend the notion of a neighborly polytope to define a $k$-neighborly embedded smooth manifold.  This concept serves as the point of departure for a definition of a $k$-neighborly convex cone that is expressed in convex-geometric terms with no reference to an underlying embedded manifold.
\begin{definition}
	Let $\mathcal{M}$ be a smooth manifold and let $\phi: \mathcal{M}\rightarrow \RR^m$ be an embedding of $\mathcal{M}$ in $\RR^m$.
	The image $\phi(\mathcal{M})$ is a \emph{$k$-neighborly embedded manifold} if for any collection 
	$x^{(1)},x^{(2)},\ldots,x^{(k)}$ of elements of $\phi(\mathcal{M})$, 
	there exists an affine function $\ell:\RR^m\rightarrow \RR$ such that
	$\ell(x^{(i)}) =0$ for $i=1,2,\ldots,k$ and $\ell(x) > 0$ for all $x\in \phi(\mathcal{M})\setminus\{x^{(1)},x^{(2)},\ldots,x^{(k)}\}$.
\end{definition}
This definition is a slight reformulation of that of Kalai and Wigderson and it is stated in a manner that is more convenient for our presentation.  The neighborliness of $\phi(\mathcal{M})$ clearly only depends on the convex hull of $\phi(\mathcal{M})$, which suggests the following notion of a $k$-neighborly convex cone.
\begin{definition}
 	A closed, pointed, convex cone $\mathcal{K}\subseteq \RR^{m}$ is \emph{$k$-neighborly} if for 
	every collection $x^{(1)},x^{(2)},\ldots,x^{(k)}$
	of normalized extreme rays of $\mathcal{K}$, 
 	there exists a linear functional $\ell:\RR^{m}\rightarrow \RR$
	such that $\ell(x^{(i)})=0$ for $i=1,2,\ldots,k$
 	and $\ell(x) > 0$ for all
	$x\in \textup{ext}(\mathcal{K})\cap \mathcal{S}^{m-1}\setminus \{x^{(1)},x^{(2)},\ldots,x^{(k)}\}$.
\end{definition}
It is straightforward to check that if an embedded smooth manifold $\phi(\mathcal{M})\subseteq \RR^m$ is $k$-neighborly, then the cone over $\phi(\mathcal{M})$, i.e., $\textup{cone}(\{1\}\times \phi(\mathcal{M}))\subseteq \RR^{m+1}$, is a $k$-neighborly convex cone.  A basic observation about $k$-neighborly convex cones is that all of their sufficiently low-dimensional faces are linearly isomorphic to a nonnegative orthant.

\begin{proposition}
\label{prop:nb-face}
Consider a closed, pointed, convex cone $\mathcal{K} \subseteq \R^m$ that is $k$-neighborly, and suppose $\mathcal{F}$ is a face of $\mathcal{K}$ of dimension $d \leq k$.  Then $\mathcal{F}$ is linearly isomorphic to $\RR_+^d$.
\end{proposition}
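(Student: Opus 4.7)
The plan is to show that $\mathcal{F}$ is simplicial, that is, the conic hull of exactly $d$ linearly independent vectors; such a cone is linearly isomorphic to $\R^d_+$. First I would note that $\mathcal{F}$, being a face of the pointed closed convex cone $\mathcal{K}$, is itself a pointed closed convex cone of dimension $d$, and so equals the conic hull of its extreme rays. Moreover, each extreme ray of $\mathcal{F}$ is also an extreme ray of $\mathcal{K}$ (a standard consequence of the definition of a face). Since the extreme rays of $\mathcal{F}$ span $\textup{span}(\mathcal{F})$, which is $d$-dimensional, I can select $d$ such extreme rays with unit-norm generators $x^{(1)},\dots,x^{(d)} \in \mathcal{S}^{m-1}$ that are linearly independent.

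Next I would invoke $k$-neighborliness of $\mathcal{K}$ applied to $\mathcal{I} = \{x^{(1)},\dots,x^{(d)}\}$, which is permissible since $|\mathcal{I}| = d \leq k$. This yields a linear functional $\ell:\R^m \to \R$ satisfying $\ell(x^{(i)}) = 0$ for each $i$ and $\ell(y) > 0$ for every $y \in \textup{ext}(\mathcal{K}) \cap \mathcal{S}^{m-1} \setminus \mathcal{I}$. Being a linear functional vanishing on a basis of $\textup{span}(\mathcal{F})$, $\ell$ must vanish on all of $\textup{span}(\mathcal{F})$.

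The key observation is then that any extreme ray of $\mathcal{F}$ lies in $\textup{span}(\mathcal{F})$ and is also an extreme ray of $\mathcal{K}$: if its unit-norm generator were not among $x^{(1)},\dots,x^{(d)}$, then $\ell$ would evaluate strictly positively on it, contradicting that $\ell$ vanishes throughout $\textup{span}(\mathcal{F})$. Hence the extreme rays of $\mathcal{F}$ are precisely the rays through $x^{(1)},\dots,x^{(d)}$, so $\mathcal{F} = \textup{cone}(x^{(1)},\dots,x^{(d)})$ is a simplicial cone on $d$ linearly independent vectors, and therefore linearly isomorphic to $\R^d_+$. I do not anticipate a real obstacle: the argument rests entirely on the fact that the $k$-neighborliness functional, which strictly separates the chosen extreme rays from every other normalized extreme ray of $\mathcal{K}$, automatically vanishes on the full span of the chosen rays, thereby precluding any additional extreme rays within $\mathcal{F}$.
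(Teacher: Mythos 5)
Your proof is correct and follows essentially the same route as the paper: both arguments select $d$ linearly independent normalized extreme rays of $\mathcal{F}$ and apply the $k$-neighborliness functional $\ell_{\mathcal{I}}$ to them. The only (minor) difference is the closing step: the paper forms the exposed face $\mathcal{F}_{\mathcal{I}} = \{x \in \mathcal{K} : \ell_{\mathcal{I}}(x) = 0\}$, shows it is simplicial, and identifies it with $\mathcal{F}$ via a common relative-interior point, whereas you observe directly that $\ell_{\mathcal{I}}$ vanishes on all of $\textup{span}(\mathcal{F})$ and hence that $\mathcal{F}$ can have no extreme rays beyond the chosen ones --- a slightly more self-contained way to finish.
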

\begin{proof}
		As $\mathcal{K}$ is a closed, pointed, convex cone, so is $\mathcal{F}$.  Hence, $\mathcal{F}$ is the conic hull of its extreme rays.  Let $x^{(1)},\ldots,x^{(d)}$ be a choice of $d$ linearly independent normalized extreme rays of $\mathcal{F}$ (and hence of $\mathcal{K}$).  Let $\ell$ be a linear functional satisfying $\ell(x^{(i)}) = 0$ for $i=1,2,\ldots,d$ and $\ell(x) > 0$ for all other normalized extreme rays of $\mathcal{K}$, 
	whose existence is guaranteed due to the $k$-neighborliness of $\mathcal{K}$. 
	Let $\tilde{\mathcal{F}} = \{x\in \mathcal{K}\;:\; \ell(x) = 0\}$ be the face of $\mathcal{K}$ exposed by
	$\ell$. Since every extreme ray of $\mathcal{K}$ that belongs to $\tilde{\mathcal{F}}$ is also an extreme ray of $\tilde{\mathcal{F}}$, it follows 
	from the definition of $\ell$ that $x^{(1)},x^{(2)},\ldots,x^{(d)}$ are exactly the normalized 
	extreme rays of $\tilde{\mathcal{F}}$.
	As such, $\tilde{\mathcal{F}}$ is a closed, pointed, convex cone with
exactly $d$ linearly independent extreme rays, and therefore it must be linearly isomorphic to $\RR_+^d$.
	Finally, $\tilde{\mathcal{F}}$ and $\mathcal{F}$
are both faces of $\mathcal{K}$ such that their
	relative interiors have a point in common, so $\tilde{\mathcal{F}} = \mathcal{F}$~\cite[Corollary 18.1.2]{rockafellar2015convex}.
\end{proof}
Proposition~\ref{prop:nb-face} makes it clear that $k$-Terracini convex cones are not necessarily $k$-neighborly.
Indeed, we have seen that the positive semidefinite cone is Terracini convex, and yet its faces are not linearly isomorphic to nonnegative orthants in general.  We describe next an example that serves as a running illustration throughout this section.  This cone was considered by Kalai and Wigderson~\cite{kalai2008neighborly} in the language of neighborly manifolds.

\paragraph{Cone over the Veronese embedding}
The \emph{Veronese embedding} $\phi_{n,2d}:\RR^{n}\rightarrow \RR^{\binom{n+2d-1}{2d}}$ is defined by
the homogeneous moment map $\phi_{n,2d}(z) = (z^{\alpha})_{\alpha\in \mathcal{A}_{n,2d}}$ where
$\mathcal{A}_{n,2d} = \{\alpha\in \mathbb{N}^{n}\;:\; \sum_{i=1}^{n}\alpha_i = 2d\}$ and $z^\alpha:= \prod_{i=1}^{n}z_i^{\alpha_i}$.
We denote the cone over this embedding by
\[ \mathcal{C}_{n,2d} := \textup{cone}\{\phi_{n,2d}(z)\;:\; z\in \RR^n\}.\]
When discussing this example, we let $m=\binom{n+2d-1}{2d}$ and equip $\RR^m$ with the inner product\footnote{This inner product
is variously referred to as the apolar, Bombieri, Weyl-Bombieri, Fisher, or Calder\'on inner product.}
that satisfies
\[ \langle \phi(y),\phi(z)\rangle_{B} := \langle y,z\rangle^{2d}\quad\textup{for all $y,z\in \RR^n$}\]
where the inner product on the right is the Euclidean inner product on $\RR^n$. The norms associated with these inner products are denoted $\|\cdot\|_{B}$
and $\|\cdot\|$, respectively.
Any linear functional $\ell : \R^m \rightarrow \R$ restricted to the extreme rays of the cone $\mathcal{C}_{n,2d}$ can be interpreted as a
homogeneous polynomial of degree $2d$ in $n$ variables, i.e.,
\[ \ell(\phi_{n,2d}(z)) = \sum_{\alpha\in \mathcal{A}_{n,2d}} \ell_{\alpha}z^{\alpha}.\]
Under this interpretation, the dual cone $-\mathcal{C}_{n,2d}^\circ$ is the cone of (coefficients of) nonnegative homogeneous polynomials of degree $2d$ in $n$ variables.

\begin{example}[{Neighborliness of cones over Veronese embeddings~\cite{kalai2008neighborly}}]
\label{eg:veronese}
	The cone $\mathcal{C}_{n,2d}$ is a $d$-neighborly convex cone. To see this, consider a collection of up to $d$ 
	normalized extreme rays
\[ \{\phi_{n,2d}(z^{(1)}),\ldots,\phi_{n,2d}(z^{(d)})\}\subseteq
\textup{ext}(\mathcal{C}_{n,2d})\cap \mathcal{S}^{m-1}\]
and define the  linear functional
\[ \ell(\phi_{n,2d}(z)) =
\prod_{i=1}^{d}(\|z\|^2\|z^{(i)}\|^2 - \langle z,z^{(i)}\rangle^2).\]
	From the Cauchy-Schwarz inequality, we can see that this is a nonnegative polynomial in $z$. (In fact, it is a sum of squares.)
As such, $\ell$ defines a
linear functional that is nonnegative on the extreme rays of $\mathcal{C}_{n,2d}$, and hence on $\mathcal{C}_{n,2d}$ itself.
	Furthermore, the only normalized extreme rays at which $\ell$ vanishes are $\phi_{n,2d}(z^{(i)})$ for $i=1,2,\ldots,d$.
\end{example}

\subsection{Non-Degeneracy and Regularity of Convex Cones}
Our approach to showing that a $k$-neighborly cone is $k$-Terracini convex is based on the dual characterization of $k$-Terracini convexity from Proposition~\ref{prop:dual-def}.  Specifically, for any collection of normalized extreme rays $x^{(1)},\dots,x^{(k)}$ of a $k$-neighborly cone $\mathcal{K} \subseteq \R^m$, we wish to prove that $\bigcap_{i=1}^{k}\textup{span}\left(\mathcal{N}_{\mathcal{K}}(x^{(i)})\right) \subseteq \textup{span} \left(\bigcap_{i=1}^{k}\mathcal{N}_{\mathcal{K}}(x^{(i)})\right)$.  Our strategy is to identify an $\ell \in -\bigcap_{i=1}^{k}\mathcal{N}_{\mathcal{K}}(x^{(i)})$ such that 
\begin{equation}
	\label{eq:nb-suff}\ell + U \cap \left[\bigcap_{i=1}^{k}\textup{span}\left(\mathcal{N}_{\mathcal{K}}(x^{(i)})\right)\right] \subseteq -\bigcap_{i=1}^{k}\mathcal{N}_{\mathcal{K}}(x^{(i)})
\end{equation}
for an open set $U \subseteq \R^m$ containing the origin.  The linear functional that supports $\mathcal{K}$ at the points $x^{(1)}, \dots, x^{(k)}$, which is available to us from the definition of $k$-neighborliness, serves as a natural candidate for $\ell$.  The key issue with executing this strategy is that we need to control the extent to which any $\Delta \in \bigcap_{i=1}^{k} \textup{span}\left(\mathcal{N}_{\mathcal{K}}(x^{(i)})\right)$ perturbs $\ell$.  In particular, as $\Delta \in \bigcap_{i=1}^{k} \textup{span}\left(\mathcal{N}_{\mathcal{K}}(x^{(i)})\right)$ may be decomposed as $\Delta = \Delta^{(i)}_{+} - \Delta^{(i)}_{-}$ for each $i=1,\dots,k$, (with $\Delta^{(i)}_{+},\Delta^{(i)}_{-}\in -\mathcal{N}_{\mathcal{K}}(x^{(i)})$), we need to bound the amount that the `negative' parts $\Delta^{(i)}_{-}$ perturb $\ell$.  We consider two conditions to address this point.  The first one ensures that $\ell(x)$ grows sufficiently fast around $\{x^{(1)},x^{(2)},\ldots,x^{(k)}\}$.  The second one controls the growth of any linear functional in $-\mathcal{N}_{\mathcal{K}}(x)$ for any normalized extreme ray $x \in \mathcal{K}$.  Under these conditions -- with the second one applied to each $\Delta^{(i)}_{-}$ -- we show that $\ell$ dominates $\Delta^{(i)}_{-}$; consequently, we prove that for each $\Delta \in \bigcap_{i=1}^{k} \textup{span}\left(\mathcal{N}_{\mathcal{K}}(x^{(i)})\right)$ there exists $\gamma \neq 0$ such that $\ell + \gamma \Delta \in -\bigcap_{i=1}^{k}\mathcal{N}_{\mathcal{K}}(x^{(i)})$. The first condition is a requirement on $k$-neighborly cones and takes the form of a quadratic growth criterion, while the second one is a regularity property applicable to arbitrary closed, pointed, convex cones.  Both of these conditions are mild; for example, we show that the cone over the Veronese embedding satisfies them. (That being said, we are unaware of a method to prove that a $k$-neighborly cone is $k$-Terracini convex without these two conditions.)  We precisely describe the conditions next, and we prove in Section~\ref{sec:tcnc} that $k$-neighborly cones satisfying these conditions are $k$-Terracini convex.

\subsubsection{Non-Degenerate Neighborliness}
\label{sec:non-deg}

We present a non-degenerate extension of the notion $k$-neighborliness in which the linear functional exposing a subset of $k$ extreme rays satisfies an additional growth condition when restricted to nearby extreme rays.

\begin{definition}
	\label{def:nond}
	A closed, pointed, convex cone $\mathcal{K}\subseteq \RR^m$ is \emph{non-degenerate $k$-neighborly} if
	for every collection $x^{(1)},x^{(2)},\ldots,x^{(k)}$ of normalized extreme rays of $\mathcal{K}$, 
	there exist $\epsilon>0$, $\mu > 0$, and a linear functional $\ell : \R^m \rightarrow \R$,
	such that $\ell(x^{(i)})=0$ for $i=1,2,\ldots,k$, $\ell(x) > 0$
	for all $x\in (\textup{ext}(\mathcal{K})\cap \mathcal{S}^{m-1})\setminus \{x^{(1)},\ldots,x^{(k)}\}$,
	and
	\begin{equation}
	\label{eq:nond}
		\ell(x) \geq \mu\, \min_{i=1,2,\ldots,k}\|x-x^{(i)}\|^2\quad
	\textup{for all $x\in
		(\textup{ext}(\mathcal{K})\cap \mathcal{S}^{m-1}) \cap (\cup_{i=1}^{k}\mathcal{B}(x^{(i)},\epsilon))$}.
	\end{equation}
\end{definition}
The quadratic growth condition~\eqref{eq:nond} is a mild restriction, and it is
satisfied by the examples of $k$-neighborly convex cones we consider in this section.
\begin{example}[$k$-neighborly polyhedral cones are non-degenerate $k$-neighborly]
\label{eg:poly-nond}
	If $\mathcal{K} \subseteq \R^m$ is a $k$-neigborly polyhedral cone, then for any 
	collection $x^{(1)},x^{(2)},\ldots,x^{(k)}$ of normalized extreme rays
	there is a linear functional $\ell$ such that $\ell(x^{(i)}) = 0$ for $i=1,2,\ldots,k$ and
	$\ell(x) > 0$ for all other normalized extreme rays of $\mathcal{K}$. 
	As the set of normalized extreme rays is finite, one can choose $\epsilon$ smaller than half the minimum distance between normalized extreme rays and obtain that
	\[ (\textup{ext}(\mathcal{K})\cap \mathcal{S}^{m-1}) \cap (\cup_{i=1}^{k} B(x^{(i)},\epsilon)) = \{x^{(1)},x^{(2)},\ldots,x^{(k)}\},\] 
	which implies that~\eqref{eq:nond} is vacuously satisfied for any positive $\mu$.
\end{example}
\begin{example}[Cone $\mathcal{C}_{n,2d}$ over the Veronese embedding is non-degenerate $d$-neighborly]
\label{eg:veronese-nond}
For $y,z \in \R^n$ with unit Euclidean norm so that $\|\phi(y)\|_B = \|\phi(z)\|_B = 1$ (this is the norm associated with the Bombieri inner product on $\RR^m$), we have that
\begin{align*}
	 \tfrac{1}{2}\|\phi_{n,2d}(y)- \phi_{n,2d}(z)\|^2_B &= 1-\langle y,z\rangle^{2d} =
 (1-\langle y,z\rangle^2)(1+\langle y,z\rangle^2 + \cdots + \langle y,z\rangle^{2d-2})
	\\ &\leq d(1-\langle y,z\rangle^2).
\end{align*}
	Here, the inequality follows from the Cauchy-Schwarz inequality and the fact that $y$ and $z$ have unit
	Euclidean norm. 
For unit Euclidean norm $z^{(i)}, ~ i=1,2,\ldots,d$ and unit Euclidean norm $z \in \R^n$, the linear functional $\ell$ from Example~\ref{eg:veronese} satisfies
\begin{equation*}
\ell(\phi_{n,2d}(z)) = \prod_{i=1}^{d} (1-\langle z,z^{(i)} \rangle^2) \geq \prod_{i=1}^{d} \frac{\|\phi_{n,2d}(z)- \phi_{n,2d}(z^{(i)})\|^2_B}{2d}.
\end{equation*}
Choosing $\epsilon = \frac{1}{2}\min_{i\neq j}\|\phi(z^{(i)})-\phi(z^{(j)})\|_B > 0$, whenever
$\phi_{n,2d}(z)\in \bigcup_{i=1}^{k}\mathcal{B}(\phi_{n,2d}(z^{(i)}),\epsilon)$ and $\|z\|^2=1$ we have that
\[ \ell(\phi_{n,2d}(z)) \geq \tfrac{1}{2d} (\tfrac{\epsilon^2}{2d})^{d-1}\,\min_{i}\|\phi_{n,2d}(z) - \phi_{n,2d}(z^{(i)})\|^2_B.\]
	It follows that $\mathcal{C}_{n,2d}$ is non-degenerate $d$-neighborly.
\end{example}
Although the definition of being non-degenerate $k$-neighborly only requires quadratic growth
locally around the set of minimizers, compactness of the sphere
means that local quadratic growth implies global quadratic growth.
\begin{lemma}
\label{lem:nond-local-global}
If a closed, pointed, convex cone $\mathcal{K} \subseteq \R^m$ is non-degenerate $k$-neighborly then for 
	every collection $x^{(1)},x^{(2)},\ldots,x^{(k)}$ of normalized extreme rays of $\mathcal{K}$,
	there exists $\mu_0 > 0$, and a linear functional $\ell$,
	such that $\ell(x^{(i)})=0$ for $i=1,2,\ldots,k$ and
	\[ \ell(x) \geq \mu_0\,\min_{i=1,2,\ldots,k}\|x-x^{(i)}\|^2\quad
	\textup{for all $x\in
		\textup{ext}(\mathcal{K})\cap \mathcal{S}^{m-1}$}.\]
\end{lemma}
\begin{proof}
	Let $x^{(1)},x^{(2)},\ldots,x^{(k)}$ be a collection of normalized extreme rays of $\mathcal{K}$. 
Let $\epsilon$ and $\mu$ be the positive constants, and let
$\ell$ be the linear functional, that exist because $\mathcal{K}$ is non-degenerate $k$-neighborly.
	Let 
	\[ \mathcal{W} = \{x\in \textup{ext}(\mathcal{K})\cap \mathcal{S}^{m-1}\;:\; \min_{i=1,2,\ldots,k}\|x-x^{(i)}\| < \epsilon\}\]
and let $\mathcal{W}^c = \textup{ext}(\mathcal{K})\cap \mathcal{S}^{m-1} \setminus \mathcal{W}$ be its complement
in normalized extreme rays.
By compactness of $\mathcal{W}^c$ and the fact that $\ell(x) > 0$ on $\mathcal{W}^c$,
there exists some $M>0$ such that
	\[ \ell(x) \geq M \geq \tfrac{M}{4}\min_{i=1,2,\ldots,k}\|x-x^{(i)}\|^2\quad
	\textup{for all $x\in \mathcal{W}^c$}\]
where the second inequality holds because $\|x-y\|^2\leq 4$ whenever $x,y\in \mathcal{S}^{m-1}$. Since
	\[\ell(x) \geq \mu\,\min_{i=1,2,\ldots,k} \|x-x^{(i)}\|^2\quad\textup{for all $x\in \mathcal{W}$},\]
	taking $\mu_0 = \min\{\mu, M/4\}$ completes the proof.
\end{proof}

\subsubsection{Regular Cones}
\label{sec:regular}

Our notion of regularity for a closed, pointed, convex cone requires that no linear functional in the dual cone
grows too fast around its minimizer when restricted to extreme rays.
This holds whenever the restriction of a linear functional to the extreme rays is smooth.
\begin{definition}
	\label{def:reg}
A closed, pointed, convex cone $\mathcal{K}\subseteq\RR^m$ is \emph{regular} if for each
$x_0\in \textup{ext}(\mathcal{K})$ and each $\ell\in -\mathcal{N}_{\mathcal{K}}(x_0)$,
there exist $\delta>0$ and $\nu>0$ such that
\begin{equation}
\label{eq:reg}
\ell(x) \leq \nu\|x-x_0\|^2\quad
\textup{for all $x\in (\textup{ext}(\mathcal{K}) \cap \mathcal{S}^{m-1}) \cap \mathcal{B}(x_0,\delta)$}.
\end{equation}
\end{definition}
\begin{example}[Polyhedral cones are regular]
\label{eg:poly-reg}
If $\mathcal{K} \subseteq \R^m$ is a proper polyhedral cone, then the set of normalized extreme rays is finite. Therefore, for sufficiently
small $\delta$, $(\textup{ext}(\mathcal{K}) \cap \mathcal{S}^{m-1}) \cap \mathcal{B}(x_0,\delta) = \{x_0\}$. If
$\ell\in -\mathcal{N}_{\mathcal{K}}(x_0)$, then $\ell(x_0) = 0$ and so~\eqref{eq:reg}
is vacuously satisfied for any $\nu>0$.
\end{example}
\begin{example}[Cone over the Veronese embedding is regular]
\label{eg:veronese-reg}
Suppose that $z_0\in \mathcal{S}^{n-1}$ and $\ell(\phi_{n,2d}(z))$ is nonnegative and vanishes at $z_0$. 
	Consider the nonnegative homogeneous quadratic $\|z\|^2 \|z_0\|^2 - \langle z, z_0\rangle^2$, which vanishes only on the line spanned by $z_0$. Since both $\ell(\phi_{n,2d}(z))$ and its gradient vanish at $z=z_0$, there exists $M > 0$ such that $\ell(\phi_{n,2d}(z)) \leq M (\|z\|^2 \|z_0\|^2 - \langle z, z_0\rangle^2)$ for all $z\in \mathcal{S}^{n-1}$.
Then if $z\in \mathcal{S}^{n-1}$,
\begin{align*}
 \ell(\phi_{n,2d}(z)) \leq  M(1-\langle z,z_0\rangle^2) \leq M(1-\langle z,z_0\rangle^{2d})
= \tfrac{M}{2}\|\phi_{n,2d}(z) - \phi_{n,2d}(z_0)\|^2_{B}.
\end{align*}
Since $z_0$ was arbitrary, it follows that $\mathcal{C}_{n,2d}$ is regular.
\end{example}
Although the definition of a cone being regular only bounds the growth of a linear functional on normalized extreme rays
locally around its minimizer, such a local bound can be extended to a global bound.
\begin{lemma}
\label{lem:reg-local-global}
If a closed, pointed, convex cone $\mathcal{K} \subseteq \R^m$ is regular then for each $x_0\in \textup{ext}(\mathcal{K})$ and each
$\ell \in -\mathcal{N}_{\mathcal{K}}(x_0)$ there exists $\nu_0>0$ such that
$\ell(x) \leq \nu_0\|x-x_0\|^2$ for all $x\in \textup{ext}(\mathcal{K})\cap \mathcal{S}^{m-1}$.
\end{lemma}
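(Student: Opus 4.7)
The plan is to mirror the compactness argument used in Lemma~\ref{lem:nond-local-global}. Fix $x_0\in \textup{ext}(\mathcal{K})$ and $\ell\in -\mathcal{N}_{\mathcal{K}}(x_0)$. Regularity of $\mathcal{K}$ provides constants $\delta>0$ and $\nu>0$ such that the local bound $\ell(x)\leq \nu\|x-x_0\|^2$ holds on $(\textup{ext}(\mathcal{K})\cap\mathcal{S}^{m-1})\cap\mathcal{B}(x_0,\delta)$. Our task is to promote this local bound to a global bound over all of $\textup{ext}(\mathcal{K})\cap \mathcal{S}^{m-1}$, by controlling $\ell$ on the remaining ``far'' portion.

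I would partition $\textup{ext}(\mathcal{K})\cap \mathcal{S}^{m-1}$ into the near set $\mathcal{W}=(\textup{ext}(\mathcal{K})\cap \mathcal{S}^{m-1})\cap \mathcal{B}(x_0,\delta)$, on which the local bound already gives what we need, and its complement $\mathcal{W}^c=(\textup{ext}(\mathcal{K})\cap \mathcal{S}^{m-1})\setminus \mathcal{W}$, on which $\|x-x_0\|\geq \delta$. The key observation is that $\ell$, being a continuous linear functional, attains a finite supremum $M:=\sup_{x\in \mathcal{K}\cap \mathcal{S}^{m-1}}\ell(x)$ on the compact set $\mathcal{K}\cap \mathcal{S}^{m-1}$; since $-\ell\in \mathcal{N}_{\mathcal{K}}(x_0)\subseteq \mathcal{K}^\circ$ we have $\ell\geq 0$ on $\mathcal{K}$, hence $M\geq 0$. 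For $x\in \mathcal{W}^c$ we then have
\[ \ell(x) \leq M \leq \tfrac{M}{\delta^2}\|x-x_0\|^2,\]
where the second inequality uses $\|x-x_0\|\geq \delta$. Combining with the local bound on $\mathcal{W}$ and setting $\nu_0=\max\{\nu,\,M/\delta^2\}$ gives the desired global inequality.

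There is no substantive obstacle here; the argument is a direct analogue of Lemma~\ref{lem:nond-local-global}, with the roles of ``lower bound by a positive quantity'' and ``upper bound by a finite quantity'' interchanged. The only mild point worth highlighting is that we apply compactness to $\mathcal{K}\cap \mathcal{S}^{m-1}$ (which is compact because $\mathcal{K}$ is closed) rather than to $\textup{ext}(\mathcal{K})\cap \mathcal{S}^{m-1}$ (which need not be closed in the non-polyhedral setting); passing to the larger compact ambient set is harmless since we only need an upper bound on $\ell$, and $\ell$ is continuous.
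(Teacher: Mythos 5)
Your proof is correct and follows essentially the same route as the paper's: split $\textup{ext}(\mathcal{K})\cap\mathcal{S}^{m-1}$ into the ball $\mathcal{B}(x_0,\delta)$ where regularity gives the quadratic bound, and its complement where a compactness bound on $\ell$ combined with $\|x-x_0\|\geq\delta$ yields a (cruder) quadratic bound, then take the max of the two constants. The only cosmetic difference is that the paper bounds $\ell(x)=\ell(x-x_0)\leq L\|x-x_0\|$ with $L=\max_{x\in\mathcal{S}^{m-1}}\ell(x)$ before squaring (obtaining $L/\delta$), whereas you bound $\ell(x)$ directly by its maximum $M$ on $\mathcal{K}\cap\mathcal{S}^{m-1}$ (obtaining $M/\delta^2$); both are valid.
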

\begin{proof}
If $x_0\in \textup{ext}(\mathcal{K})$, the cone $\mathcal{K}$ is regular, and $\ell \in -\mathcal{N}_{\mathcal{K}}(x_0)$, then there exist $\delta>0$ and $\nu \geq 0$ such that $x\in \textup{ext}(\mathcal{K}) \cap \mathcal{S}^{m-1}$ and $\|x-x_0\|< \delta$ implies $\ell(x) \leq \nu\|x-x_0\|^2$.
If, on the other hand, $\frac{\|x_0-x\|}{\delta}\geq 1$  and $L = \max_{x\in \mathcal{S}^{m-1}}\ell(x)$ then
\[ \ell(x) = \ell(x-x_0) \leq L\delta\left(\frac{\|x-x_0\|}{\delta}\right) \leq L\delta\left(\frac{\|x-x_0\|}{\delta}\right)^2 = \tfrac{L}{\delta}\|x-x_0\|^2.\]
Choosing $\nu_0 = \max\{\nu,L/\delta\}$ completes the proof.
\end{proof}

\subsection{Terracini Convexity of Neighborly Cones}
\label{sec:tcnc}
We are now in a position to state and prove the main result of this section.
\begin{theorem}
\label{thm:neighborly}
If a closed, pointed, convex cone 
	is non-degenerate $k$-neighborly and regular, then 
	it is $k$-Terracini.
\end{theorem}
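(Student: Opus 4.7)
The plan is to verify the sufficient condition for $k$-Terracini convexity from Proposition~\ref{prop:tc-ball}. Fix a collection $\mathcal{I} = \{x^{(1)},\ldots,x^{(k)}\} \subseteq \textup{ext}(\mathcal{K}) \cap \mathcal{S}^{m-1}$, and take as the distinguished element of $\bigcap_{i=1}^{k}\mathcal{N}_{\mathcal{K}}(x^{(i)})$ the functional $\tilde{\ell}_{\mathcal{I}} := -\ell_{\mathcal{I}}$, where $\ell_{\mathcal{I}}$ is supplied by non-degenerate $k$-neighborliness (Definition~\ref{def:nond}).  Since $\ell_{\mathcal{I}}$ is nonnegative on $\mathcal{K}$ and vanishes on $\mathcal{I}$, we have $\tilde{\ell}_{\mathcal{I}} \in \bigcap_{i=1}^k \mathcal{N}_{\mathcal{K}}(x^{(i)})$.

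Now fix any $\ell \in \bigcap_{i=1}^{k} \textup{span}(\mathcal{N}_{\mathcal{K}}(x^{(i)}))$; note that $\ell(x^{(i)})=0$ for each $i$. The task is to produce $\gamma_\ell > 0$ such that $\tilde{\ell}_{\mathcal{I}} + \gamma_\ell \ell \in \bigcap_i \mathcal{N}_{\mathcal{K}}(x^{(i)})$, which (using that the left-hand side already vanishes at every $x^{(i)}$) reduces to proving that $\ell_{\mathcal{I}} - \gamma_\ell \ell \geq 0$ on $\mathcal{K}$.  By positive homogeneity and the fact that every element of $\mathcal{K}$ lies in the closed conic hull of its extreme rays, it is enough to verify this inequality on $\textup{ext}(\mathcal{K}) \cap \mathcal{S}^{m-1}$. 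To exploit the quadratic control available, I will decompose $\ell$ in a \emph{point-dependent} way: since $\textup{span}(\mathcal{N}_{\mathcal{K}}(x^{(i)})) = \mathcal{N}_{\mathcal{K}}(x^{(i)}) - \mathcal{N}_{\mathcal{K}}(x^{(i)})$ for each $i$, we can write $\ell = \ell^{(i)}_+ - \ell^{(i)}_-$ with $\ell^{(i)}_\pm \in -\mathcal{N}_{\mathcal{K}}(x^{(i)})$, meaning $\ell^{(i)}_\pm$ is nonnegative on $\mathcal{K}$ and vanishes at $x^{(i)}$.

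For each $x \in \textup{ext}(\mathcal{K}) \cap \mathcal{S}^{m-1}$, let $i^*(x) \in \{1,\ldots,k\}$ index a nearest element of $\mathcal{I}$. Lemma~\ref{lem:nond-local-global} yields a global constant $\mu_0 > 0$ with
\[
\ell_{\mathcal{I}}(x) \;\geq\; \mu_0\, \textup{dist}(x,\mathcal{I})^2 \;=\; \mu_0 \, \|x - x^{(i^*(x))}\|^2,
\]
while Lemma~\ref{lem:reg-local-global}, applied to each $\ell^{(i)}_+ \in -\mathcal{N}_{\mathcal{K}}(x^{(i)})$, yields global constants $\nu_0^{(i)} > 0$ with $\ell^{(i)}_+(x) \leq \nu_0^{(i)}\|x - x^{(i)}\|^2$ on $\textup{ext}(\mathcal{K})\cap \mathcal{S}^{m-1}$. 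Since $\ell^{(i^*(x))}_-$ is nonnegative on $\mathcal{K}$, this gives
\[
\ell(x) \;=\; \ell^{(i^*(x))}_+(x) - \ell^{(i^*(x))}_-(x) \;\leq\; \nu_0^{(i^*(x))}\,\|x - x^{(i^*(x))}\|^2 \;\leq\; \Bigl(\max_i \nu_0^{(i)}\Bigr) \|x - x^{(i^*(x))}\|^2.
\]
Choosing $\gamma_\ell = \mu_0 / \max_i \nu_0^{(i)} > 0$ then forces $\ell_{\mathcal{I}}(x) - \gamma_\ell \ell(x) \geq 0$ pointwise on $\textup{ext}(\mathcal{K}) \cap \mathcal{S}^{m-1}$, which is what we needed.

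The main obstacle I anticipate is reconciling the \emph{$i$-dependent} decomposition $\ell = \ell^{(i)}_+ - \ell^{(i)}_-$ with the requirement of a \emph{single} constant $\gamma_\ell$ that works uniformly in $x$. Finiteness of $|\mathcal{I}|$ lets one take a worst-case $\max_i \nu_0^{(i)}$, and the matching quadratic lower bound on $\ell_{\mathcal{I}}$ at the nearest extreme ray (together with the compactness arguments encoded in Lemmas~\ref{lem:nond-local-global} and~\ref{lem:reg-local-global}) is precisely what makes the point-dependent choice $i^*(x)$ tolerable. A minor technical step is to promote nonnegativity from $\textup{ext}(\mathcal{K}) \cap \mathcal{S}^{m-1}$ to all of $\mathcal{K}$, which is immediate by positive homogeneity together with the fact that a closed pointed convex cone equals the closed conic hull of its extreme rays.
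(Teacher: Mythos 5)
Your proposal is correct and follows essentially the same route as the paper: it verifies the sufficient condition of Proposition~\ref{prop:tc-ball} using the functional $\ell_{\mathcal{I}}$ from non-degenerate neighborliness, the globalized quadratic growth bound of Lemma~\ref{lem:nond-local-global}, the per-point decomposition $\ell = \ell^{(i)}_+ - \ell^{(i)}_-$ controlled by Lemma~\ref{lem:reg-local-global}, and the choice $\gamma_\ell \sim \mu_0/\max_i \nu_0^{(i)}$. The only differences are cosmetic sign conventions (you negate $\ell_{\mathcal{I}}$ to land literally in $\bigcap_i \mathcal{N}_{\mathcal{K}}(x^{(i)})$ and hence bound $\ell$ from above via $\ell^{(i)}_+$, where the paper works with $-\mathcal{N}$ and bounds $\ell$ from below via $\ell^{(i)}_-$), plus your welcome explicitness about the nearest-index $i^*(x)$ and the extension from extreme rays to all of $\mathcal{K}$.
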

\begin{proof}
	Let $x^{(1)},x^{(2)},\ldots,x^{(k)}$
	be a collection of normalized extreme rays of a closed, pointed, 
	non-degenerate $k$-neighborly convex cone $\mathcal{K}$. 
	To establish that $\mathcal{K}$ is $k$-Terracini convex, by
	Remark~\ref{rmk:inclusion} it 
	suffices to show that $\bigcap_{i=1}^{k}\textup{span}\left(\mathcal{N}_{\mathcal{K}}(x^{(i)})\right)\subseteq\textup{span}\left(\bigcap_{i=1}^{k}\mathcal{N}_{\mathcal{K}}(x^{(i)})\right)$. As such, let $\Delta\in \bigcap_{i=1}^{k}\textup{span}\left(\mathcal{N}_{\mathcal{K}}(x^{(i)})\right)$ be arbitrary.

	Let $\ell$ be a linear functional from the definition of non-degenerate $k$-neighborliness of $\mathcal{K}$.
	Since this functional is nonnegative on $\mathcal{K}$ and vanishes on $x^{(i)}$ for $i=1,2,\ldots,k$, 
	it follows that $\ell\in -\bigcap_{i=1}^{k}\mathcal{N}_{\mathcal{K}}(x^{(i)})$. Further, from Lemma~\ref{lem:nond-local-global} there exists $\mu_0 > 0$ such that
\begin{equation}
	\label{eq:growth} \ell(x) \geq \mu_0\,\min_{i=1,2,\ldots,k}\|x-x^{(i)}\|^2\quad
 \textup{for all $x\in \textup{ext}(\mathcal{K}) \cap \mathcal{S}^{m-1}$}.
\end{equation}
	Since $\Delta\in 
	\bigcap_{i=1}^{k}\textup{span}\left(\mathcal{N}_{\mathcal{K}}(x^{(i)})\right)$, 
	for each $i$ we have a decomposition
of $\Delta$ as $\Delta = \Delta^{(i)}_+ - \Delta^{(i)}_-$ where $\Delta^{(i)}_+,\Delta^{(i)}_-\in -\mathcal{N}_{\mathcal{K}}(x^{(i)})$.
As $\mathcal{K}$ is regular, for each $i=1,2,\ldots,k$, there exists $\nu^{(i)}_0> 0$ such that
$\Delta^{(i)}_- \leq \nu^{(i)}_0\|x-x^{(i)}\|^2$ for all $x\in \textup{ext}(\mathcal{K}) \cap \mathcal{S}^{m-1}$ from Lemma~\ref{lem:reg-local-global}.
Setting $\nu_0 = \max_i\{\nu^{(i)}_0\}$ we have that 
\begin{equation}
	\label{eq:neg-growth} \Delta(x) \geq -\nu_0\,\min_{i=1,2,\ldots k}\|x-x^{(i)}\|^2\quad
\textup{for all $x\in \textup{ext}(\mathcal{K}) \cap \mathcal{S}^{m-1}$.}
\end{equation}
	If we choose $0<\gamma < \mu_0/\nu_0$ it follows from~\eqref{eq:growth} and~\eqref{eq:neg-growth} that
\begin{equation}
	(\ell + \gamma \Delta)(x) \geq (\mu_0 - \gamma\nu_0)\,\min_{i=1,2,\ldots,k}\|x-x^{(i)}\|^2\quad 
\textup{for all $x\in \textup{ext}(\mathcal{K}) \cap \mathcal{S}^{m-1}$}.
\end{equation}
	Using the fact that $\Delta(x^{(i)}) = \ell(x^{(i)}) = 0$ for $i=1,2,\ldots,k$, we can conclude that $\ell+\gamma \Delta \in -\bigcap_{i=1}^{k}\mathcal{N}_{\mathcal{K}}(x^{(i)})$.
	Since $\gamma \neq 0$, it follows that $\Delta\in \textup{span}\left(\bigcap_{i=1}^{k}\mathcal{N}_{\mathcal{K}}(x^{(i)})\right)$, and so $\mathcal{K}$ is $k$-Terracini convex.
\end{proof}

	This theorem yields two immediate corollaries based on the examples in Sections~\ref{sec:non-deg} and~\ref{sec:regular}.
\begin{corollary}\label{cor:polyneighborly}
A pointed $k$-neighborly polyhedral cone is $k$-Terracini convex.
\end{corollary}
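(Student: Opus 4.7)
The plan is to deduce this corollary as a direct consequence of Theorem~\ref{thm:neighborly}, which asserts that any closed, pointed, convex cone that is both non-degenerate $k$-neighborly and regular must be $k$-Terracini convex. So I only need to verify that a pointed $k$-neighborly polyhedral cone satisfies both hypotheses.

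For non-degenerate $k$-neighborliness, I would appeal to Example~\ref{eg:poly-nond}: given $\mathcal{I} \subseteq \textup{ext}(\mathcal{K}) \cap \mathcal{S}^{m-1}$ with $|\mathcal{I}| \leq k$, the linear functional $\ell_{\mathcal{I}}$ furnished by $k$-neighborliness already vanishes on $\mathcal{I}$ and is strictly positive on the remaining normalized extreme rays. Because a polyhedral cone has only finitely many extreme rays, the set $\textup{ext}(\mathcal{K}) \cap \mathcal{S}^{m-1}$ is finite, so one can choose $\epsilon > 0$ to be any positive number less than half the minimum pairwise distance between distinct normalized extreme rays. This choice ensures that $(\textup{ext}(\mathcal{K}) \cap \mathcal{S}^{m-1}) \cap (\cup_{z \in \mathcal{I}} \mathcal{B}(z,\epsilon)) = \mathcal{I}$, on which $\ell_{\mathcal{I}}$ vanishes identically; the quadratic growth inequality~\eqref{eq:nond} then holds vacuously for any $\mu > 0$.

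For regularity, I would invoke Example~\ref{eg:poly-reg}: for any $x_0 \in \textup{ext}(\mathcal{K})$ and $\ell \in -\mathcal{N}_{\mathcal{K}}(x_0)$, one has $\ell(x_0) = 0$, and again by finiteness of the normalized extreme rays one can choose $\delta$ small enough that $(\textup{ext}(\mathcal{K}) \cap \mathcal{S}^{m-1}) \cap \mathcal{B}(x_0, \delta) = \{x_0\}$, making~\eqref{eq:reg} vacuously true.

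Combining these two observations, Theorem~\ref{thm:neighborly} applies directly and yields that $\mathcal{K}$ is $k$-Terracini convex. Since both checks are essentially vacuous consequences of the finiteness of the extreme ray set, there is no genuine obstacle; the entire argument is a short bookkeeping step chaining together the two examples with the main theorem of the section.
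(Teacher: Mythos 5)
Your proposal is correct and follows exactly the paper's route: it deduces the corollary from Theorem~\ref{thm:neighborly} by verifying non-degenerate $k$-neighborliness and regularity via the finiteness-of-extreme-rays arguments of Examples~\ref{eg:poly-nond} and~\ref{eg:poly-reg}. This matches the paper's proof, which simply cites that theorem and those two examples.
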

\begin{proof}
This follows immediately from Theorem~\ref{thm:neighborly} and Examples~\ref{eg:poly-nond} and~\ref{eg:poly-reg}.
\end{proof}

\begin{corollary}
	\label{cor:tveronese}
The cone $\mathcal{C}_{n,2d}$ over the Veronese embedding is $d$-Terracini convex.
\end{corollary}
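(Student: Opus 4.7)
The plan is to invoke Theorem~\ref{thm:neighborly}, which reduces the claim to verifying two hypotheses about $\mathcal{C}_{n,2d}$: that it is non-degenerate $d$-neighborly and that it is regular. Both of these have essentially already been established in the development leading up to this corollary, so the proof is a direct assembly of earlier pieces.

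First I would point out that Example~\ref{eg:veronese-nond} establishes that $\mathcal{C}_{n,2d}$ is non-degenerate $d$-neighborly by exhibiting the explicit linear functional $\ell_{\mathcal{I}}(\phi_{n,2d}(z)) = \prod_{i=1}^{d}(\|z\|^2 \|z^{(i)}\|^2 - \langle z, z^{(i)}\rangle^2)$ and verifying the required quadratic growth bound \eqref{eq:nond} near $\mathcal{I}$, with $\epsilon$ chosen as half the minimum pairwise distance between the normalized rays in $\mathcal{I}$. Next I would cite Example~\ref{eg:veronese-reg}, which shows that every nonnegative linear functional vanishing at a point $\phi_{n,2d}(z_0)$ is dominated by a constant multiple of $\|z\|^2 \|z_0\|^2 - \langle z, z_0\rangle^2$ on $\mathcal{S}^{n-1}$ (using that both the functional and its gradient vanish at $z_0$), and hence satisfies the local quadratic upper bound \eqref{eq:reg} required for regularity.

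With both hypotheses of Theorem~\ref{thm:neighborly} verified, the theorem immediately yields that $\mathcal{C}_{n,2d}$ is $d$-Terracini convex, which is the desired conclusion.

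There is essentially no obstacle since the substantive work has already been carried out in Examples~\ref{eg:veronese-nond} and~\ref{eg:veronese-reg}; the corollary is genuinely a one-line consequence of Theorem~\ref{thm:neighborly} applied to those two examples.
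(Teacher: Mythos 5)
Your proposal is correct and matches the paper's proof exactly: the corollary is obtained by applying Theorem~\ref{thm:neighborly} together with Example~\ref{eg:veronese-nond} (non-degenerate $d$-neighborliness) and Example~\ref{eg:veronese-reg} (regularity). Nothing further is needed.
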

\begin{proof}
This follows immediately from Theorem~\ref{thm:neighborly} and Examples~\ref{eg:veronese-nond} and~\ref{eg:veronese-reg}.
\end{proof}

	While Corollary~\ref{cor:tveronese} holds for general cones over Veronese embeddings, for the special case of the cone over the moment curve, i.e., the case $n=2$, a stronger conclusion is possible.
\begin{corollary}\label{cor:momentcurveterracini}
The cone $\mathcal{C}_{2,2d}$ over the homogeneous moment curve is Terracini convex, i.e., is $k$-Terracini convex for all $k$.
\end{corollary}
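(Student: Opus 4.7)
The plan is to prove $k$-Terracini convexity directly for every $k$, splitting into cases based on the number of distinct extreme rays spanned by the generators $x^{(1)},\dots,x^{(k)}$.  Writing $x^{(i)}=\phi_{2,2d}(z^{(i)})$ (rescaling does not affect $\cl_{\mathcal{C}_{2,2d}}$), let $r$ denote the number of distinct rays among $\RR_{\geq 0} x^{(i)}$.  If $r\leq d$, then duplicates absorb harmlessly: by \eqref{eq:face-equivalence} the left-hand side of \eqref{eq:kterracini} depends only on $\F_{\mathcal{C}_{2,2d}}\left(\sum_i x^{(i)}\right)$, while the right-hand side is unchanged upon repeating a subspace in a sum.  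The problem reduces to the $r$ distinct representatives, for which Corollary~\ref{cor:tveronese} applies.

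The essential case is $r\geq d+1$; after reindexing, assume $z^{(1)},\dots,z^{(d+1)}$ determine $d+1$ pairwise distinct extreme rays.  I will show that both sides of \eqref{eq:kterracini} equal $\RR^{2d+1}$.  For the left-hand side, set $y:=\sum_{i=1}^k x^{(i)}$.  Every face containing $y$ must contain each $x^{(i)}$ separately (by the defining property of a face), and therefore contains all $d+1$ distinct extreme rays.  Under the Bombieri identification, each proper exposed face of $\mathcal{C}_{2,2d}$ is the zero locus on the Veronese of some nonnegative binary form of degree $2d$, which has at most $d$ distinct real roots (each of even multiplicity).  Hence no proper exposed face of $\mathcal{C}_{2,2d}$ contains $d+1$ distinct rays, forcing $\Fexp_{\mathcal{C}_{2,2d}}(y)=\mathcal{C}_{2,2d}$ and hence $\cl_{\mathcal{C}_{2,2d}}(y)=\RR^{2d+1}$.

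For the right-hand side, it suffices to show that $\sum_{i=1}^{d+1}\cl_{\mathcal{C}_{2,2d}}(\phi_{2,2d}(z^{(i)}))=\RR^{2d+1}$.  Under the Bombieri pairing, $-\N_{\mathcal{C}_{2,2d}}(\phi_{2,2d}(z))$ consists of nonnegative binary forms of degree $2d$ vanishing at $z$; each such form is divisible by $(z_2 y_1 - z_1 y_2)^2$, and the span of these is precisely the codimension-$2$ subspace $\{p : \nabla p(z) = 0\}$.  Consequently,
\[
\left(\sum_{i=1}^{d+1}\cl_{\mathcal{C}_{2,2d}}(\phi_{2,2d}(z^{(i)}))\right)^{\perp}=\bigcap_{i=1}^{d+1}\{p : \nabla p(z^{(i)}) = 0\},
\]
the space of binary forms of degree $2d$ with a double root at each of the $d+1$ distinct points $z^{(i)}$.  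Any such form would carry at least $2(d+1)>2d$ roots counted with multiplicity, hence must vanish identically.  The sum therefore fills $\RR^{2d+1}$, matching the left-hand side, and $k$-Terracini convexity is established.  The main obstacle is this final dimension count, which relies on the univariate fact that a nonzero binary form of degree $2d$ has at most $2d$ roots with multiplicity---precisely the structural feature distinguishing $n=2$ from general $n$ in the Veronese setting and enabling the strengthening from $d$-Terracini to full Terracini convexity.
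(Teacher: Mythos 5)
Your proof is correct, and it follows the same basic strategy as the paper's: pass to the dual via $\cl_{\mathcal{C}}(x)^\perp = \textup{span}(\N_{\mathcal{C}}(x))$ and reduce everything to the fact that a nonzero binary form of degree $2d$ cannot vanish to order two at $d+1$ distinct projective points. The execution differs in two ways worth noting. First, the paper handles $k\geq d+1$ distinct rays by invoking the already-established $d$-Terracini convexity of $\mathcal{C}_{2,2d}$ to identify $\bigcap_{i=1}^{d}\textup{span}(\N_{\mathcal{C}_{2,2d}}(x^{(i)}))$ as the one-dimensional span of $\prod_{i=1}^{d}(z_1z_2^{(i)}-z_2z_1^{(i)})^2$, and then kills it with one additional vanishing condition; you instead compute each individual $\textup{span}(\N_{\mathcal{C}_{2,2d}}(\phi_{2,2d}(z)))$ as the codimension-two subspace of forms double-vanishing at $z$ and intersect $d+1$ of them directly, which makes the essential case self-contained (Corollary~\ref{cor:tveronese} is then needed only for your routine reduction when $r\leq d$). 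That identification deserves one more line: you should justify that the \emph{span} of the nonnegative forms divisible by $(z_2y_1-z_1y_2)^2$ is all of $(z_2y_1-z_1y_2)^2\cdot\RR[y]_{2d-2}$, e.g.\ by writing an arbitrary $g$ of degree $2d-2$ as a difference of nonnegative forms ($g = (g+tN)-tN$ with $N=(y_1^2+y_2^2)^{d-1}$ and $t$ large, using compactness of the circle) before multiplying by the square. Second, your separate computation of the left-hand side via the face/root-count argument is correct but redundant: since $\sum_i\cl_{\mathcal{C}}(x^{(i)})\subseteq\cl_{\mathcal{C}}(\sum_i x^{(i)})$ always holds, showing the right-hand side equals $\RR^{2d+1}$ already forces equality, which is exactly how the paper dispenses with that side.
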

\begin{proof}
	Let $x^{(1)},\ldots,x^{(k)}$ generate distinct extreme rays of $\mathcal{C}_{2,2d}$. Then
	there exist points $z^{(1)},\ldots,z^{(k)}\in \RR^2$ such that $\phi_{2,2d}(z^{(i)}) = x^{(i)}$ for $i=1,2,\ldots,k$ and $z^{(i)}_1z^{(j)}_2 - z^{(j)}_1z^{(i)}_2 \neq 0$ whenever $i\neq j$. In other words,
	the $z^{(i)}$ represent distinct elements of the real projective line.
	Since $\mathcal{C}_{2,2d}$ is $d$-Terracini convex, to conclude that $\mathcal{C}_{2,2d}$ is
	Terracini convex
	it suffices to show that if $k\geq d+1$ then
	\[ \bigcap_{i=1}^{k}\textup{span}\left(\mathcal{N}_{\mathcal{C}_{2,2d}}(x^{(i)})\right)
	= \{0\}\subseteq \textup{span}\left(\bigcap_{i=1}^{k}\mathcal{N}_{\mathcal{C}_{2,2d}}(x^{(i)})\right).\]

	Elements $\ell\in \mathcal{N}_{\mathcal{C}_{2,2d}}(x^{(i)})$ are exactly the
	linear functionals with the property
	that $\ell(\phi_{2,2d}(z))$ is a bivariate homogeneous polynomial of degree $2d$ that is
	non-positive and vanishes at $z^{(i)}$. As such
	$\ell\in \bigcap_{i=1}^{d}\mathcal{N}_{\mathcal{C}_{2,2d}}(x^{(i)})$ if and only if
	$\ell(\phi_{2,2d}(z))$ is a non-negative multiple of
	$p(z) = -\prod_{i=1}^{d}(z_1z^{(i)}_2 - z_2z^{(i)}_1)^2$.
	From $d$-Terracini convexity of $\mathcal{C}_{2,2d}$, it follows that
	$\ell\in \bigcap_{i=1}^{d}\textup{span}\left(\mathcal{N}_{\mathcal{C}_{2,2d}}(x^{(i)})\right)$
	if and only if $\ell(\phi_{2,2d}(z))$ is a scalar multiple of $p(z)$.

	Consider any  $\tilde{\ell}\in \bigcap_{i=1}^{k}\textup{span}\left(\mathcal{N}_{\mathcal{C}_{2,2d}}(x^{(i)})\right)$ for $k\geq d+1$ and let $q(z) = \tilde{\ell}(\phi_{2,2d}(z))$. Then $q(z) = \alpha p(z)$ for
	some scalar $\alpha$ since
	\[\tilde{\ell}\in \bigcap_{i=1}^{d}\textup{span}\left(\mathcal{N}_{\mathcal{C}_{2,2d}}(x^{(i)})\right)\subseteq\bigcap_{i=1}^{k}\textup{span}\left(\mathcal{N}_{\mathcal{C}_{2,2d}}(x^{(i)})\right).\] 
	Furthermore, $q(z^{(d+1)}) = 0$
	since $\tilde{\ell}\in \textup{span}(\mathcal{N}_{\mathcal{C}_{2,2d}}(x^{(d+1)}))$. Since
	$z^{(i)}_1z^{(j)}_2 - z^{(j)}_1z^{(i)}_2 \neq 0$ whenever $i\neq j$, this is only possible if $\alpha=0$
	and hence $\tilde{\ell} = 0$. 
\end{proof}
A natural question at this stage is whether cones $\mathcal{C}_{n,2d}$ over Veronese embeddings for $n > 2$ are also Terracini convex, rather than merely being $d$-Terracini convex.  For the case of $n=3$, this question is open, and (to the best of our knowledge) cannot be resolved given the current understanding of the structure of $\mathcal{C}_{3,2d}$.  For the case of $n=4$, the following example shows that $\mathcal{C}_{4,4}$ is not Terracini convex based on Blekherman's study of dimensional differences between faces of nonnegative polynomials and
sums of squares~\cite{blekherman2009dimensional}.

\begin{example}[{\cite[Section 2.2]{blekherman2009dimensional}}]
Consider the cone $\mathcal{C}_{4,4}$, which can be viewed as dual to nonnegative quartic forms in four variables.
Let $S = \{(1,1,0,0), \allowbreak (1,0,1,0), \allowbreak (1,0,0,1), \allowbreak (0,1,1,0), \allowbreak (0,1,0,1), \allowbreak (0,0,1,1), \allowbreak (1,1,1,1)\}$.
Blekherman shows that the face of nonnegative quartic forms in four variables that vanish on $S$ has dimension $6$, i.e.,
\begin{equation}
	\label{eq:qqtd1}
	\textup{dim}\,\textup{span}\left(\bigcap_{z\in S}\mathcal{N}_{\mathcal{C}_{4,4}}(\phi_{4,4}(z))\right) = 6.
\end{equation}
Furthermore, each of the subspaces $\textup{span}(\mathcal{N}_{\mathcal{C}_{4,4}}(\phi_{4,4}(z)))$ for $z\in S$
has codimension $4$ in the $35$-dimensional space of quartic forms in four variables. The intersection (over $z \in S$)
of these subspaces are exactly the forms that double vanish on $S$.
Consequently
\begin{equation}
	\label{eq:qqtd2}
	 \textup{dim}\left(\bigcap_{z \in S}\textup{span}\left(\mathcal{N}_{\mathcal{C}_{4,4}}(\phi_{4,4}(z))\right)\right) \geq 35 - 4|S| = 7.
\end{equation}
	In Blekherman's language, the set $S$ is not $2$-independent.
It follows from~\eqref{eq:qqtd1} and~\eqref{eq:qqtd2} that $\mathcal{C}_{4,4}$ is not $7$-Terracini convex, and hence not
Terracini convex.
\end{example}

\section{Preservation of Terracini Convexity under Linear Images}
\label{sec:inverse}

In this section, we consider the Terracini convexity properties of linear images of Terracini convex cones such as the nonnegative orthant and the positive semidefinite matrices.
We carry out our investigation by analyzing the performance of convex relaxations for nonconvex inverse problems.  Specifically, we consider the problem of finding the componentwise nonnegative vector with the smallest number of nonzero entries (i.e., nonnegative sparse vectors) in an affine space, and that of finding the smallest rank positive semidefinite matrix in an affine space.  Both of these problems arise commonly in many applications and they have been widely studied in the literature.  In Section~\ref{sec:inv-orthant} we consider sparse vector recovery and we reprove a result of Donoho and Tanner that a natural linear programming relaxation succeeds in recovering nonnegative sparse vectors in an affine space if and only if a particular linear image of the nonnegative orthant is $k$-Terracini convex for an appropriate $k$ \cite{donohotanner2005neighborliness}.  Donoho and Tanner's original proof was given in the language of neighborly polytopes.  We provide an alternate proof in Section~\ref{sec:inv-orthant} by appealing to the dual relation Proposition~\ref{prop:dual-def} as it is instructive in our subsequent analysis on recovering low-rank matrices in affine spaces.  In Section~\ref{sec:inv-psd} we prove that the success of a semidefinite programming relaxation in recovering positive semidefinite low-rank matrices implies $k$-Terracini convexity of a particular linear image of the cone of positive semidefinite matrices for a suitable $k$; in the reverse direction, we show that a `robust' analog of $k$-Terracini convexity implies success of the semidefinite relaxation.  The results in Section~\ref{sec:inv-psd} lead to a new family of non-polyhedral Terracini convex cones, which we describe in Section~\ref{sec:inv-new}.  Thus, this section supplies new examples of Terracini convex cones, and our results also highlight the utility of our definition of Terracini convexity in generalizing neighborly polyhedral cones, as the usual notion of neighborliness for non-polyhedral cones is not the right one for characterizing the performance of semidefinite relaxations for low-rank matrix recovery.

\subsection{Linear Images of the Nonnegative Orthant}
\label{sec:inv-orthant}
In applications ranging from feature selection in machine learning to recovering signals and images from a limited number of measurements, a frequently encountered question is that of finding vectors with the smallest number of nonzero entries in a given affine space.  Consider the following model problem:
\begin{equation}
\begin{aligned}
\min_{x \in \R^d} & ~~~ |\mathrm{support}(x)| \\ \mathrm{s.t.} & ~~~ Ax = b, ~ x \geq 0.
\end{aligned} \label{eq:L0}\tag{P0}
\end{equation}
Here $A : \R^d \rightarrow \R^n$ is a linear map, $b \in \R^n$, $x \geq 0$ denotes componentwise nonnegativity of $x$, and $|\mathrm{support}(x)|$ denotes the number of nonzero entries of $x$.  As solving \eqref{eq:L0} is NP-hard in general, the following tractable linear programming relaxation is the method of choice that is employed in most contexts:
\begin{equation}
\begin{aligned}
LP(A, b) = \arg\min_{x \in \R^d} & ~~~ \langle 1, x \rangle \\ \mathrm{s.t.} & ~~~ Ax = b, ~ x \geq 0.
	\end{aligned} \label{eq:L1}\tag{P1}
\end{equation}
In assessing the performance of the relaxation \eqref{eq:L1}, the usual mode of analysis is to suppose that there exists a nonnegative vector $x^\star \in \R^d$ with a small number of nonzeros such that $b = A x^\star$, and to then ask whether $x^\star$ is the unique optimal solution of \eqref{eq:L1}, i.e., whether $LP(A,A x^\star) = \{x^\star\}$.  The main
result of Donoho and Tanner \cite{donohotanner2005neighborliness} relates the success of \eqref{eq:L1} to neighborliness properties of images of the $d$-simplex $\Delta^d = \{x \in \R^d \;:\; x \geq 0, ~ \langle 1, x \rangle = 1\}$ under the map $A$.

In Theorem~\ref{thm:DT-conic}, to follow, we state a conic analog of the result in \cite{donohotanner2005neighborliness}, and we reprove it in two stages.  The proof we give
offers a template for our generalization in Section~\ref{sec:inv-psd} on relating the performance of semidefinite relaxations for low-rank matrix recovery to Terracini convexity of linear images of the cone of positive semidefinite matrices.  Our analysis relies on relating the following three properties; each of these is stated with respect to a positive integer $k$, which will be clear from context.

\begin{itemize}
\item A linear map $A : \R^d \rightarrow \R^n$ satisfies the \emph{exact recovery property} if, for each $x^\star \in \R^d_+$ with $|\mathrm{support}(x^\star)| \leq k$, the unique optimal solution of the linear programming relaxation \eqref{eq:L1} is $LP(A,Ax^\star) = \{x^\star\}$.

\item Consider a linear map $B : \R^d \rightarrow \R^N$.  The cone $B(\R^d_+)$ satisfies the \emph{unique preimage property} if, for each $x^\star \in \R^d_+$ with $|\mathrm{support}(x^\star)| \leq k$, the point $B x^\star$ has a unique preimage in $\R^d_+$.

\item Consider a linear map $B : \R^d \rightarrow \R^N$.  The cone $B(\R^d_+)$ satisfies the \emph{Terracini convexity property} if it is pointed, it has $d$ extreme rays, and it is $k$-Terracini convex.
\end{itemize}

Given these notions we state next the result of Donoho and Tanner in conic form:

\begin{theorem}
	\label{thm:DT-conic}
Consider a linear map $A : \R^d \rightarrow \R^n$ that is surjective and define the linear map $B : \R^d \rightarrow \R^{n+1}$ as $Bx = \begin{pmatrix}Ax \\ \langle 1, x \rangle \end{pmatrix}$.  Suppose that $\mathrm{null}(A) \cap \R^d_{++} \neq \emptyset$.  Fix a positive integer $k < d$.  The map $A$ satisfies the exact recovery property if and only if the cone $B(\R^d_+)$ satisfies the Terracini convexity property.
\end{theorem}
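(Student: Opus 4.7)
The plan is to introduce a \emph{unique preimage property} at level $k$ as an intermediate condition: $x^* \in \R^d_+$ with $|\mathrm{support}(x^*)| \le k$ is the only nonnegative preimage of $Bx^*$ under $B$. I will establish exact recovery $\Leftrightarrow$ unique preimage $\Leftrightarrow$ Terracini convexity property. Two preliminary observations are used throughout: $\ker(B) \cap \R^d_+ = \{0\}$ since $\mathbf{1}'x = 0$ with $x \ge 0$ forces $x = 0$, so $B(\R^d_+)$ is automatically pointed; and for polyhedral cones, $k$-Terracini convexity coincides with $k$-neighborliness by Corollary~\ref{cor:polyneighborly}.

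For exact recovery $\Leftrightarrow$ unique preimage, the forward direction is immediate since any alternative nonnegative preimage of $Bx^*$ is LP-feasible with the same objective value $\mathbf{1}'x^*$ and hence also optimal. For the converse, given an LP-optimizer $\hat x$ with $\mathbf{1}'\hat x \le \mathbf{1}'x^*$, I use the strictly positive null-space vector $u \in \mathrm{null}(A) \cap \R^d_{++}$ to form $\tilde x = \hat x + tu$ with $t = (\mathbf{1}'x^* - \mathbf{1}'\hat x)/\mathbf{1}'u \ge 0$; then $B\tilde x = Bx^*$ and $\tilde x \ge 0$, so unique preimage forces $\tilde x = x^*$. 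Since $x^*$ has at least one zero coordinate (as $k < d$) while $\tilde x$ is strictly positive whenever $t > 0$, this yields $t = 0$, i.e., the LP value is not improved, and a second application of unique preimage gives $\hat x = x^*$.

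For unique preimage $\Leftrightarrow$ Terracini convexity property of the polyhedral cone $\mathcal{K} := B(\R^d_+)$, unique preimage at $k = 1$ (which is a consequence of unique preimage at $k$) immediately forces $\mathcal{K}$ to have $d$ distinct extreme rays $\{\R_+ Be_i\}$. For the $k$-neighborliness half of the forward direction, I must show each $B(\R^T_+)$ with $|T| \le k$ is a face of $\mathcal{K}$: given $u = Bx^* \in B(\R^T_+)$ with $\mathrm{support}(x^*) \subseteq T$ and a decomposition $u = B\alpha' + B\beta'$ for $\alpha', \beta' \in \R^d_+$, the unique preimage property applied to $x^*$ forces $\alpha' + \beta' = x^* \in \R^T_+$, so $\alpha', \beta' \in \R^T_+$ and the face property holds.

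For the converse direction, given $x' \ge 0$ with $Bx' = Bx^*$ and $T = \mathrm{support}(x^*)$, split $x' = x'_T + x'_{T^c}$; since $B(\R^T_+)$ is a face of $\mathcal{K}$ by $k$-neighborliness, the face property applied to $Bx' = Bx'_T + Bx'_{T^c}$ places $Bx'_{T^c} \in B(\R^T_+) \cap B(\R^{T^c}_+)$. The crux is that this intersection equals $\{0\}$: any nonzero element would require some $Be_j$ with $j \in T^c$ to appear with positive coefficient in its expression as a nonnegative combination of $\{Be_j\}_{j \in T^c}$ and therefore to lie in the face $B(\R^T_+)$, which by extremality of $Be_j$ forces $Be_j$ to be a positive multiple of some $Be_i$ with $i \in T$, contradicting the distinctness of the $d$ extreme rays. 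Hence $x'_{T^c} \in \ker(B) \cap \R^{T^c}_+ = \{0\}$. Linear independence of $\{Be_i\}_{i \in T}$ then completes the argument and follows by a similar sign-splitting: a nontrivial relation $\sum_{i \in T} c_i Be_i = 0$ yields $\sum_{i \in T^+} c_i^+ Be_i = \sum_{i \in T^-} c_i^- Be_i$, a nonzero element of $B(\R^{T^+}_+) \cap B(\R^{T^-}_+)$, which is again forced to be $\{0\}$ by the same argument. The hard part is this final step: the interplay between $k$-neighborliness applied simultaneously to $T^+$ and $T^-$ and the distinctness of the $d$ extreme rays is what precludes the spurious linear dependences that would otherwise enable multiple nonnegative preimages.
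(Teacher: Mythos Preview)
Your overall structure matches the paper's exactly: both route through the intermediate \emph{unique preimage property}, proving exact recovery $\Leftrightarrow$ unique preimage $\Leftrightarrow$ Terracini convexity property. Your argument for the first equivalence (using a strictly positive $u\in\mathrm{null}(A)$ to lift any improving feasible point to a second nonnegative preimage, then ruling out $t>0$ via the zero coordinate of $x^\star$) is essentially Lemma~\ref{lem:er-up-orthant}, just presented directly rather than by contrapositive.

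The genuine difference is in the second equivalence. The paper (Proposition~\ref{prop:up-tc-orthant}) works \emph{dually}: it reformulates unique preimage as $\mathrm{null}(B)^\perp\cap\mathrm{ri}(\Omega)\neq\emptyset$ for every co-dimension-$\le k$ face $\Omega$ of $\R^d_+$, reformulates $k$-Terracini convexity via Proposition~\ref{prop:dual-def} as $\mathrm{span}(\mathrm{null}(B)^\perp\cap\Omega)=\mathrm{null}(B)^\perp\cap\mathrm{span}(\Omega)$, and then argues the equivalence by a relative-interior perturbation (forward) and an induction on $k$ (reverse). You instead work \emph{primally}: you invoke the polyhedral equivalence of $k$-Terracini convexity and $k$-neighborliness, and then argue directly with the face property of $B(\R^T_+)$ and extreme-ray distinctness. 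Your route is more elementary and avoids the induction, but it is tied to polyhedrality; the paper's dual argument is deliberately written as a template that is reused almost verbatim for the positive-semidefinite case in Proposition~\ref{prop:psd-inv}, which your approach would not supply.

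One small citation slip: Corollary~\ref{cor:polyneighborly} only states $k$-neighborly $\Rightarrow$ $k$-Terracini convex for pointed polyhedral cones. For the converse direction (which you need when you assert ``$B(\R^T_+)$ is a face of $\mathcal{K}$ by $k$-neighborliness'' under the Terracini convexity hypothesis), you should appeal to the equivalence stated in the first example following Definition~\ref{def:kterracini} (which rests on $\cl_\C=\spf_\C$ for polyhedral $\C$ together with the equivalence of~\eqref{eq:spfterr} and $k$-neighborliness noted in the introduction), not to Corollary~\ref{cor:polyneighborly}. With that reference corrected, your argument stands.
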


\begin{remark}
	This result is a conic analog of those in \cite{donohotanner2005neighborliness}.  The assumption
	that $A$ is surjective is to ensure a cleaner argument; if this condition is not satisfied, the proof can be adapted by restricting to the image of $A$.  Finally, the results in \cite{donohotanner2005neighborliness} do not require the condition $\mathrm{null}(A) \cap \R^d_{++} \neq \emptyset$, and they are described in terms of a property termed `outward neighborliness'.  However, the particular restriction on which we focus suffices for our purposes and leads to a simpler exposition.
\end{remark}

This result leads to two types of consequences in \cite{donohotanner2005neighborliness}.  In one direction, Donoho and Tanner leveraged results on constructions of neighborly polytopes to obtain new families of linear maps $A$ for which the linear program \eqref{eq:L1} succeeds in sparse recovery.  Conversely, by building on results in the sparse recovery literature, they constructed new families of neighborly polytopes.

Our proof proceeds in two steps and is based on the following intermediate results.

\begin{lemma}
	\label{lem:er-up-orthant}
Consider a linear map $A : \R^d \rightarrow \R^n$ and define the linear map $B : \R^d \rightarrow \R^{n+1}$ as $Bx = \begin{pmatrix}Ax \\ \langle 1, x \rangle\end{pmatrix}$.  Suppose that $\mathrm{null}(A) \cap \R^d_{++} \neq \emptyset$.  Fix a positive integer $k < d$.  The map $A$ satisfies the exact recovery property if and only if the cone $B(\R^d_+)$ satisfies the unique preimage property.
\end{lemma}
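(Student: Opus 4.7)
The plan is to unpack both properties in terms of the fibers of $B$ over $\R^d_+$ and reduce everything to a two-line comparison of feasible sets, with the interior null-space vector used only to bridge ``$1'y\le 1'x^\star$'' and ``$1'y = 1'x^\star$''. Observe first that $By = Bx^\star$ if and only if $Ay = Ax^\star$ and $\mathbf{1}'y = \mathbf{1}'x^\star$, so the unique preimage property is precisely the statement that $x^\star$ is the only element of $\R^d_+$ with $Ay=Ax^\star$ and $\mathbf{1}'y = \mathbf{1}'x^\star$.

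For the forward direction, assume exact recovery holds and let $x^\star\in\R^d_+$ have $|\mathrm{support}(x^\star)| \le k$. If $y\in\R^d_+$ satisfies $By = Bx^\star$, then $y$ is feasible for \eqref{eq:L1} with $b = Ax^\star$ and achieves the objective value $\mathbf{1}'y = \mathbf{1}'x^\star$ of $x^\star$; uniqueness of the LP optimum forces $y = x^\star$. This direction does not use the assumption on $\mathrm{null}(A)\cap \R^d_{++}$.

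For the reverse direction, assume the unique preimage property and fix $x^\star\in\R^d_+$ with $|\mathrm{support}(x^\star)|\le k$. Let $y\in\R^d_+$ be any other feasible point for \eqref{eq:L1} with $b=Ax^\star$, so $Ay=Ax^\star$ and $\mathbf{1}'y\le \mathbf{1}'x^\star$. Choose $v\in \mathrm{null}(A)\cap \R^d_{++}$ (which exists by assumption) and set $\alpha = (\mathbf{1}'x^\star - \mathbf{1}'y)/(\mathbf{1}'v)\ge 0$. Then $z := y + \alpha v$ lies in $\R^d_+$, satisfies $Az = Ax^\star$ and $\mathbf{1}'z = \mathbf{1}'x^\star$, hence $Bz = Bx^\star$. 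Unique preimage yields $z = x^\star$, i.e., $y = x^\star - \alpha v$. Since $|\mathrm{support}(x^\star)|\le k < d$, some coordinate $i$ has $x^\star_i = 0$, forcing $y_i = -\alpha v_i$; combined with $y_i\ge 0$ and $v_i>0$, this gives $\alpha = 0$ and thus $y = x^\star$. This proves that $x^\star$ is the unique LP optimum, which is the exact recovery property.

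The only real subtlety is the reverse direction, where one must convert the weak inequality $\mathbf{1}'y\le \mathbf{1}'x^\star$ into an equality without assuming anything extra about $y$; the positive null-space vector $v$ handles this, and the sparsity hypothesis $|\mathrm{support}(x^\star)|<d$ is exactly what rules out $\alpha>0$. No other step is delicate.
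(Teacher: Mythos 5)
Your proof is correct and follows essentially the same route as the paper: both directions hinge on padding a candidate LP optimum with a strictly positive null-space vector so that its $\mathbf{1}'$-value matches that of $x^\star$, invoking the unique preimage property, and using $k<d$ to force the padding coefficient to vanish (the paper phrases that last step via the minimal face of $\R^d_+$ containing $x^\star$ rather than via a single zero coordinate, but the content is the same). One small wording fix: a feasible $y$ does not automatically satisfy $\mathbf{1}'y\le \mathbf{1}'x^\star$ as your ``so'' suggests; you should say you are considering an arbitrary feasible point that is at least as good as $x^\star$, which is exactly the class of points you must show collapses to $x^\star$ in order to conclude it is the unique optimum.
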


\begin{proof}
For the case $x^\star = 0$, one can check that $LP(A,0) = \{0\}$ and that the unique preimage of $0 \in \R^{n+1}$ under the map $B$ in $\R^d_+$ is also $\{0\}$.  For nonzero $x^\star$, in considering the exact recovery property and the unique preimage property, we may assume without loss of generality that $\langle 1, x^\star \rangle= 1$.  The reason for this  that $LP(A, \alpha b) = \alpha LP(A, b)$ for any $\alpha > 0$; the unique preimage property is similarly unaffected by such scaling.  With this normalization, the exact recovery property is equivalent to the fact that for any $x^\star \in \R^d_+$ with $|\mathrm{support}(x^\star)| \leq k$, the point $A x^\star$ has a unique preimage in the solid simplex $\Delta^d_0 = \{x \in \R^d \;:\; \langle 1, x \rangle \leq 1, ~ x \geq 0\}$.

Consider the implication that the exact recovery property implies the unique preimage property.
	Assume that the unique preimage property does not hold.
	Then there exists $x^\star \in \R^d_+$ with $|\mathrm{support}(x^\star)| \leq k$ and $\tilde{x} \in \R^d_+$ such that $B \tilde{x} = B x^\star, ~ \tilde{x} \neq x^\star$.  Based on the description of $B$, we can conclude that $\langle 1, \tilde{x} \rangle = 1$ and therefore $\tilde{x} \in \Delta^d$.  This violates the property that $A x^\star$ has a unique preimage in $\Delta^d_0$; hence the exact recovery property does not hold.

Conversely, consider the implication that the unique preimage property implies the exact recovery property.  Assume for the sake of a contradiction that there exists $x^\star \in \R^d_+$ with $|\mathrm{support}(x^\star)| \leq k$ and $\tilde{x} \in \Delta^d_0$ such that $A \tilde{x} = A x^\star, ~ \tilde{x} \neq x^\star$.  As $\mathrm{null}(A) \cap \R^d_{++} \neq \emptyset$, there exists $x^0 \in \Delta^d$ with $|\mathrm{support}(x^0)| = d$ such that $A x^0 = 0$.  The point $x' = (1-\langle 1, \tilde{x}\rangle ) x^0 + \tilde{x}$ has the property that $B x' = B x^\star$.  Consequently, we have that $x^\star = x' = (1-\langle 1, \tilde{x}\rangle) x^0 + \tilde{x}$, which in turn implies that $x^0$ and $\tilde{x}$ belong to the smallest face of $\R^d_+$ containing $x^\star$, i.e., $\mathrm{support}(x^0) \subseteq \mathrm{support}(x^\star)$ and $\mathrm{support}(\tilde{x}) \subseteq \mathrm{support}(x^\star)$.  However, as $|\mathrm{support}(x^0)| = d$ but $|\mathrm{support}(x^\star)| \leq k < d$, we have the desired contradiction.
\end{proof}

Our next result relates the unique preimage property to the Terracini convexity property:

\begin{proposition}
	\label{prop:up-tc-orthant}
Consider a linear map $A : \R^d \rightarrow \R^n$ and define the linear map $B : \R^d \rightarrow \R^{n+1}$ as $Bx = \begin{pmatrix}Ax \\ \langle 1, x\rangle\end{pmatrix}$.  Suppose the map $B$ is surjective.  Fix a positive integer $k$.  The cone $B(\R^d_+)$ satisfies the unique preimage property if and only if it satisfies the Terracini convexity property.
\end{proposition}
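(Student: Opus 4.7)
For the forward direction ($\Rightarrow$), I would first establish that UP forces $\C := B(\R^d_+)$ to be pointed with $\{Be_1, \ldots, Be_d\}$ as its $d$ distinct extreme rays. Pointedness follows because if $y \in \C \cap (-\C)$ is nonzero, writing $y = Bu$ and $-y = Bv$ with $u, v \in \R^d_+$ yields $u + v \in \textup{null}(B) \cap \R^d_+ \setminus \{0\}$, so that $e_1 + \epsilon(u + v)$ is a second preimage of $Be_1$, violating UP at level $1$. Extremality and distinctness of the $\{Be_i\}$ follow by analogous unique-preimage arguments. For $k$-Terracini convexity, I would verify $\cl_{\C}(Be_S) = \sum_{i \in S} \R Be_i = B(L_S)$ for each $S$ with $|S| \leq k$, writing $L_S := \textup{span}\{e_i : i \in S\}$ and $e_S := \sum_{i \in S} e_i$. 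The tangent cone of $\C$ at $Be_S$ equals $B(M_S)$ with $M_S := \{v \in \R^d : v_j \geq 0 \text{ for all } j \notin S\}$, and UP at level $|S|$ is equivalent to $\textup{null}(B) \cap M_S = \{0\}$. This vanishing both ensures $B(M_S)$ is closed and forces its lineality to equal $B(L_S)$: if $y = Bu$ and $-y = Bv$ with $u, v \in M_S$, then $u + v \in \textup{null}(B) \cap M_S = \{0\}$, whence $u \in M_S \cap (-M_S) = L_S$.

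For the reverse direction ($\Leftarrow$), I would first note that pointedness together with $d$ distinct extreme rays already yields UP at level $1$: any $\tilde{x} \in \R^d_+$ with $B\tilde{x} = \alpha Be_i$ satisfies $\tilde{x}_j Be_j \in \R_+ Be_i$ by the face property, forcing $\tilde{x}_j = 0$ for $j \neq i$ by distinctness of the $\{Be_j\}$. The main work lies in using $k$-Terracini convexity to promote this to UP at level $k$, which I would carry out by a minimal-cardinality contradiction. Suppose $S$ with $|S| \leq k$ is of minimum cardinality such that the smallest face of $\C$ containing $Be_S$, call it $F$, strictly contains $\textup{cone}\{Be_i : i \in S\}$. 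Then $F$ has an extreme ray $Be_j$ with $j \notin S$, and the $k$-Terracini identity $\textup{span}(F) = B(L_S)$ yields a representation $Be_j = \sum_{i \in S} c_i Be_i$. Since $B$ has all-ones last row, comparing last coordinates forces $\sum_i c_i = 1$, and the extremality of $Be_j$ rules out all $c_i$ being nonnegative (else $Ae_j$ would be a nontrivial convex combination of other vertices of the polytope $P := A(\Delta^d)$). Partitioning $S$ into the positive and negative coefficient sets $V_+$ and $V_-$ (discarding zeros), the point $y := \sum_{V_+} c_i Be_i = Be_j + \sum_{V_-} |c_i| Be_i$ lies in the relative interior of $\textup{cone}\{Be_i : i \in V_+\}$, which by minimality ($|V_+| < |S|$) is itself a face of $\C$. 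The face property applied to the second expression for $y$ then forces $Be_j$ into this face, contradicting the distinctness of $Be_j$ from the extreme rays $\{Be_i : i \in V_+\}$. Hence $\textup{cone}\{Be_i : i \in S\}$ is a face of $\C$ for every $|S| \leq k$, which is precisely $k$-neighborliness of $P$. A standard minimum-dimension argument then shows any $k$-neighborly polytope has simplicial faces in dimensions up to $k - 1$, so the $\{Ae_i : i \in S\}$ are affinely independent, completing the proof of UP at level $k$.

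The principal obstacle is the minimality argument in the reverse direction: converting the subspace identity supplied by $k$-Terracini convexity into the stronger facial identity $F = \textup{cone}\{Be_i : i \in S\}$. This step relies crucially on the affine structure encoded by the all-ones last row of $B$---the identity $\sum_i c_i = 1$---which simultaneously rules out the all-nonnegative case by appealing to the extremality of $Be_j$ and supplies the Radon-like partition $S = V_+ \sqcup V_-$ that drives the face-property contradiction.
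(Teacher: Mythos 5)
Your argument is correct, but it takes a genuinely different, primal route from the paper's. The paper dualizes both properties: using the surjectivity of $B$ to identify $\N_{B(\R^d_+)}(Bx)$ with $\mathrm{null}(B)^\perp \cap \N_{\R^d_+}(x)$ and invoking Proposition~\ref{prop:dual-def}, it recasts the unique preimage property as ``$\mathrm{null}(B)^\perp \cap \mathrm{ri}(\Omega) \neq \emptyset$'' and the Terracini property as ``$\mathrm{span}(\mathrm{null}(B)^\perp \cap \Omega) = \mathrm{null}(B)^\perp \cap \mathrm{span}(\Omega)$'' for every face $\Omega$ of $\R^d_+$ of codimension at most $k$; the hard direction is then an induction on $k$ in which failure of the relative-interior condition produces an extreme ray $\hat{x}$ of $\R^d_+$ and nonnegative vectors of small disjoint supports with $B(\hat{x}+x^{(-)}) = Bx^{(+)}$, contradicting the inductive hypothesis. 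You instead work entirely in the image cone: your forward direction computes $\cl_{\C}(Be_S) = B(L_S)$ directly from $\mathrm{null}(B)\cap M_S=\{0\}$, and your reverse direction converts the subspace identity $\mathrm{span}(\F_{\C}(Be_S)) = B(L_S)$ into $k$-neighborliness of $A(\Delta^d)$ via a minimal-counterexample plus Radon-partition argument, then uses simpliciality of the low-dimensional faces to recover uniqueness. The two proofs are close cousins---your partition $\sum_{i\in V_+} c_i Be_i = Be_j + \sum_{i\in V_-}|c_i|Be_i$ is the image-side avatar of the paper's $B(\hat{x}+x^{(-)})=Bx^{(+)}$---but yours leans much harder on polyhedrality (every extreme ray of a face of $B(\R^d_+)$ is some $\R_+Be_i$; $\cl_{\C}$ equals the span of the minimal face; Radon partitions), essentially recovering Donoho--Tanner's original neighborliness picture. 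This makes your proof more elementary and self-contained, but it would not transfer to the semidefinite setting, which is precisely why the paper organizes the argument dually (its proof is explicitly a template for Proposition~\ref{prop:psd-inv}, where the italicized polyhedrality step fails). Two small points to tighten: first, the final passage from ``$k$-neighborly with affinely independent vertex sets'' back to the unique preimage property deserves a sentence (a second preimage $\tilde{x}$ of $Bx^\star$ must have support in $S$ because $B\tilde{x}$ lies in the face $\textup{cone}\{Be_i : i\in S\}$, which contains no $Be_j$ with $j\notin S$, and then affine independence together with the $\ones'x$ coordinate forces $\tilde{x}=x^\star$); second, $B(M_S)$ is automatically closed because it is the linear image of a polyhedral cone, so the transversality condition $\mathrm{null}(B)\cap M_S=\{0\}$ is not needed for closedness---only for pinning down the lineality space as $B(L_S)$.
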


\begin{proof}
First, we give a dual reformulation of the unique preimage property.  For each $x^\star \in \R^d_+$ with $|\mathrm{support}(x^\star)| \leq k$, the property that $B x^\star$ has a unique preimage in $\R^d_+$ is equivalent to the transverse intersection condition $\mathrm{null}(B) \cap \cfd_{\R^d_+}(x^\star) = \{0\}$.  The cone $\cfd_{\R^d_+}(x^\star)$ is closed and therefore one can check that this transverse intersection condition is equivalent to $\mathrm{null}(B)^\perp \cap \mathrm{ri}(\N_{\R^d_+}(x^\star)) \neq \emptyset$.  As the nonnegative orthant is a self-dual cone, the normal cone $\N_{\R^d_+}(x^\star)$ is given by a face of $\R^d_+$ of co-dimension at most $k$.  In summary, the unique preimage property states that for any face $\Omega$ of $\R^d_+$ of co-dimension at most $k$, we have that $\mathrm{null}(B)^\perp \cap \mathrm{ri}(\Omega) \neq \emptyset$.

Second, we note that the cone $B(\R^d_+)$ is pointed by construction.  As the linear map $B$ is surjective, elements of the normal cone $\N_{B(\R^d_+)}(Bx)$ for any $x \in \R^d_+$ are in one-to-one correspondence with $\mathrm{null}(B)^\perp \cap \N_{\R^d_+}(x)$.  Consequently, by appealing to Proposition~\ref{prop:dual-def}, the cone $B(\R^d_+)$ being $k$-Terracini convex is equivalent to the condition that for any face $\Omega$ of $\R^d_+$ of co-dimension at most $k$, we have that $\mathrm{span}(\mathrm{null}(B)^\perp \cap \Omega) = \mathrm{null}(B)^\perp \cap \mathrm{span}(\Omega)$.

With these two reformulations of the unique preimage property and the Terracini convexity property in hand, we proceed to establish the desired result.

Consider the implication that the unique preimage property implies the Terracini convexity property.  Based on the unique preimage property applied to elements of $\R^d_+$ with one nonzero entry, we conclude that $B(\R^d_+)$ has $d$ extreme rays.  Let $v \in \mathrm{null}(B)^\perp \cap \mathrm{ri}(\Omega)$.  Letting $U$ be an open set in $\R^d$ containing the origin, we have that $v + \epsilon [U \cap \mathrm{null}(B)^\perp \cap \mathrm{span}(\Omega)] \subset \mathrm{null}(B)^\perp \cap \mathrm{ri}(\Omega)$ for a sufficiently small $\epsilon > 0$.  Consequently, we can conclude that $\mathrm{span}(\mathrm{null}(B)^\perp \cap \Omega) = \mathrm{null}(B)^\perp \cap \mathrm{span}(\Omega)$, which is equivalent to $B(\R^d_+)$ being $k$-Terracini convex.

Next, consider the implication that the Terracini convexity property implies the unique preimage property.  We prove this by induction on $k$.  For the base case $k=1$, as the cone $B(\R^d_+)$ has $d$ extreme rays, we have that the unique preimage property holds for $k=1$.  For $k > 1$, suppose for the sake of a contradiction that $\mathrm{null}(B)^\perp \cap \mathrm{ri}(\Omega) = \emptyset$.  Thus, there exists a face $\hat{\Omega}$ of $\R^d_+$ contained strictly in $\Omega$, i.e., $\hat{\Omega} \subsetneq \Omega$ such that $\mathrm{null}(B)^\perp \cap \hat{\Omega} = \mathrm{null}(B)^\perp \cap \Omega$.  We have the following sequence of containment relations:
\begin{equation*}
\begin{aligned}
\mathrm{null}(B)^\perp \cap \mathrm{span}(\hat{\Omega}) &\subseteq \mathrm{null}(B)^\perp \cap \mathrm{span}(\Omega) \\ &= \mathrm{span}(\mathrm{null}(B)^\perp \cap \Omega) \\ &= \mathrm{span}(\mathrm{null}(B)^\perp \cap \hat{\Omega}) \\ &\subseteq \mathrm{null}(B)^\perp \cap \mathrm{span}(\hat{\Omega}).
\end{aligned}
\end{equation*}
The first relation follows from $\hat{\Omega} \subseteq \Omega$, the second one follows from the Terracini convexity property, the third one follows from $\mathrm{null}(B)^\perp \cap \hat{\Omega} = \mathrm{null}(B)^\perp \cap \Omega$, and the final one follows from the fact that the span of the intersection of two sets is contained inside the intersection of the spans of the sets.  In conclusion, all the containments are satisfied with equality and we have that $\mathrm{null}(B)^\perp \cap \mathrm{span}(\hat{\Omega}) = \mathrm{null}(B)^\perp \cap \mathrm{span}(\Omega)$, or equivalently that:
\begin{equation}
\mathrm{null}(B) + \mathrm{span}(\hat{\Omega})^\perp = \mathrm{null}(B) + \mathrm{span}(\Omega)^\perp. \label{eq:l1contradiction}
\end{equation}
As $\R^d_+$ is a polyhedral cone, we note that $\mathrm{span}(\hat{\Omega})^\perp$ and $\mathrm{span}(\Omega)^\perp$ are themselves spans of faces of $\R^d_+$.  In particular, let $\mathcal{F}, \hat{\mathcal{F}}$ be faces of $\R^d_+$ such that $\mathcal{F} \subsetneq \hat{\mathcal{F}}$, and $\mathrm{span}(\mathcal{F}) = \mathrm{span}(\Omega)^\perp, \mathrm{span}(\hat{\mathcal{F}}) = \mathrm{span}(\hat{\Omega})^\perp$.  The relationship \eqref{eq:l1contradiction} implies that there exists a generator $\hat{x}$ of an extreme ray of $\R^d_+$ in $\hat{\mathcal{F}} \backslash \mathcal{F}$ such that $\hat{x} = (x^{(+)} - x^{(-)}) + v$ for $x^{(+)}, x^{(-)} \in \mathcal{F}$ with disjoint supports and $v \in \mathrm{null}(B)$.  Hence, we have that $B (\hat{x} + x^{(-)}) = B x^{(+)}$.  As $\mathrm{dim}(\mathcal{F}) = k$, the sum of the sizes of the supports of $\hat{x} + x^{(-)}$ and of $x^{(+)}$ is at most $k+1$.  If $x^{(+)} \neq 0$ we have a contradiction due to the inductive hypothesis.  If $x^{(+)} = 0$ we have $\langle 1, (\tilde{x} + x^{(-)})\rangle = 0$, which implies that $\hat{x} + x^{(-)} = 0$ and in turn that $\hat{x} = 0$, also a contradiction.
\end{proof}

Based on these two results, we are now in a position to prove Theorem~\ref{thm:DT-conic}.

\begin{proof}[{Proof of Theorem~\ref{thm:DT-conic}}]
	As $\mathrm{null}(A) \cap \R^d_{++} \neq \emptyset$ and $k < d$ by assumption, we can apply Lemma~\ref{lem:er-up-orthant}.  Specifically, the exact recovery property for $A$ is equivalent to the unique preimage property for $B(\R^d_+)$.

Next, in preparation to apply Proposition~\ref{prop:up-tc-orthant}, we need to verify that the linear map $B$ is surjective.  The surjectivity of $B$ is equivalent to $A$ being surjective and $1 \notin \mathrm{null}(A)^\perp$.  The former condition holds by assumption and the latter condition is in turn equivalent to $\mathrm{null}(A) \nsubseteq \mathrm{span}(1)^\perp$.  The assumption $\mathrm{null}(A) \cap \R^d_{++} \neq \emptyset$ implies that $\mathrm{null}(A) \nsubseteq \mathrm{span}(1)^\perp$.  Thus, we are in a position to apply Proposition~\ref{prop:up-tc-orthant} and obtain that the unique preimage property of the cone $B(\R^d_+)$ is equivalent to $B(\R^d_+)$ satisfying the Terracini convexity property.  This concludes the proof.
\end{proof}

\subsection{Linear Images of the Positive Semidefinite Matrices}
\label{sec:inv-psd}
The development of convex relaxations for obtaining low-rank matrices in affine spaces largely paralleled and built upon the literature on sparse recovery.  Notable examples of such problems include factor analysis and collaborative filtering.  Concretely, given an affine space in $\Sym^d$ of the form $\{X \in \Sym^d \;:\; \mathcal{A}(X) = b\}$ where $\mathcal{A} : \Sym^d \rightarrow \R^n$ is a linear map and $b \in \R^n$, consider the following optimization problem for identifying a positive-semidefinite low-rank matrix in this space:
\begin{equation}
\begin{aligned}
\min_{X \in \Sym^d} & ~~~ \mathrm{rank}(X) \\ \mathrm{s.t.} & ~~~ \mathcal{A}(X) = b, ~ X \succeq 0.
\end{aligned} \label{eq:R0}\tag{R0}
\end{equation}
As with the problem \eqref{eq:L0}, the program \eqref{eq:R0} is also NP-hard to solve in general.
Consequently, the following semidefinite relaxation is widely employed in practice:
\begin{equation}
\begin{aligned}
SDP(\mathcal{A}, b) = \arg\min_{X \in \Sym^d} & ~~~ \mathrm{tr}(X) \\ \mathrm{s.t.} & ~~~ \mathcal{A}(X) = b, ~ X \succeq 0.
\end{aligned} \label{eq:R1}\tag{R1}
\end{equation}
By analogy with
the analysis of the performance of \eqref{eq:L1}, we are interested in obtaining conditions under which the unique optimal solution of \eqref{eq:R1} with $b = \mathcal{A}(X^\star)$ for a low-rank matrix $X^\star \in \Sym^d_+$ is equal to $X^\star$, i.e., whether $SDP(\mathcal{A},\mathcal{A}(X^\star)) = \{X^\star\}$.  Our objective in the remainder of this section is to relate such exact recovery to Terracini convexity of an appropriate linear image of $\Sym^d_+$.

As with the previous subsection, our analysis is organized in terms of three properties:
\begin{itemize}
\item A linear map $\mathcal{A} : \Sym^d \rightarrow \R^n$ satisfies the \emph{exact recovery property} if for any $X^\star \in \Sym^d_+$ with $\mathrm{rank}(X^\star) \leq k$, the unique optimal solution of the semidefinite programming relaxation \eqref{eq:R1} is $SDP(\mathcal{A},\mathcal{A}(X^\star)) = \{X^\star\}$.

\item Consider a linear map $\mathcal{B} : \Sym^d \rightarrow \R^N$.  The cone $\mathcal{B}(\Sym^d_+)$ satisfies the \emph{unique preimage property} if for any $X^\star \in \Sym^d_+$ with $\mathrm{rank}(X^\star) \leq k$, the point $\mathcal{B}(X^\star)$ has a unique preimage in $\Sym^d_+$.

\item Consider a linear map $\mathcal{B} : \Sym^d \rightarrow \R^N$.  The cone $\mathcal{B}(\Sym^d_+)$ satisfies the \emph{Terracini convex property} if it is closed and pointed, its extreme rays are in one-to-one correspondence with those of $\Sym^d_+$, and it is $k$-Terracini.
\end{itemize}

In what follows, let $\fs^d = \{X\in \Sym^d\;:\; \tr(X)=1,\; X \psd 0\}$ be the spectraplex. This plays the same role as the simplex $\Delta^d$ did in Section~\ref{sec:inv-orthant}.
We are now in a position to state the main new result of this section.

\begin{theorem}
	\label{thm:psd-inv}
Consider a linear map $\mathcal{A} : \Sym^d \rightarrow \R^n$ and fix a positive integer $k < d$.
	Then the following two statements hold:
\begin{enumerate}
\item Suppose that $\mathcal{A}$ is surjective and $\mathrm{null}(\mathcal{A}) \cap \Sym^d_{++} \neq \emptyset$.  Consider the linear map $\mathcal{B}: \Sym^d \rightarrow \R^{n+1}$ defined as $\mathcal{B}(X) = \begin{pmatrix}\mathcal{A}(X) \\ \mathrm{tr}(X)\end{pmatrix}$.  If the map $\mathcal{A}$ satisfies the exact recovery property, then the cone $\mathcal{B}(\Sym^d_+)$ satisfies the Terracini convexity property.

	\item  Assume that $n > {d+1 \choose 2} - {d-k+1 \choose 2}$.  Suppose there exists an open set $\mathfrak{S}$ in the space of linear maps from $\Sym^d$ to $\R^n$ with the following properties:
		\begin{itemize}
			\item $\mathcal{A}\in \mathfrak{S}$
			\item For each $\tilde{\mathcal{A}} \in \mathfrak{S}$, the map $\tilde{\mathcal{A}}$ is surjective and satisfies $\mathrm{null}(\tilde{\mathcal{A}}) \cap \Sym^d_{++} \neq \emptyset$.
	\item For each $\tilde{\mathcal{A}} \in \mathfrak{S}$ with associated $\tilde{\mathcal{B}} : \Sym^d \rightarrow \R^{n+1}$ defined as $\tilde{\mathcal{B}}(X) = \begin{pmatrix}\tilde{\mathcal{A}}(X) \\ \mathrm{tr}(X)\end{pmatrix}$, the cone $\tilde{\mathcal{B}}(\Sym^d_+)$ satisfies the Terracini convexity property.
	\end{itemize}
	Then the map $\mathcal{A}$ satisfies the exact recovery property.

\end{enumerate}
\end{theorem}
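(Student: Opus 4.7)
My plan is to follow the two-step template of Section~\ref{sec:inv-orthant}, adapted to the non-polyhedral structure of $\Sym^d_+$, working throughout with the dual characterization of Terracini convexity in Proposition~\ref{prop:dual-def}. The key cone identities I will use are that $\N_{\Sym^d_+}(X^\star) = \{Q \in -\Sym^d_+ : Q X^\star = 0\}$ for $X^\star \in \Sym^d_+$ of rank $r$, its span is $\{Q \in \Sym^d : Q X^\star = 0\}$ of dimension $\binom{d-r+1}{2}$, and its relative interior consists of $Q \preceq 0$ of rank exactly $d-r$ annihilating $X^\star$.

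\textbf{Part 1.} I would first prove the PSD analog of Lemma~\ref{lem:er-up-orthant}: under the hypothesis $\mathrm{null}(\mathcal{A}) \cap \Sym^d_{++} \neq \emptyset$, exact recovery for $\mathcal{A}$ is equivalent to unique preimage for $\mathcal{B}(\Sym^d_+)$. After normalizing $\mathrm{tr}(X^\star) = 1$, exact recovery says $X^\star$ is the only element of $\{X \succeq 0 : \mathcal{A}(X) = \mathcal{A}(X^\star), \ \mathrm{tr}(X) \leq 1\}$. The forward direction is immediate from the trace coordinate of $\mathcal{B}$. For the reverse direction, given a competitor $\tilde X \neq X^\star$, I lift via a positive definite $X^0 \in \mathrm{null}(\mathcal{A})$ with $\mathrm{tr}(X^0) = 1$: the point $X' = (1 - \mathrm{tr}(\tilde X)) X^0 + \tilde X$ satisfies $\mathcal{B}(X') = \mathcal{B}(X^\star)$, so unique preimage forces $X' = X^\star$, which places $X^0$ in the minimal face containing $X^\star$, contradicting $\mathrm{rank}(X^0) = d > k \geq \mathrm{rank}(X^\star)$. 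Next, I would prove unique preimage implies the Terracini convexity property: closedness and pointedness of $\mathcal{B}(\Sym^d_+)$ follow from compactness of $\fs^d$ and the trace coordinate, the extreme ray bijection follows from unique preimage at rank-1 matrices, and $k$-Terracini convexity follows from rephrasing unique preimage as $\mathrm{null}(\mathcal{B})^\perp \cap \mathrm{ri}(\N_{\Sym^d_+}(X^\star)) \neq \emptyset$ (via a transverse intersection argument using self-duality of $\Sym^d_+$), which readily supplies the required span equality in Proposition~\ref{prop:dual-def}.

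\textbf{Part 2 and main obstacle.} The plan is to derive the unique preimage property from the robust Terracini convexity hypothesis and then invoke the Lemma~\ref{lem:er-up-orthant} analog to conclude exact recovery. Suppose for contradiction unique preimage fails, so for some $X^\star$ of rank $r \leq k$ the intersection $\mathrm{null}(\mathcal{B})^\perp \cap \N_{\Sym^d_+}(X^\star)$ is contained in a strictly smaller subface $\N_{\Sym^d_+}(X^\star + Y)$ with $Y \succeq 0$, $Y \neq 0$, and $\mathrm{range}(Y) \perp \mathrm{range}(X^\star)$. Applying Terracini convexity for $\mathcal{A}$ and taking spans yields the identity $\mathrm{null}(\mathcal{B})^\perp \cap \{Q : Q X^\star = 0\} = \mathrm{null}(\mathcal{B})^\perp \cap \{Q : Q(X^\star + Y) = 0\}$, paralleling~\eqref{eq:l1contradiction}. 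The dimension assumption $n > \binom{d+1}{2} - \binom{d-k+1}{2}$ bounds the dimension of $\mathrm{null}(\mathcal{B})^\perp \cap \{Q : Q X^\star = 0\}$ from below, while the openness of $\mathfrak{S}$ permits a perturbation $\tilde{\mathcal{A}}$ for which the corresponding span identity cannot persist at a nearby rank-$\leq k$ matrix, contradicting $\tilde{\mathcal{A}} \in \mathfrak{S}$. The main obstacle is precisely this perturbation step: unlike the polyhedral setting of Proposition~\ref{prop:up-tc-orthant}, where subface spans are themselves faces of the orthant and an inductive reduction in $k$ is straightforward, the continuous facial structure of $\Sym^d_+$ means Terracini convexity at $\mathcal{A}$ alone does not force the relative interior intersection to be nonempty, and the dimension lower bound together with openness of $\mathfrak{S}$ must be combined to supply the missing transversality and identify the perturbed rank-$\leq k$ witness.
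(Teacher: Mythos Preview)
Your overall architecture matches the paper's: a PSD analogue of Lemma~\ref{lem:er-up-orthant} (this is Lemma~\ref{lem:psd-inv}) followed by a PSD analogue of Proposition~\ref{prop:up-tc-orthant} (this is Proposition~\ref{prop:psd-inv}), with the dual reformulation via Proposition~\ref{prop:dual-def}. The reductions you describe for exact recovery $\Leftrightarrow$ unique preimage, and for closedness, pointedness, and the extreme-ray bijection, are exactly what the paper does.

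There is a real gap in your Part~1. You write that $k$-Terracini convexity follows from ``rephrasing unique preimage as $\mathrm{null}(\mathcal{B})^\perp \cap \mathrm{ri}(\N_{\Sym^d_+}(X^\star)) \neq \emptyset$ via a transverse intersection argument.'' But unique preimage at $X^\star$ is only equivalent to $\mathrm{null}(\mathcal{B}) \cap \cfd_{\Sym^d_+}(X^\star) = \{0\}$, and the duality step to $\mathrm{null}(\mathcal{B})^\perp \cap \mathrm{ri}(\N_{\Sym^d_+}(X^\star)) \neq \emptyset$ requires $\mathrm{null}(\mathcal{B}) \cap \overline{\cfd_{\Sym^d_+}(X^\star)} = \{0\}$. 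Unlike the orthant case, $\cfd_{\Sym^d_+}(X^\star)$ is \emph{not closed}, so this is not automatic. The paper closes this gap with a Schur-complement construction: given a nonzero $M = \left[\begin{smallmatrix} P & V'\\ V & Q\end{smallmatrix}\right] \in \mathrm{null}(\mathcal{B}) \cap \overline{\cfd_{\Sym^d_+}(X^\star)}$, one picks $W \succ 0$ with $W+P \succ 0$ and exhibits the rank-$r$ matrix $\left[\begin{smallmatrix} W & -V'\\ -V & VW^{-1}V'\end{smallmatrix}\right]$ as a witness that unique preimage fails elsewhere. In other words, the dual reformulation is not a pointwise equivalence at $X^\star$; it uses unique preimage at \emph{all} rank-$\leq k$ matrices simultaneously.

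For Part~2 you have correctly located the obstacle and the roles of the dimension bound and the openness of $\mathfrak{S}$, but your description of the perturbation (``the span identity cannot persist at a nearby rank-$\leq k$ matrix'') is off: the face $\Omega$ stays fixed and it is the \emph{map} that is perturbed. The paper's construction is concrete: from $\mathrm{null}(\mathcal{B})^\perp \cap \mathrm{ri}(\Omega) = \emptyset$ one extracts a proper subface $\hat\Omega \subsetneq \Omega$ with $\mathrm{null}(\mathcal{B})^\perp \cap \Omega = \mathrm{null}(\mathcal{B})^\perp \cap \hat\Omega$ and $\mathrm{null}(\mathcal{B})^\perp \cap \mathrm{ri}(\hat\Omega) \neq \emptyset$, picks $W \in (\Omega \cap \mathrm{span}(\hat\Omega)^\perp)\setminus\{0\}$, and tilts $\mathrm{null}(\mathcal{A})^\perp$ in the direction $W$ to obtain $\tilde{\mathcal{A}} \in \mathfrak{S}$. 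One then checks directly that $\mathrm{null}(\tilde{\mathcal{B}})^\perp \cap \Omega = \{0\}$ while $\mathrm{null}(\tilde{\mathcal{B}})^\perp \cap \mathrm{span}(\Omega) \neq \{0\}$, contradicting Terracini convexity of $\tilde{\mathcal{B}}(\Sym^d_+)$ at the same face $\Omega$.
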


The proof in the direction from the exact recovery property to the Terracini convexity property largely follows the same sequence of steps as the proof of the analogous direction of Theorem~\ref{thm:DT-conic}, although technical care is required due to the fact that the cone of feasible directions into the cone of positive-semidefinite matrices is not closed.  In the direction from the Terracini convexity property to the exact recovery property, we require a robust analog of the Terracini convexity property.  This condition is in the same spirit as constraint qualification type assumptions that are required in the semidefinite programming literature in order to guarantee strict complementarity \cite{renegar2001book}.

Inspired by the two-stage proof in Section~\ref{sec:inv-orthant}, we begin with the following result that parallels Lemma~\ref{lem:er-up-orthant}:

\begin{lemma}
	\label{lem:psd-inv}
Consider a linear map $\mathcal{A} : \Sym^d \rightarrow \R^n$ and define the linear map $\mathcal{B}: \Sym^d \rightarrow \R^{n+1}$ as $\mathcal{B}(X) = \begin{pmatrix}\mathcal{A}(X) \\ \mathrm{tr}(X)\end{pmatrix}$.  Suppose that $\mathrm{null}(\mathcal{A}) \cap \Sym^d_{++} \neq \emptyset$.  Fix a positive integer $k < d$.  The map $\mathcal{A}$ satisfies the exact recovery property if and only if the cone $\mathcal{B}(\Sym^d_+)$ satisfies the unique preimage property.
\end{lemma}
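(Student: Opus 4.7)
My plan is to mirror the proof of Lemma~\ref{lem:er-up-orthant} at the structural level, with the free spectrahedron $\fs^d$ and its ``solid'' version $\fs^d_{\leq} := \{X \in \Sym^d : X \psd 0,\ \tr(X) \leq 1\}$ playing the roles of $\Delta^d$ and $\Delta^d_0$, and with ``the smallest face of $\Sym^d_+$ containing a given matrix'' playing the role of ``the coordinate face of $\R^d_+$ indexed by a support set.'' By positive homogeneity of the trace and of the rank, I would first normalize to $\tr(X^\star) = 1$; under this normalization the exact recovery property is equivalent to the statement that $\mathcal{A}(X^\star)$ has $X^\star$ as its unique preimage in $\fs^d_{\leq}$.

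For the forward direction (exact recovery implies unique preimage) I would argue contrapositively: any $\tilde X \in \Sym^d_+$ with $\tilde X \neq X^\star$ and $\mathcal{B}(\tilde X) = \mathcal{B}(X^\star)$ has $\tr(\tilde X) = 1$ from the last coordinate of $\mathcal{B}$, hence $\tilde X \in \fs^d \subseteq \fs^d_{\leq}$, directly witnessing the failure of exact recovery. This half is essentially a bookkeeping step.

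The substantive direction is that unique preimage implies exact recovery, and I would again proceed contrapositively. Suppose $\tilde X \in \fs^d_{\leq} \setminus \{X^\star\}$ satisfies $\mathcal{A}(\tilde X) = \mathcal{A}(X^\star)$. Using $\mathrm{null}(\mathcal{A}) \cap \Sym^d_{++} \neq \emptyset$, pick $X^0 \psd 0$ with $\mathrm{rank}(X^0) = d$, $\tr(X^0) = 1$, and $\mathcal{A}(X^0) = 0$, and set
\[
X' \;=\; (1 - \tr(\tilde X))\, X^0 \;+\; \tilde X.
\]
By construction $X' \psd 0$, $\mathcal{A}(X') = \mathcal{A}(X^\star)$, and $\tr(X') = 1$, so $\mathcal{B}(X') = \mathcal{B}(X^\star)$; the unique preimage property then forces $X' = X^\star$. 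If $\tr(\tilde X) = 1$ this collapses to $\tilde X = X^\star$, contradicting the choice of $\tilde X$. Otherwise $1 - \tr(\tilde X) > 0$, and then both $X^0$ and $\tilde X$ lie, as positive semidefinite summands of $X^\star$, in the smallest face of $\Sym^d_+$ containing $X^\star$, which consists of matrices whose range is contained in $\mathrm{range}(X^\star)$. Hence $\mathrm{range}(X^0) \subseteq \mathrm{range}(X^\star)$ forces $d = \mathrm{rank}(X^0) \leq \mathrm{rank}(X^\star) \leq k$, contradicting $k < d$.

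The main obstacle I anticipate is only a mild technical one: unlike in the orthant case, the cone of feasible directions into $\Sym^d_+$ is not closed, so I would keep the argument entirely at the level of the cones themselves rather than their tangent cones, relying on the explicit description of the smallest face of $\Sym^d_+$ containing $X^\star$ as $\{Z \psd 0 : \mathrm{range}(Z) \subseteq \mathrm{range}(X^\star)\}$ in place of the coordinate-support characterization used for the orthant. With that substitution the orthant proof transfers essentially line by line, and the assumption $k < d$ plays the same role as in Lemma~\ref{lem:er-up-orthant}: it leaves room for the full-rank null-space element $X^0$ to produce the contradiction.
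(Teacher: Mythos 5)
Your proposal is correct and follows essentially the same route as the paper's proof: the same normalization to $\tr(X^\star)=1$, the same reduction to unique preimages in the solid free spectrahedron, and the same construction $X' = (1-\tr(\tilde X))X^0 + \tilde X$ with a full-rank null-space element $X^0$ yielding the rank contradiction via the facial structure of $\Sym^d_+$. Your explicit case split on $\tr(\tilde X)=1$ versus $\tr(\tilde X)<1$ is a minor (and welcome) extra care; the non-closedness of the cone of feasible directions that you flag is not actually an issue in this lemma (it only arises in the subsequent proposition).
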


\begin{proof}
	As with the proof of Lemma~\ref{lem:er-up-orthant}, in considering the exact recovery property and the unique recovery property, we assume without loss of generality that $\mathrm{tr}(X^\star) = 1$.  With this normalization, the exact recovery property is equivalent to the fact that for any $X^\star \in \Sym^d_+$ with $\mathrm{rank}(X^\star) \leq k$, the point $\mathcal{A}(X^\star)$ has a unique preimage in the solid spectraplex $\fs^d_0 = \{X \in \Sym^d \;:\; \mathrm{tr}(X) \leq 1, ~ X \succeq 0\}$.

Consider the implication that the exact recovery property implies the unique preimage property.  Assume 
	that the unique preimage property does not hold. Then there exists $X^\star \in \Sym^d_+$ with $\mathrm{rank}(X^\star) \leq k$ and $\tilde{X} \in \Sym^d_+$ such that $\mathcal{B}(\tilde{X}) = \mathcal{B}(X^\star), ~ \tilde{X} \neq X^\star$.  Based on the description of $\mathcal{B}$, we can conclude that $\mathrm{tr}(\tilde{X}) = 1$ and therefore $\tilde{X} \in \fs^d$.  This violates the property that $\mathcal{A}(X^\star)$ has a unique preimage in $\fs^d_0$; hence the exact recovery property does not hold.

Conversely, consider the implication that the unique preimage property implies the exact recovery property.  Assume for the sake of a contradiction that there exists $X^\star \in \Sym^d_+$ with $\mathrm{rank}(X^\star) \leq k$ and $\tilde{X} \in \fs^d_0$ such that $\mathcal{A}(\tilde{X}) = \mathcal{A}(X^\star), ~ \tilde{X} \neq X^\star$.  As $\mathrm{null}(\mathcal{A}) \cap \Sym^d_{++} \neq \emptyset$, there exists $X^0 \in \fs^d$ with $\mathrm{rank}(X^0) = d$ such that $\mathcal{A}(X^0) = 0$.  The point $X' = (1-\mathrm{tr}(\tilde{X})) X^0 + \tilde{X}$ has the property that $\mathcal{B}(X') = \mathcal{B}(X^\star)$.  Consequently, we have that $X^\star = X' = (1-\mathrm{tr}(\tilde{X})) X^0 + \tilde{X}$, which in turn implies that $X^0$ and $\tilde{X}$ belong to the smallest face of $\Sym^d_+$ containing $X^\star$.  However, as $\mathrm{rank}(X^0) = d$ but $\mathrm{rank}(X^\star) \leq k < d$, we have the desired contradiction.
\end{proof}

The next proposition represents the main new component of the proof of Theorem~\ref{thm:psd-inv}:
\begin{proposition}
	\label{prop:psd-inv}
Consider a linear map $\mathcal{A} : \Sym^d \rightarrow \R^n$ and define the linear map $\mathcal{B} : \Sym^d \rightarrow \R^{n+1}$ as $\mathcal{B}(X) = \begin{pmatrix}\mathcal{A}(X) \\ \mathrm{tr}(X)\end{pmatrix}$.  Fix a positive integer $k$.  Then we have the following two results:
\begin{enumerate}
\item Suppose the map $\mathcal{B}$ is surjective.  If the cone $\mathcal{B}(\Sym^d_+)$ satisfies the unique preimage property, then it satisfies the Terracini convexity property.

\item Assume that $n > {d+1 \choose 2} - {d-k+1 \choose 2}$.  Suppose there exists an open set $\mathfrak{S}$ in the space of linear maps from $\Sym^d$ to $\R^n$ satisfying the following conditions:
	\begin{itemize}
		\item $\mathcal{A}\in \mathfrak{S}$
		\item For each $\tilde{\mathcal{A}}\in \mathfrak{S}$, the associated linear
			map $\tilde{\mathcal{B}} : \Sym^d
			\rightarrow \R^{n+1}$ defined as
			$\tilde{\mathcal{B}}(X) =
			\begin{pmatrix}\tilde{\mathcal{A}}(X) \\
			\mathrm{tr}(X)\end{pmatrix}$ is surjective and the cone
			$\tilde{\mathcal{B}}(\Sym^d_+)$
			satisfies the Terracini convexity property.
	\end{itemize}
		Then the cone $\mathcal{B}(\Sym^d_+)$ satisfies the unique preimage property.
\end{enumerate}
\end{proposition}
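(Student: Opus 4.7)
My plan is to prove each part by adapting the strategy of Proposition~\ref{prop:up-tc-orthant} to the semidefinite setting, with the principal subtleties being the non-closure of the cone of feasible directions into $\Sym^d_+$ and the non-polyhedrality of $\Sym^d_+$.

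For Part 1, I will first observe that pointedness and closedness of $\mathcal{B}(\Sym^d_+)$ follow from $\mathrm{null}(\mathcal{B}) \cap \Sym^d_+ = \{0\}$ (immediate from the $\tr$ component of $\mathcal{B}$), and the correspondence of extreme rays follows from the unique preimage property applied at rank-$1$ matrices. For the $k$-Terracini condition I use the dual characterization of Proposition~\ref{prop:dual-def}: via the $\mathcal{B}^*$-isomorphism between $\N_{\mathcal{B}(\Sym^d_+)}(\mathcal{B}(X))$ and $W\cap \N_{\Sym^d_+}(X)$ with $W := \mathrm{null}(\mathcal{B})^\perp$, together with the Terracini convexity of $\Sym^d_+$ itself (Example~\ref{example:psd}), the required dual identity reduces to
\[ \mathrm{span}\bigl(W\cap \N_{\Sym^d_+}(X^\dagger)\bigr) = \bigcap_{i=1}^k \mathrm{span}\bigl(W\cap \N_{\Sym^d_+}(X^{(i)})\bigr) \]
for any collection of rank-$1$ matrices $X^{(1)},\dots,X^{(k)}$ summing to $X^\dagger$. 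In the orthant argument this followed from the strict-complementarity statement $W\cap \mathrm{ri}(\N_{\R^d_+}(\cdot))\neq \emptyset$, obtained from the unique preimage property by appealing to the closure of $\cfd_{\R^d_+}$. Since $\cfd_{\Sym^d_+}(X)$ is not closed, my plan is to leverage the collective strength of the unique preimage property across \emph{all} ranks $\leq k$: a pathological element of $\overline{\cfd_{\Sym^d_+}(X)}\setminus \cfd_{\Sym^d_+}(X)$ has a block structure whose obstructing ``null-direction cross term'' can be absorbed by enlarging $X$ by a suitable rank-$s$ positive semidefinite perturbation to $\tilde X$, whereupon the unique preimage property at $\tilde X$ (whenever $\mathrm{rank}(\tilde X)\leq k$) yields a contradiction and thus forces strict complementarity.

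For Part 2, I will induct on $k$, mirroring the orthant argument. The base case $k=1$ is immediate from the extreme-ray correspondence. For the inductive step, I suppose the unique preimage property holds at rank $\leq k-1$ but fails at some $X^\star$ of rank exactly $k$, and aim for a contradiction. The orthant proof extracted from Terracini convexity a pair of faces $\hat{\mathcal F}\subsetneq \mathcal F$ of $\R^d_+$ whose spans are orthogonal complements of two normal-cone spans, a step that crucially used that orthogonal complements of face spans of $\R^d_+$ are themselves face spans. For $\Sym^d_+$ this identification is unavailable, and this is where I will use the openness of $\mathfrak S$ together with the dimension bound $n > \binom{d+1}{2}-\binom{d-k+1}{2}$: this bound asserts that $W$ has codimension strictly less than the dimension of $\mathrm{span}(\N_{\Sym^d_+}(X^\star))$ at a rank-$k$ matrix, which ensures that suitable perturbations $\tilde{\mathcal A}\in \mathfrak S$ give $\tilde W = \mathrm{null}(\tilde{\mathcal B})^\perp$ that meets the relevant face spans transversally. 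The orthant's combinatorial extraction can then be carried out generically for $\tilde{\mathcal A}$ and transferred back to $\mathcal A$ by continuity.

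The hardest step I anticipate is in Part 1, where the absorption argument works cleanly for $X$ of rank strictly less than $k$ but must respect the rank budget at rank exactly $k$. This is likely the ``technical care'' alluded to in the excerpt and may require a bespoke device -- for instance, passing to an auxiliary semidefinite program whose strict complementarity is controlled by the unique preimage property at strictly lower rank, or exploiting a duality between rank-$k$ strict complementarity and rank-$(k-1)$ unique preimages via the facial structure of $\Sym^d_+$.
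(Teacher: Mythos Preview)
Your overall plan tracks the paper's structure, but there are substantive differences and one genuine gap.

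\textbf{Part 1.} Your ``absorption by rank enlargement'' idea is not how the paper resolves the non-closure issue, and the version you propose does hit the wall you anticipate at rank exactly $k$. The paper avoids this entirely: given a nonzero $M\in\mathrm{null}(\mathcal B)\cap\overline{\cfd_{\Sym^d_+}(X^\star)}$ with block decomposition $M=\left(\begin{smallmatrix}P&V'\\V&Q\end{smallmatrix}\right)$ (where $Q\succeq 0$ and $r=\mathrm{rank}(X^\star)$), it does \emph{not} perturb $X^\star$. Instead it picks any $W\in\Sym^r$ with $W\succ 0$ and $W+P\succ 0$, and observes that the rank-$r$ matrix $\left(\begin{smallmatrix}W&-V'\\-V&VW^{-1}V'\end{smallmatrix}\right)\in\Sym^d_+$ and the matrix $\left(\begin{smallmatrix}W+P&0\\0&Q+VW^{-1}V'\end{smallmatrix}\right)\in\Sym^d_+$ differ by exactly $M$, hence have the same $\mathcal B$-image. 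This violates the unique preimage property at a matrix of rank $r\leq k$, with no rank increase needed. So the ``bespoke device'' you are looking for is a Schur-complement construction producing a \emph{new} rank-$r$ witness, not a higher-rank perturbation of $X^\star$; your proposed mechanism would not close the argument at rank $k$.

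\textbf{Part 2.} You propose induction on $k$, mirroring the orthant proof; the paper does not induct. It argues directly: assuming $\mathrm{null}(\mathcal B)^\perp\cap\mathrm{ri}(\Omega)=\emptyset$ for some face $\Omega$ of dimension $\geq\binom{d-k+1}{2}$, Terracini convexity of $\mathcal B(\Sym^d_+)$ and the dimension hypothesis force the existence of a strictly smaller face $\hat\Omega\subsetneq\Omega$ with $\mathrm{null}(\mathcal B)^\perp\cap\Omega=\mathrm{null}(\mathcal B)^\perp\cap\hat\Omega$ and $\mathrm{null}(\mathcal B)^\perp\cap\mathrm{ri}(\hat\Omega)\neq\emptyset$. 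Choosing a nonzero $W\in\Omega\cap\mathrm{span}(\hat\Omega)^\perp$, the paper \emph{explicitly} tilts $\mathrm{null}(\mathcal A)^\perp$ in the direction of $W$ to obtain $\tilde{\mathcal A}\in\mathfrak S$ for which $\mathrm{null}(\tilde{\mathcal B})^\perp\cap\Omega=\{0\}$ while $\mathrm{null}(\tilde{\mathcal B})^\perp\cap\mathrm{span}(\Omega)\neq\{0\}$, contradicting Terracini convexity of $\tilde{\mathcal B}(\Sym^d_+)$. Your description (``carried out generically for $\tilde{\mathcal A}$ and transferred back by continuity'') is too vague to be a plan: the perturbation is not generic but is engineered from the specific geometric data $W$, and nothing is ``transferred back'' --- the contradiction occurs at $\tilde{\mathcal A}$, which suffices because $\tilde{\mathcal A}\in\mathfrak S$.
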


\emph{Remarks}: In the direction from the Terracini convexity property to the unique preimage property, the fact that $\Sym^d_+$ is not polyhedral, unlike $\R^d_+$, complicates matters in comparison to the proof of Proposition~\ref{prop:up-tc-orthant}.  Specifically, translated to the context of the present theorem, the reasoning up to \eqref{eq:l1contradiction} in Proposition~\ref{prop:up-tc-orthant} continues to hold, but the sentence immediately after \eqref{eq:l1contradiction} is no longer true.  As stated previously, the nature of this difficulty is akin to the lack of strict complementarity in semidefinite programs (in contrast to linear programs), thus necessitating some type of constraint qualification assumption.  The `robust Terracini' form of the assumption in the second part of this result is similar in spirit to assumptions discussed in \cite{renegar2001book} to ensure strong duality in conic programs.

\begin{proof}
We begin by presenting a dual reformulation of the unique preimage property.  For each $X^\star \in \Sym^d_+$ with $\mathrm{rank}(X^\star) \leq k$, the property that $\mathcal{B}(X^\star) \in \mathcal{C}$ has a unique preimage in $\Sym^d_+$ is equivalent to the transverse intersection condition $\mathrm{null}(\mathcal{B}) \cap \cfd_{\Sym^d_+}(X^\star) = \{0\}$.  Unlike the situation with Proposition~\ref{prop:up-tc-orthant}, the cone of feasible directions $\cfd_{\Sym^d_+}(X^\star)$ is \emph{not} closed, which presents additional complications.  We prove next that we must have $\mathrm{null}(\mathcal{B}) \cap \overline{\cfd_{\Sym^d_+}(X^\star)} = \{0\}$ by reasoning that if there exists a nonzero $M \in \mathrm{null}(\mathcal{B}) \cap \overline{\cfd_{\Sym^d_+}(X^\star)}$ then there is a low-rank matrix near $X^\star$ for which the unique preimage property does not hold.

	Concretely, suppose for the sake of a contradiction that $M \in \mathrm{null}(\mathcal{B}) \cap \overline{\cfd_{\Sym^d_+}(X^\star)}$ with $M \neq 0$.  Without loss of generality, we assume that $X^\star$ has rank $r \in \{1,\dots,k\}$ with the row/column space equal to the span of the first $r$ standard basis vectors.  For such an $X^\star$, the closure of the cone of feasible directions $\overline{\cfd_{\Sym^d_+}(X^\star)}$ takes on a convenient block-diagonal form, so that $M \in \overline{\cfd_{\Sym^d_+}(X^\star)}$ may be viewed as follows:
\begin{equation*}
M = \begin{pmatrix}
P & V' \\ V & Q
\end{pmatrix},
\end{equation*}
	with $P \in \Sym^r, V \in \R^{(n-r) \times r}, Q \in \Sym^{(n-r)}_+$.  We now construct a rank-$r$ matrix for which the unique preimage property does not hold, thus violating the given assumption.  Choose any matrix $W \in \mathbb{S}^r$ such that $W$ and $W+P$ are strictly positive definite.
	We have that the matrix $\begin{pmatrix} W & -V' \\ -V & V W^{-1} V' \end{pmatrix}$ belongs to $\mathbb{S}^d_+$ and has rank equal to $r$.  Further, we also have that the matrix $\begin{pmatrix}W+P & 0 \\ 0 & Q + V W^{-1} V' \end{pmatrix}$ lies in $\mathbb{S}^d_+$.  Consequently, we have that the matrix:
\begin{equation*}
\begin{pmatrix} W+P & 0 \\ 0 & Q + V W^{-1} V' \end{pmatrix} - \begin{pmatrix} W & -V' \\ -V & V W^{-1} V' \end{pmatrix} = \begin{pmatrix} P & V' \\ V & Q \end{pmatrix}
\end{equation*}
	lies in the cone of feasible directions from $\begin{pmatrix} W & -V' \\ -V & V W^{-1} V' \end{pmatrix}$ into $\mathbb{S}^d_+$.  
		Since $M\in \mathrm{null}(\mathcal{B})$,
	\[ \mathcal{B}\begin{pmatrix} W & -V' \\ -V & V W^{-1} V' \end{pmatrix} = \mathcal{B}\begin{pmatrix} W+P & 0 \\ 0 & Q + V W^{-1} V' \end{pmatrix}\]
		and so 	the image of the rank-$r$ matrix $\begin{pmatrix} W & -V' \\ -V & V W^{-1} V' \end{pmatrix}$ under the map $\mathcal{B}$ does not have a unique preimage in $\Sym^d_+$, which gives us the desired contradiction.  In summary, we have for each $X^\star \in \Sym^d_+$ with $\mathrm{rank}(X^\star) \leq k$ that $\mathrm{null}(\mathcal{B}) \cap \overline{\cfd_{\Sym^d_+}(X^\star)} = \{0\}$, which in turn is equivalent to $\mathrm{null}(\mathcal{B})^\perp \cap \mathrm{ri}(\N_{\Sym^d_+}(X^\star)) \neq \emptyset$.  In analogy to the case of the nonnegative orthant, the positive-semidefinite cone $\Sym^d_+$ is self-dual and the normal cone $\N_{\Sym^d_+}(X^\star)$ is given by a face of $\Sym^d_+$ of dimension at least ${d-k+1 \choose 2}$ (corresponding to positive-semidefinite matrices with row/column space orthogonal to those of $X^\star$).  Thus, the unique preimage property states that for any face $\Omega$ of $\Sym^d_+$ of dimension at least ${d-k+1 \choose 2}$, we have that $\mathrm{null}(\mathcal{B})^\perp \cap \mathrm{ri}(\Omega) \neq \emptyset$.

Next, we note that for each $\tilde{\mathcal{A}} \in \mathfrak{S}$, the associated linear map $\tilde{\mathcal{B}}$ is such that the cone $\tilde{\mathcal{B}}(\Sym^d_+)$ is closed and pointed by construction.  Further, each $\tilde{\mathcal{B}}$ is surjective by assumption.  Thus, elements of the normal cone $\N_{\tilde{\mathcal{B}}(\Sym^d_+)}(\mathcal{B}(X))$ are in one-to-one correspondence with those of $\mathrm{null}(\tilde{\mathcal{B}})^\perp \cap \N_{\Sym^d_+}(X)$ for each $X \in \Sym^d_+$.  Hence, by appealing to Proposition~\ref{prop:dual-def}, the Terracini convexity property states that for any face $\Omega$ of $\Sym^d_+$ of dimension at least ${d-k+1 \choose 2}$, we have that $\mathrm{span}(\mathrm{null}(\tilde{\mathcal{B}})^\perp \cap \Omega) = \mathrm{null}(\tilde{\mathcal{B}})^\perp \cap \mathrm{span}(\Omega)$.

With these reformulations of the unique preimage property and the Terracini convexity property, we now proceed to establish the result.

\underline{Proof of Statement $1$} To prove the first result, we begin by noting that unique preimage property applied to rank-one elements of $\Sym^d_+$ implies that the cone $\mathcal{B}(\Sym^d_+)$ has extreme rays in one-to-one correspondence with those of $\Sym^d_+$. Next, let $M \in \mathrm{null}(\mathcal{B})^\perp \cap \mathrm{ri}(\Omega)$.  Letting $U$ be an open set in $\Sym^d$ containing the origin, we have that $M + \epsilon [U \cap \mathrm{null}(\mathcal{B})^\perp \cap \mathrm{span}(\Omega)] \subset \mathrm{null}(\mathcal{B})^\perp \cap \mathrm{ri}(\Omega)$ for a sufficiently small $\epsilon > 0$.  Consequently, we can conclude that $\mathrm{span}(\mathrm{null}(\mathcal{B})^\perp \cap \Omega) = \mathrm{null}(\mathcal{B})^\perp \cap \mathrm{span}(\Omega)$, which is equivalent to the Terracini convexity condition.

\underline{Proof of Statement $2$} Next we consider the second statement.  Fix a face $\Omega$ of $\Sym^d_+$ of co-dimension at least ${d-k+1 \choose 2}$.  Suppose for the sake of a contradiction that $\mathrm{span}(\mathrm{null}(\mathcal{B})^\perp \cap \mathrm{ri}(\Omega)) = \emptyset$.  As $n > {d+1 \choose 2} - {d-k+1 \choose 2}$, we have that $\mathrm{null}(\mathcal{B})^\perp \cap \mathrm{span}(\Omega)$ is a subspace of positive dimension in $\Sym^d$.  By the Terracini convexity property applied to the cone $\mathcal{B}(\Sym^d_+)$, we have that $\mathrm{null}(\mathcal{B})^\perp \cap \Omega \neq \emptyset$.  Hence, there exists a proper face $\hat{\Omega}$ of $\Sym^d_+$ such that $\hat{\Omega} \subsetneq \Omega$, $\mathrm{null}(\mathcal{B})^\perp \cap \Omega = \mathrm{null}(\mathcal{B})^\perp \cap \hat{\Omega}$, and $\mathrm{null}(\mathcal{B})^\perp \cap \mathrm{ri}(\hat{\Omega}) \neq \emptyset$.  As a consequence, there also exists an element $W \in [\Omega \cap \mathrm{span}(\hat{\Omega})^\perp] \backslash \{0\}$.

We use the $W$ available to us to construct a linear map $\tilde{\mathcal{A}}$ in $\mathfrak{S}$.  Specifically, there exists $\epsilon > 0$ such that:
\begin{equation*}
\mathrm{null}(\tilde{\mathcal{A}})^\perp = \{M - \epsilon \|M\| W \;:\; M \in \mathrm{null}(\mathcal{A})^\perp \}
\end{equation*}
for some $\tilde{\mathcal{A}} \in \mathfrak{S}$.  Associated to this $\tilde{\mathcal{A}}$ is the linear map $\tilde{\mathcal{B}}$.  We show next that $\mathrm{null}(\tilde{\mathcal{B}})^\perp \cap \Omega = \{0\}$.  As $\tilde{\mathcal{B}}$ is surjective, we may consider the direct sum decomposition $\mathrm{null}(\tilde{\mathcal{B}})^\perp = \mathrm{null}(\tilde{\mathcal{A}})^\perp \oplus \mathrm{span}(I)$.  Thus, for any $Y \in \mathrm{null}(\tilde{\mathcal{B}})^\perp$, we have the decomposition $Y = M - \epsilon \|M\| W + c I$ for some $M \in \mathrm{null}(\mathcal{A})^\perp, c \in \R$.  If $Y \in \Omega$ then one can check that $Y + \epsilon \|M\| W \in \Omega$, and in particular, that $Y + \epsilon \|M\| W \notin \hat{\Omega}$ based on the construction of $W$, unless $M = 0$.  But we also have that $Y + \epsilon \|M\| W = M + c I$ and $M + cI \in \hat{\Omega}$, which implies that $M = 0$ and in turn that $c = 0$.  In summary, we obtain that $\mathrm{null}(\tilde{\mathcal{B}})^\perp \cap \Omega = \{0\}$.

	Next, we prove that $\mathrm{null}(\tilde{\mathcal{B}})^\perp \cap \mathrm{span}(\Omega)$ is a subspace of positive dimension by constructing a nonzero element in this subspace.  Recall that $\mathrm{null}(\mathcal{B})^\perp \cap \mathrm{ri}(\hat{\Omega}) \neq \emptyset$ and that $\hat{\Omega} \subset \Omega$.   Consider any $Z \in [\mathrm{null}(\mathcal{B})^\perp \cap \mathrm{ri}(\hat{\Omega})]$, which by construction is nonzero.  We have the expression $Z = M + c I$ with $M \in \mathrm{null}(\mathcal{A})^\perp \backslash \{0\}$ and $c \in \R$ based on the surjectivity of $\mathcal{B}$ and that $I \notin \Omega$.  It follows that $Z - \epsilon \|M\| W \in \mathrm{span}(\Omega) \backslash \{0\}$ as $Z \in \hat{\Omega} \backslash \{0\}$ and $W \in [\Omega \cap \mathrm{span}(\hat{\Omega})^\perp] \backslash \{0\}$.  Further, we also have that $Z - \epsilon \|M\| W = M - \epsilon \|M\| W + cI \in \mathrm{null}(\tilde{\mathcal{B}})^\perp$, as $M - \epsilon \|M\| W \in \mathrm{null}(\tilde{\mathcal{A}})^\perp$ and $\mathrm{null}(\tilde{\mathcal{B}})^\perp = \mathrm{null}(\tilde{\mathcal{A}})^\perp \oplus \mathrm{span}(I)$.  As a result, we have that $Z - \epsilon \|M\| W \in \mathrm{null}(\tilde{\mathcal{B}})^\perp \cap \mathrm{span}(\Omega) \backslash \{0\}$.

Finally, we consider the preceding two paragraphs together in the context of the Terracini convex property of the cone $\tilde{\mathcal{B}}(\Sym^d_+)$.  Specifically, we have that $\mathrm{null}(\tilde{\mathcal{B}})^\perp \cap \Omega = \{0\}$ and that $\mathrm{null}(\tilde{\mathcal{B}})^\perp \cap \mathrm{span}(\Omega)$ is a subspace of positive dimension.  This violates the reformulation of Terracini convexity of $\tilde{\mathcal{B}}(\Sym^d_+)$ that $\mathrm{span}(\mathrm{null}(\tilde{\mathcal{B}})^\perp \cap \Omega) = \mathrm{null}(\tilde{\mathcal{B}})^\perp \cap \mathrm{span}(\Omega)$.  This gives us the desired contradiction.
\end{proof}

Given the preceding two results, we now prove Theorem~\ref{thm:psd-inv}:

\begin{proof}[{Proof of Theorem~\ref{thm:psd-inv}}] For the first statement, we are given that $\mathrm{null}(\mathcal{A}) \cap \Sym^d_{++} \neq \emptyset$.  Hence, we can apply Lemma~\ref{lem:psd-inv} and obtain that the cone $\mathcal{B}(\Sym^d_+)$ satisfies the unique preimage property.  Next, in preparation to apply the first part of Proposition~\ref{prop:psd-inv}, we need to check that the linear map $\mathcal{B}$ is surjective, which is equivalent to $\mathcal{A}$ being surjective and $I \notin \mathrm{null}(\mathcal{A})^\perp$.  The former condition holds by assumption and the latter condition is in turn equivalent to $\mathrm{null}(\mathcal{A}) \nsubseteq \mathrm{span}(I)^\perp$.  The assumption $\mathrm{null}(\mathcal{A}) \cap \Sym^d_{++} \neq \emptyset$ implies that $\mathrm{null}(\mathcal{A}) \nsubseteq \mathrm{span}(I)^\perp$.  Thus, we are in a position to apply Proposition~\ref{prop:psd-inv} and obtain that the cone $\mathcal{B}(\Sym^d_+)$ satisfies the Terracini convexity property.

	For the second statement, we can apply the second part of Proposition~\ref{prop:psd-inv} to conclude that the cone $\mathcal{B}(\Sym^d_+)$ satisfies the unique preimage property.  Applying Lemma~\ref{lem:psd-inv}, we conclude that the map $\mathcal{A}$ satisfies the exact recovery property.
\end{proof}

\subsection{New Families of Terracini Convex Cones}
\label{sec:inv-new}
The results from the preceding section lead naturally to new families of Terracini convex cones.  Specifically, from the literature on the semidefinite relaxation \eqref{eq:R1} we have that the exact recovery property is satisfied with high probability by random linear maps $\mathcal{A}$ of suitable dimension \cite{rfp2010nuclear,candesplan2011}.  Combined with the first part of Theorem~\ref{thm:psd-inv}, we obtain Terracini convex cones that are specified as linear images of the cone of the positive-semidefinite matrices.

\begin{theorem}
	\label{thm:most-tc}
	Let $A_1,\dots,A_n \in \R^{d \times d}$ be a collection of independent random matrices in which each $A_i$ is a Gaussian random matrix with i.i.d entries that have zero-mean and variance $\tfrac{1}{n}$, and suppose  $n \leq (1/2-\epsilon) {d+1 \choose 2}$ for some $\epsilon\in (0,1/2)$.  Consider the linear map $\mathcal{B}: \mathbb{S}^d \rightarrow \R^{n+1}$ defined as $\mathcal{B}(X) = \begin{pmatrix}\mathrm{tr}(A_1 X) \\ \vdots \\ \mathrm{tr}(A_n X) \\ \mathrm{tr}(X)\end{pmatrix}$.  There exist constants $c_1, c_2 > 0$ and $c_3(\epsilon)>0$ (depending on $\epsilon$), such that for $k = \lfloor \tfrac{c_1n}{d} \rfloor$, the cone $\mathcal{B}(\mathbb{S}^d) \subset \R^{n+1}$ is $k$-Terracini convex with probability greater than $1-2e^{-c_2n} - e^{-c_3(\epsilon)n}$.
\end{theorem}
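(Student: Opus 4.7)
The plan is to apply the first part of Theorem~\ref{thm:psd-inv} to the map $\mathcal{A}: \Sym^d \to \R^n$ defined by $(\mathcal{A}(X))_i = \tr(A_i X)$; the map $\mathcal{B}$ in the present statement is precisely the companion map of Theorem~\ref{thm:psd-inv}. That theorem then yields $k$-Terracini convexity of $\mathcal{B}(\Sym^d_+)$ provided I can verify, with the prescribed probability and for $k = \lfloor c_1 n/d\rfloor$, that (a) $\mathcal{A}$ is surjective, (b) $\mathrm{null}(\mathcal{A}) \cap \Sym^d_{++} \neq \emptyset$, and (c) $\mathcal{A}$ has the exact recovery property for PSD matrices of rank at most $k$. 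The conclusion is then a union bound.

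Conditions (a) and (c) are standard. For (a), the matrices $A_1,\dots,A_n$, viewed as elements of $\Sym^d$, are i.i.d.\ with a continuous distribution and $n \leq (1/2-\epsilon)\binom{d+1}{2} < \binom{d+1}{2}$, so they are linearly independent almost surely and $\mathcal{A}$ is surjective with probability one. For (c), I would invoke the now-classical guarantees on trace-norm (equivalently, nuclear-norm) minimization for recovering low-rank PSD matrices from Gaussian measurements, as developed in \cite{rfp2010nuclear,candesplan2011}: there exist absolute constants $c_1, c_2 > 0$ such that if $k \leq c_1 n/d$ then every PSD matrix of rank at most $k$ is the unique trace-minimizer in its affine fiber with probability at least $1 - 2 e^{-c_2 n}$. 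Choosing $k = \lfloor c_1 n/d \rfloor$ makes this directly applicable, and for $c_1$ sufficiently small we also have $k < d$ as required by Theorem~\ref{thm:psd-inv}.

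The main content is (b). The null space of $\mathcal{A}$ has dimension $D = \binom{d+1}{2} - n \geq (1/2 + \epsilon)\binom{d+1}{2}$, and its distribution (induced by the Gaussian $A_i$) is invariant under the orthogonal action on $\Sym^d$. The statistical dimension of $\Sym^d_+$ satisfies $\delta(\Sym^d_+) = \tfrac{1}{2}\binom{d+1}{2} + O(d)$. Because the hypothesis gives a multiplicative slack, $D + \delta(\Sym^d_+) - \binom{d+1}{2} \geq \epsilon \binom{d+1}{2}$, which is of order $n$; the approximate kinematic formula of Amelunxen--Lotz--McCoy--Tropp therefore implies $\mathrm{null}(\mathcal{A}) \cap (\Sym^d_+ \setminus \{0\}) \neq \emptyset$ with probability at least $1 - e^{-c_3(\epsilon) n}$ for some $c_3(\epsilon) > 0$. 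Passing from $\Sym^d_+$ to $\Sym^d_{++}$ is then a transversality step: the boundary of $\Sym^d_+$ is a finite union of strata (the faces) each of positive codimension in $\Sym^d$, and for each such stratum the event that $\mathrm{null}(\mathcal{A})$ intersects it is a measure-zero event for Gaussian $A_i$, so generically any point of $\mathrm{null}(\mathcal{A}) \cap \Sym^d_+ \setminus \{0\}$ has full rank.

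Combining (a), (b), (c) via a union bound and applying the first part of Theorem~\ref{thm:psd-inv} yields the stated conclusion with probability at least $1 - 2e^{-c_2 n} - e^{-c_3(\epsilon) n}$. The step I expect to be the main obstacle is making (b) quantitative with an explicit constant $c_3(\epsilon)$: the statistical-dimension computation for $\Sym^d_+$ and the corresponding concentration in the kinematic formula need to be cited carefully. A more elementary alternative would be to study the orthogonal projection of $I \in \Sym^d$ onto $\mathrm{null}(\mathcal{A})$ and bound its perturbation from a positive multiple of $I$ in operator norm, showing directly that this projection is positive definite with high probability.
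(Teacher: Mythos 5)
Your proposal follows essentially the same route as the paper: reduce to the first part of Theorem~\ref{thm:psd-inv} (via Lemma~\ref{lem:psd-inv}), import the exact recovery guarantee for trace/nuclear-norm minimization over the free spectrahedron from \cite{rfp2010nuclear,candesplan2011}, establish $\mathrm{null}(\mathcal{A}) \cap \Sym^d_{++} \neq \emptyset$ through the statistical dimension of $\Sym^d_+$ and Gordon-type/kinematic-formula estimates \cite{gordon1988,crpw2012atomic,livingedge2014}, and finish with a union bound. The one substantive flaw is your justification of the passage from $\mathrm{null}(\mathcal{A}) \cap \Sym^d_+ \neq \{0\}$ to $\mathrm{null}(\mathcal{A}) \cap \Sym^d_{++} \neq \emptyset$. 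The claim that the null space avoids each boundary stratum almost surely is false: the stratum of rank-$(d-1)$ positive semidefinite matrices has codimension one in $\Sym^d$, and a random subspace of dimension $\binom{d+1}{2}-n \geq 2$ meets it with high probability --- indeed, whenever the null space meets the interior of the cone it must also cross the boundary away from the origin, and generic boundary points have rank $d-1$. The event you actually need to rule out is not ``the null space meets the boundary'' but ``the null space \emph{touches} the cone,'' i.e., meets $\Sym^d_+$ nontrivially while missing $\Sym^d_{++}$; it is this touching event that has probability zero for a rotation-invariant random subspace. That is precisely the lemma underlying the approximate kinematic formula in \cite{livingedge2014}, and it is also the (tersely stated) fact on which the paper's own assertion that the two probabilities coincide rests. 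With that substitution --- and your fallback of the explicit statistical-dimension bound $\delta(\Sym^d_+) = \tfrac12\binom{d+1}{2}+O(d)$ to get the quantitative $e^{-c_3(\epsilon)n}$ tail --- your argument is complete and matches the paper's.
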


\begin{proof}
We begin with a geometric reformulation of the exact recovery property of Section~\ref{sec:inv-psd} based on the argument presented in Lemma~\ref{lem:psd-inv}.  Specifically, for a given linear map $\mathcal{A} : \Sym^d \rightarrow \R^n$ and a positive integer $k$, the exact recovery property of Section~\ref{sec:inv-psd} is equivalent to the condition that for any $X^\star \in \Sym^d$ with $\mathrm{rank}(X^\star) \leq k$ and $\mathrm{tr}(X^\star) = 1$, we have that $\mathcal{A}(X^\star)$ has a unique preimage in the solid spectraplex $\fs^d_0 = \{X \in \Sym^d \;:\; \mathrm{tr}(X) \leq 1, ~ X \succeq 0\}$.

	The results in \cite{rfp2010nuclear,candesplan2011} concern a more general geometric criterion which can be specialized to our context.  These results are stated in terms of the matrix nuclear norm $\|\cdot\|_\star = \sum_i \sigma_i(\cdot)$ (i.e., the sum of the singular values).  Consider the linear map $\hat{\mathcal{A}} : \R^{d \times d} \rightarrow \R^n$ defined in terms of the Gaussian random matrices $A_1,\dots,A_n$ as $\hat{\mathcal{A}}(M) = \begin{pmatrix} \mathrm{tr}(A_1 M) \\ \vdots \\ \mathrm{tr}(A_n M) \end{pmatrix}$.  There exist constants $c_1,c_2>0$ such that if $k = \lfloor \tfrac{c_1n}{d} \rfloor$, then with probability at least $1-2e^{-c_2n}$, for every $M^\star \in \R^{d \times d}$ with $\mathrm{rank}(M^\star) \leq k$ and $\|M^\star\|_\star = 1$, the point $\hat{\mathcal{A}}(M^\star)$ has a unique preimage in the nuclear norm ball $\{M \in \R^{d \times d} \;:\; \|M\|_\star \leq 1\}$ \cite{candesplan2011}.  Note that the solid spectraplex $\fs^d_0 \subset \Sym^d \subset \R^{d \times d}$ is a subset of the nuclear norm unit ball.  Thus, with the same value of $k = \lfloor \tfrac{c_1 n}{d} \rfloor$, one can conclude that the linear map $\mathcal{A}$ defined by the \emph{restriction} of $\hat{\mathcal{A}}$ to the domain $\Sym^d$
		satisfies the exact recovery property of Section~\ref{sec:inv-psd} for $k = \lfloor \tfrac{c_1 n}{d} \rfloor$ with probability greater than $1-2e^{-c_2n}$.

	Further, we have that $\mathrm{null}(\mathcal{A}) \cap \Sym^d_{++} \neq \emptyset$
	with probability at least $1-e^{-c_3(\epsilon) n}$. This follows
	from the observation that the probability
	that $\mathrm{null}(\mathcal{A}) \cap \Sym^d_{++} \neq \emptyset$ is the
	same as the probability that $\mathrm{null}(\mathcal{A}) \cap \Sym_+^d \neq \{0\}$.
	This latter quantity can be estimated using the results from \cite{gordon1988,crpw2012atomic,livingedge2014}, using the fact that the positive semidefinite
	cone is self-dual.

	Therefore, by a union bound, the assumptions of the first part of Theorem~\ref{thm:psd-inv} are satisfied,
	and hence the cone $\mathcal{B}(\Sym^d)$ is $k$-Terracini convex,
	with probability at least $1-2e^{-c_2n} - e^{-c_3(\epsilon)n}$.
\end{proof}

Thus, in some sense `most' linear images of the cone of positive semidefinite matrices are $k$-Terracini convex for a suitable $k$ depending on the dimension of the image of the linear map.  This result offers a semidefinite analog of the result of Donoho and Tanner \cite{donohotanner2005neighborliness} on neighborliness of linear images the nonnegative orthant.  Linear images of the positive semidefinite cone are semialgebraic but are generally not basic semialgebraic (as this property is not preserved under linear projections).  In the next section, we describe an approach to obtaining basic semialgebraic Terracini convex cones from the positive semidefinite cone via an different construction based on the viewpoint of hyperbolic programming.

\section{Terracini Convexity and Derivative Relaxations of Hyperbolicity Cones}
\label{sec:hyperbolic}
In this section, we study Terracini convexity from a more algebraic perspective by focusing on a class of convex cones that are obtained from hyperbolic polynomials, which are multivariate polynomials possessing certain real-rootedness properties.  The associated cones are called hyperbolicity cones, and among the prototypical examples of such cones are the nonnegative orthant and the positive semidefinite cone.  Where the previous section demonstrated that generic linear images of the nonnegative orthant and of the positive semidefinite cone are $k$-Terracini convex for suitable $k$, here we show that the (algebraically defined) operation of taking derivative relaxations of the nonnegative orthant and of the positive semidefinite cone lead to hyperbolicity cones with non-trivial Terracini convexity properties.  As hyperbolicity cones are basic semialgebraic, i.e., they are defined by finitely many polynomial inequalities, a remarkable fact about the $k$-Terracini convex cones we construct in this section is that they are all basic semialgebraic.  In contrast, the $k$-Terracini convex cones constructed in Section~\ref{sec:inverse} by taking projections of the
positive semidefinite cone are, in general, not basic semialgebraic.

The rest of the section is organized in the following way. In Section~\ref{sec:hyp-basics}, we briefly state basic definitions and terminology related to hyperbolic polynomials, hyperbolicity cones, and their derivative relaxations, as well as reviewing properties of the boundary and extreme rays of hyperbolicity cones. In Section~\ref{sec:hyp-tangent} we study tangent cones of hyperbolicity cones and how these interact with derivative relaxations.  In particular, we show that the tangent cone to a hyperbolicity cone at a point is the hyperbolicity cone associated with the localization	of the associated hyperbolic polynomial at that point. This gives us an algebraic handle on the objects arising in the definition of Terracini convexity. Section~\ref{sec:deriv-kterracini} is focused on establishing the main result on Terracini convexity properties of derivative relaxations of a class of hyperbolicity cones that includes the orthant, the positive semidefinite cone, and the cone of positive semidefinite Hankel matrices.

\subsection{Hyperbolicity Cones and Their Derivative Relaxations}
\label{sec:hyp-basics}
\paragraph{Hyperbolic polynomials}
Let $p$ be a polynomial with real coefficients that is homogeneous of degree $d$ in $n$ variables, and let $e\in \RR^n$.
We say that $p$ is \emph{hyperbolic with respect to $e$} if
$p(e)>0$ and, for each $x\in \RR^n$, the univariate polynomial $t\mapsto p(te-x)$ has only real roots.
Given $x\in \RR^n$ let
$\lambda_{\max}^{p,e}(x) = \lambda_1^{p,e}(x)\geq \lambda_2^{p,e}(x) \geq \cdots \geq \lambda_d^{p,e}(x) = \lambda_{\min}^{p,e}(x)$
 denote the roots of $t\mapsto p(te-x)$, or \emph{hyperbolic eigenvalues} of $x$ with respect to $p$ and $e$.  If $p$ and $e$ are clear from the context, we write $\lambda_1(x), \cdots, \lambda_d(x)$.
The \emph{rank} of $x\in \RR^n$, denoted $\textup{rank}_{p}(x)$, is the number of non-zero hyperbolic
eigenvalues of $x$ with respect to $p$ and $e$. The \emph{multiplicity} of $x$
is $\textup{mult}_p(x) = \textup{deg}(p) - \textup{rank}_p(x)$,
the number of zero hyperbolic eigenvalues of $x$.

\paragraph{Hyperbolicity cones}
Associated with a hyperbolic polynomial $p$ and direction of hyperbolicity $e$ is the \emph{closed hyperbolicity cone}
$\Lambda_+(p,e) = \{x\in \RR^n\;:\;\lambda_{\min}^{p,e}(x) \geq 0\}$.
This is a convex cone, a result due to G\r{a}rding~\cite{gaarding1959inequality}.  We denote the interior of this cone by $\Lambda_{++}(p,e)$.  If $\tilde{e} \in \Lambda_{++}(p,e)$, then $p$ is hyperbolic with respect $\tilde{e}$ and $\Lambda_+(p,e) = \Lambda_+(p,\tilde{e})$~\cite{gaarding1959inequality}.
If $p$ and $e$ are clear from the context, we write $\Lambda_+$ instead of $\Lambda_+(p,e)$ for brevity of notation.

Although the hyperbolic eigenvalues of $x$ with respect to $p$ depend on the choice of $e$,
the multiplicity, $\textup{mult}_p(x)$, and rank, $\textup{rank}_p(x)$, are independent of the choice of direction of
hyperbolicity~\cite[Proposition 22]{renegar2006hyperbolic}.
The lineality space of the hyperbolicity cone $\Lambda_+$ is exactly the set of points with
multiplicity $\textup{deg}(p)$ (or rank zero), i.e.,
\begin{equation}
\label{eq:lin-rank} \Lambda_+\cap (-\Lambda_+) = \{x\in \RR^n\;:\; \textup{mult}_p(x) = \textup{deg}(p)\}.
\end{equation}
(see, e.g.,~\cite[Proposition 11]{renegar2006hyperbolic}).
If we expand $p(x+te)$ in powers of $t$ as
\begin{equation}
\label{eq:hyp-coefs}
	 p(x+te) = a_0t^d+a_1(x)t^{d-1} + \cdots + a_{d-2}(x)t^2 + a_{d-1}(x)t + a_d(x),
\end{equation}
then Descartes' rule of signs gives an equivalent
description of the hyperboicity cone as
\[ \Lambda_+(p,e) = \{x\in \RR^n\;:\; a_d(x)\geq 0,\; a_{d-1}(x)\geq 0,\;a_{d-2}(x)\geq 0,\;\ldots,\; a_1(x) \geq 0\}.\]
This shows that any hyperbolicity cone is a \emph{basic semialgebraic} set, i.e., it can be expressed
via finitely many polynomial inequalities.

\paragraph{Derivative relaxations}
If $p$ is hyperbolic with respect to $e$ and $\tilde{e} \in \Lambda_{++}(p,e)$, then the directional derivative
\[ D_{\tilde{e}}p(x):= \left.\frac{d}{dt}p(x+t \tilde{e}) \right|_{t=0}\]
is again hyperbolic with respect to $e$ (by Rolle's theorem).
The hyperbolicity cone $\Lambda_+(D_{\tilde{e}}p,e)$ satisfies $\Lambda_+(D_{\tilde{e}}p,e) \supseteq \Lambda_+(p,e)$. As such, it is
often referred to as a \emph{derivative relaxation} of $\Lambda_+(p,e)$. When $p$, $e$, and $\tilde{e}$ are clear from
the context we abuse notation and write $\Lambda_+':= \Lambda_+(D_{\tilde{e}}p,e)$ for brevity.

One of the most interesting aspects of derivative relaxations is that boundary points (of high enough multiplicity) of $\Lambda_+$
remain boundary points of $\Lambda_+'$.
\begin{theorem}[{Renegar~\cite[Theorem 12]{renegar2006hyperbolic}}]
\label{thm:mult3}
Let $p$ be hyperbolic with respect to $e$ with hyperbolicity cone $\Lambda_+$, and for any $\tilde{e} \in \Lambda_{++}(p,e)$ let the associated derivative relaxation be $\Lambda_+'$.
If $m\geq 3$ then
	\[ \{x\in \Lambda_+\;:\; \textup{mult}_p(x) = m\} = \{x\in \Lambda_+'\;:\; \textup{mult}_{D_{\tilde{e}}p}(x) = m-1\}.\]
\end{theorem}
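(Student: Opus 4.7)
The plan is to establish a sharper \emph{multiplicity-drop formula}, namely
\begin{equation*}
\textup{mult}_p(x) = l \geq 1 \;\Longrightarrow\; \textup{mult}_{D_{\tilde{e}}p}(x) = l - 1 \qquad\text{for every } x \in \RR^n,
\end{equation*}
and to combine it with an eigenvalue interlacing lemma. The drop formula applied with $l = m \geq 3$ immediately yields the $\subseteq$ inclusion (since $\Lambda_+(p,e)\subseteq \Lambda_+(D_{\tilde{e}}p,e)$), while the $\supseteq$ inclusion will additionally require the interlacing lemma to exploit the assumption $x \in \Lambda_+(D_{\tilde{e}}p,e)$.

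To prove the drop formula I would Taylor-expand $p$ about $x$ as $p(x + y) = P_l(y) + P_{l+1}(y) + \cdots$, where $P_l$ is the degree-$l$ leading form of $p$ at $x$. The localization lemma of Section~\ref{sec:hyp-tangent} identifies $P_l$ as a hyperbolic polynomial with respect to $e$ whose hyperbolicity cone contains $\Lambda_+(p,e)$; in particular $\tilde{e}$ lies in the interior of the hyperbolicity cone of $P_l$, so the hyperbolic eigenvalues of $\tilde{e}$ with respect to $(P_l,e)$ are strictly positive. Substituting the expansion into
\begin{equation*}
Q(t) := (D_{\tilde{e}}p)(x + te) = \sum_{k\geq 0} \frac{t^k}{k!}\, D_{\tilde{e}} D_e^k p(x),
\end{equation*}
the coefficients of $t^k$ for $k \leq l - 2$ vanish because $p$ has order of vanishing at least $l$ at $x$, while a brief homogeneity argument identifies the coefficient of $t^{l-1}$ as a nonzero multiple of $P_l(e) \cdot \sum_i \lambda_i^{P_l,e}(\tilde{e})$, nonzero because $\tilde{e}$ lies in the interior of the hyperbolicity cone of $P_l$. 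Hence $0$ is a root of $Q$ of multiplicity exactly $l - 1$, proving the drop formula.

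For the $\supseteq$ direction, given $x \in \Lambda_+(D_{\tilde{e}}p,e)$ with $\textup{mult}_{D_{\tilde{e}}p}(x) = m - 1 \geq 2$, the key additional tool is the interlacing lemma: the eigenvalues $\mu_1 \geq \cdots \geq \mu_{d-1}$ of $x$ with respect to $(D_{\tilde{e}}p,e)$ interlace the eigenvalues $\lambda_1 \geq \cdots \geq \lambda_d$ with respect to $(p,e)$, i.e., $\lambda_i \geq \mu_i \geq \lambda_{i+1}$. I would derive this from the partial-fraction identity $Q(t)/P(t) = \sum_i c_i/(t + \lambda_i)$, where $P(t) = p(x + te)$ and $c_i = \partial_s \lambda_i^{p,e}(x + s\tilde{e})\big|_{s = 0^+} > 0$, the strict positivity of $c_i$ again coming from the localization lemma (applied at each block of repeated $\lambda_i$). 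The hypothesis $x \in \Lambda_+(D_{\tilde{e}}p,e)$ forces $\mu_i \geq 0$, so interlacing gives $\lambda_1, \ldots, \lambda_{d-1} \geq 0$ and $\lambda_d \leq \mu_{d-1}$.

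It remains to rule out $\textup{mult}_p(x) < m$. If $\textup{mult}_p(x) = 0$, a short case analysis on the sign of $\lambda_d$ shows that at most $\mu_{d-1}$ can vanish, giving $\textup{mult}_{D_{\tilde{e}}p}(x) \leq 1$ and contradicting $m - 1 \geq 2$; hence $\textup{mult}_p(x) \geq 1$, and the drop formula forces $\textup{mult}_p(x) = m$. Finally, the pattern $0 = \mu_{d - m + j} \in [\lambda_{d - m + j + 1}, \lambda_{d - m + j}]$ for $j = 1, \ldots, m - 1$ propagates equality to $\lambda_{d - m + 2} = \cdots = \lambda_{d - 1} = 0$, and since $\textup{mult}_p(x) = m$ the two remaining zero eigenvalues must lie at positions $d - m + 1$ and $d$; in particular $\lambda_d = 0$ and all $\lambda_i \geq 0$, giving $x \in \Lambda_+(p,e)$. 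The main obstacle is the careful bookkeeping near the boundary of the interlacing where $\mu_i$ and $\lambda_i$ cross zero; this is exactly where the assumption $m \geq 3$ enters, and the case $m = 2$ genuinely fails, as can be seen with simple examples such as $p = t_1 t_2 t_3$.
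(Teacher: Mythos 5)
The paper does not actually prove this statement: it is quoted directly from Renegar (Theorem 12 of \cite{renegar2006hyperbolic}), so there is no in-paper argument to compare against, and your proposal stands or falls on its own. It is essentially correct, and the two ingredients (exact multiplicity drop plus strict interlacing) are the right ones. Three remarks. First, the drop formula admits a one-line proof that bypasses the leading-coefficient computation: since multiplicity is independent of the direction of hyperbolicity (\cite[Proposition 22]{renegar2006hyperbolic}, already invoked in the paper), compute it along $\tilde e$ rather than $e$; then $s\mapsto D_{\tilde e}p(x+s\tilde e)$ is literally the $s$-derivative of $s\mapsto p(x+s\tilde e)$, and a root of exact order $l\geq 1$ of a univariate polynomial is a root of exact order $l-1$ of its derivative. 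Your coefficient computation is also valid --- the positivity of $D_{\tilde e}\loc{p}{x}(e)$ follows from $\tilde e\in\Lambda_{++}(p,e)\subseteq\Lambda_{++}(\loc{p}{x},e)$ --- just longer. Second, the \emph{strict} positivity of the residues $c_i$ is the crux of the interlacing step, and your justification (the derivatives $t_i'(0;x,\tilde e)$ are eigenvalues of $\tilde e$ with respect to the localization of $p$ at $x-\lambda_i e$, hence positive) is correct; this is exactly Lemma~\ref{lem:loc-eig} applied blockwise.

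The one step stated too casually is the last one. Weak interlacing $\lambda_{i+1}\leq\mu_i\leq\lambda_i$ does \emph{not} by itself force the zero block of the $\lambda$'s to occupy positions $d-m+1,\ldots,d$: the configuration $\lambda_{d-m}=\cdots=\lambda_{d-1}=0>\lambda_d$ is consistent with weak interlacing and with $\mu_{d-m+1}=\cdots=\mu_{d-1}=0$, so ``the two remaining zero eigenvalues must lie at positions $d-m+1$ and $d$'' does not follow as written. To exclude it you need the strict form you already established: if $0$ is a $\lambda$-value and $\lambda_d<0$, then $Q/P=\sum_i c_i/(t+\lambda_i)$ with all $c_i>0$ decreases from $+\infty$ to $-\infty$ on the gap between the pole at $0$ and the pole at the largest negative $\lambda$-value, so $Q$ has a root strictly inside that gap, i.e.\ some $\mu_i<0$, contradicting $x\in\Lambda_+'$. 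With that application made explicit the argument is complete, and your example showing that $m=2$ genuinely fails (e.g.\ $p=t_1t_2t_3$ with $x=(2,2,-1)$, which lies in $\Lambda_+'$ with one zero eigenvalue for $D_ep$ but is not in the orthant) correctly locates where $m\geq 3$ enters.
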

As a straightforward corollary, we obtain a relationship between the lineality spaces of a hyperbolicity cone and
its derivative relaxation.
\begin{corollary}
\label{cor:lin-d3}
Under the same hypotheses as Theorem~\ref{thm:mult3}, if $\textup{deg}(p) \geq 3$ then $\Lambda_+\cap (-\Lambda_+)= \Lambda_+' \cap (-\Lambda_+')$.
\end{corollary}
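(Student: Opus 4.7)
The plan is to derive Corollary~\ref{cor:lin-d3} as a direct consequence of Theorem~\ref{thm:mult3} combined with the rank-based characterization \eqref{eq:lin-rank} of the lineality space of a hyperbolicity cone. Recall that \eqref{eq:lin-rank} says that for any hyperbolic polynomial $q$ (with direction of hyperbolicity $e$) the lineality space of $\Lambda_+(q,e)$ consists precisely of those points $x$ with $\textup{mult}_q(x) = \textup{deg}(q)$. Apply this to both $p$ and $D_{\tilde{e}}p$: setting $d = \textup{deg}(p)$, we have $\textup{deg}(D_{\tilde{e}}p) = d-1$, so
\begin{equation*}
\Lambda_+\cap (-\Lambda_+) = \{x\in \RR^n \;:\; \textup{mult}_p(x) = d\}, \quad
\Lambda_+'\cap (-\Lambda_+') = \{x\in \RR^n \;:\; \textup{mult}_{D_{\tilde{e}}p}(x) = d-1\}.
\end{equation*}

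The key step is to invoke Theorem~\ref{thm:mult3} with $m = d$. The hypothesis of the theorem requires $m \geq 3$, which is exactly where the assumption $\textup{deg}(p) \geq 3$ is used. The theorem then yields
\begin{equation*}
\{x\in \Lambda_+ \;:\; \textup{mult}_p(x) = d\} = \{x\in \Lambda_+' \;:\; \textup{mult}_{D_{\tilde{e}}p}(x) = d-1\}.
\end{equation*}

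The final step is to observe that the containment conditions ``$x \in \Lambda_+$'' on the left and ``$x \in \Lambda_+'$'' on the right are automatic and may be dropped. Indeed, if $\textup{mult}_p(x) = d$ then all hyperbolic eigenvalues of $x$ are zero (in particular $\lambda_{\min}^{p,e}(x) = 0 \geq 0$), so $x \in \Lambda_+$; and analogously for $D_{\tilde{e}}p$ with $\textup{mult}_{D_{\tilde e}p}(x) = d-1$. Combining this observation with the two displayed equalities above gives the claim. I do not anticipate any genuine obstacle here: the whole argument is a matter of lining up the multiplicity characterization of the lineality space with the special case $m = d$ of Theorem~\ref{thm:mult3}, and the degree hypothesis $\textup{deg}(p) \geq 3$ is used solely to ensure the applicability of that theorem.
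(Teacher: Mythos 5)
Your proof is correct and follows essentially the same route as the paper: both apply the characterization \eqref{eq:lin-rank} of the lineality space to $p$ and $D_{\tilde{e}}p$ and then invoke Theorem~\ref{thm:mult3} with $m = \textup{deg}(p)$, which is exactly where the hypothesis $\textup{deg}(p)\geq 3$ enters. Your additional remark that the containments $x\in\Lambda_+$ and $x\in\Lambda_+'$ are automatic when the multiplicity equals the degree is a correct (and slightly more careful) filling-in of a detail the paper leaves implicit.
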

\begin{proof}
This follows from Theorem~\ref{thm:mult3} by noting that the lineality space of $\Lambda_+$ is exactly
the set of $x$ with $\textup{mult}_p(x)=\textup{deg}(p)$ and the lineality space of $\Lambda_+'$ is exactly the
set of $x$ with $\textup{mult}_{D_{\tilde{e}}p}(x) = \textup{deg}(p)-1$.
\end{proof}
One consequence of Corollary~\ref{cor:lin-d3} is that if $\textup{deg}(p) \geq 3$ then $\Lambda_+$ being a pointed cone implies that any derivative relaxation $\Lambda_+'$ is also pointed.  Building on Corollary~\ref{cor:lin-d3}, we can understand how the extreme rays of the derivative cone and the original cone relate to each other.
In particular, the extreme rays of derivative relaxations are either extreme rays of the original cone or
extreme rays of multiplicity one.
\begin{corollary}
	\label{cor:ext-der}
Assume that $\Lambda_+$ is pointed and $\textup{deg}(p) \geq 3$, and let $\Lambda_+'$ be the derivative relaxation associated to any $\tilde{e} \in \Lambda_{++}(p,e)$. If $x$ generates an extreme ray of $\Lambda_+'$ then either $\textup{mult}_{D_{\tilde{e}}p}(x) = 1$ or $x$ generates an extreme ray of $\Lambda_+$ and $\textup{mult}_p(x) \geq 3$.
\end{corollary}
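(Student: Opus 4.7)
The plan is to split on the multiplicity $m' := \textup{mult}_{D_{\tilde{e}}p}(x)$ of the extreme ray generator, handling $m'=1$ trivially and reducing the remaining cases to Theorem~\ref{thm:mult3} combined with the inclusion $\Lambda_+ \subseteq \Lambda_+'$. The starting observation is that since $\Lambda_+'$ contains $e$ in its interior, it is full-dimensional, and by Corollary~\ref{cor:lin-d3} it is pointed; any extreme ray generator of such a cone lies on the boundary of $\Lambda_+'$, so at least one hyperbolic eigenvalue of $x$ with respect to $D_{\tilde{e}}p$ vanishes. Hence $m' \geq 1$ automatically, and the only genuine case to handle is $m' \geq 2$.

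So suppose $m' \geq 2$. Then $m := m'+1 \geq 3$, and Theorem~\ref{thm:mult3} applies: the set $\{y \in \Lambda_+' : \textup{mult}_{D_{\tilde{e}}p}(y) = m-1\}$ coincides with $\{y \in \Lambda_+ : \textup{mult}_p(y) = m\}$. Applied to $x$, this shows $x \in \Lambda_+$ and $\textup{mult}_p(x) = m'+1 \geq 3$, giving the multiplicity part of the conclusion.

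It remains to check that $x$ actually generates an extreme ray of $\Lambda_+$ (rather than merely lying in it). For this I would argue by contradiction using the ambient inclusion $\Lambda_+ \subseteq \Lambda_+'$: if $x = y + z$ is a decomposition with $y, z \in \Lambda_+$, then $y, z \in \Lambda_+'$ as well, and extremality of $x$ in $\Lambda_+'$ forces both $y$ and $z$ to be nonnegative scalar multiples of $x$, so $x$ generates an extreme ray of $\Lambda_+$ too.

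There is no real obstacle here; the whole argument is essentially a one-line bookkeeping exercise once Theorem~\ref{thm:mult3} is available, together with the routine observation that extremality is inherited by sub-cones. The only care needed is at the very start, to rule out $m' = 0$ via the boundary/full-dimensionality remark, so that Theorem~\ref{thm:mult3} can be invoked with $m \geq 3$ in the nontrivial branch.
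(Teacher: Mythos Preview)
Your proof is correct and follows essentially the same approach as the paper's: both split on whether $\textup{mult}_{D_{\tilde{e}}p}(x)\geq 2$, invoke Theorem~\ref{thm:mult3} to place $x$ in $\Lambda_+$ with $\textup{mult}_p(x)\geq 3$, and then use the inclusion $\Lambda_+\subseteq \Lambda_+'$ to transfer extremality from the larger cone to the smaller one. Your explicit remark that $m'\geq 1$ (because an extreme ray lies on the boundary of a full-dimensional pointed cone) is a small extra justification the paper leaves implicit, but otherwise the arguments coincide.
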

\begin{proof}
As $\Lambda_+$ is pointed and $\textup{deg}(p)\geq 3$ it follows from Corollary~\ref{cor:lin-d3}
that $\Lambda_+'$ is pointed. If $x$ generates an extreme ray of $\Lambda_+'$ and $\textup{mult}_{D_{\tilde{e}}p}(x) \geq 2$ then,
by Theorem~\ref{thm:mult3}, we can conclude that $\textup{mult}_{p}(x) \geq  3$ and $x\in \Lambda_+$.
Since $x\in \Lambda_+ \supseteq \Lambda_+'$ and $x$ generates an extreme ray of $\Lambda'$, it follows that $x$ generates an extreme ray of $\Lambda_+$.
\end{proof}

\subsection{Tangent Cones and Derivative Relaxations}
\label{sec:hyp-tangent}
In this section we study tangent cones of hyperbolicity cones, and in particular how tangent cones change when we take derivative relaxations.  We first show that the
tangent cone of a hyperbolicity cone $\Lambda_+(p,e)$ at a point $x$ is again a
hyperbolicity cone (Theorem~\ref{thm:hyp-tangent}) and that the corresponding
hyperbolic polynomial is the localization of $p$ at $x$
(Definition~\ref{def:hyp-localization}).  The main result of the section
(Theorem~\ref{thm:tangent-derivative}) is that the
tangent cone to $\Lambda_+'$ at a boundary point $x$
is the corresponding derivative relaxation of the
tangent cone to $\Lambda_+$ at that same point $x$.
This is the key technical result that enables us to
understand how $k$-Terracini convexity is affected by taking derivative
relaxations (see Section~\ref{sec:deriv-kterracini}).

\begin{definition}
\label{def:hyp-localization}
If $p$ is a hyperbolic polynomial with respect to $e$ and with associated hyperbolicity cone $\Lambda_+$, then the
\emph{localization of $p$ at $x\in \Lambda_+$}
is the polynomial of degree $\textup{mult}_p(x)$ defined by
\[ \loc{p}{x}(y) = \lim_{\lambda\rightarrow \infty}\lambda^{\textup{mult}_p(x)}p(x + \lambda^{-1} y) =
\lim_{\lambda\rightarrow\infty}\lambda^{-\textup{rank}_p(x)}p(\lambda x + y).\]
\end{definition}
\begin{example}
\label{eg:psd-loc}
Let $p(X) = \det(X)$ where $X$ is a $d\times d$ symmetric matrix of indeterminates, and let $e = I$. The corresponding
hyperbolicity cone is the cone of $d\times d$ positive semidefinite matrices.
Suppose that $X = \left[\begin{smallmatrix} Z & 0\\0 & 0\end{smallmatrix}\right]$ where
$Z$ is $k\times k$ and positive definite. Then, by the formula for the determinant of a block matrix in terms of the Schur complement,
\begin{align*}
	\loc{p}{X}\left(\begin{bmatrix} Y_{11}&Y_{12}\\Y_{12}^T & Y_{22}\end{bmatrix}\right)
	& = \lim_{\lambda\rightarrow {\infty}} \lambda^{d-k}
	\det\left(
	\begin{bmatrix} Z + \lambda^{-1}Y_{11} & \lambda^{-1}Y_{12}\\\lambda^{-1}Y_{12}^T & \lambda^{-1}Y_{22}\end{bmatrix}
	\right)\\
	& = \lim_{\lambda\rightarrow \infty}\lambda^{d-k}\det(\lambda^{-1}Y_{22}) \det(Z + \lambda^{-1}Y_{11} -
	\lambda^{-1}Y_{12}Y_{22}^{-1}Y_{12}^T)\\
	& = \det(Z)\det(Y_{22}).
\end{align*}
\end{example}
There is an alternative formulation of $\loc{p}{x}$ in terms of directional derivatives of $p$ in the $x$ direction.
This alternative formulation is particularly useful in understanding how derivative relaxations interact with localization.  In the forthcoming discussion, we refer on several occasions to higher-order directional derivatives of a hyperbolic polynomial, which we denote as a composition of first-order directional derivatives as $D_{y^{(k)}} \cdots D_{y^{(1)}} p$; if the directions $y^{(1)},\dots,y^{(k)}$ are the same, we denote the associated higher-order directional derivative in a more compact manner as $D_{y}^k p$.
\begin{lemma}
\label{lem:loc-alt}
If $p$ is a hyperbolic polynomial with respect to $e$ then
\[ \loc{p}{x}(y) = \frac{1}{\textup{mult}_{p}(x){!}}D^{\textup{mult}_p(x)}_y p(x).\]
\end{lemma}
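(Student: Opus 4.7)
The plan is to substitute the Taylor expansion of $p$ about $x$ into the first formula in Definition~\ref{def:hyp-localization} and then identify the surviving term. Setting $m = \textup{mult}_p(x)$ and using that $p$ has total degree $d$, I would write
\begin{equation*}
\lambda^{m} p(x + \lambda^{-1} y) \;=\; \sum_{k=0}^{d} \frac{\lambda^{m-k}}{k!}\, D_y^{k} p(x).
\end{equation*}
Once it is known that $D_y^{k} p(x) = 0$ for every $0 \le k < m$ and every $y \in \RR^{n}$, the terms with $k < m$ disappear (so there is no divergence in $\lambda$), the terms with $k > m$ tend to zero as $\lambda \to \infty$, and only the $k=m$ contribution survives, yielding exactly $\frac{1}{m!}D_y^{m} p(x)$. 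So the whole content of the lemma is reduced to the direction-independent vanishing of the low-order directional derivatives of $p$ at $x$.

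The plan for that vanishing claim is to first establish it in one distinguished direction and then propagate by polynomiality. By the definition of the hyperbolic eigenvalues, the univariate polynomial $t \mapsto p(te - x)$ has a root of multiplicity exactly $m$ at $t = 0$; equivalently, $s \mapsto p(x + se)$ vanishes to order $m$ at $s=0$, so $D_e^{k} p(x) = 0$ for all $k < m$. Since $\textup{mult}_p(x)$ is independent of the chosen direction of hyperbolicity (as recalled in Section~\ref{sec:hyp-basics}, following Renegar), the same argument applied with any $\tilde e \in \Lambda_{++}(p,e)$ in place of $e$ gives $D_{\tilde e}^{k} p(x) = 0$ for all $k < m$ and all $\tilde e$ in the nonempty open cone $\Lambda_{++}(p,e)$. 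For each fixed $k < m$, the map $y \mapsto D_y^{k} p(x)$ is a polynomial in $y$, and a polynomial vanishing on a nonempty open subset of $\RR^{n}$ vanishes identically; this yields the required vanishing for all $y \in \RR^{n}$.

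The main obstacle is precisely this direction-independent vanishing step, since the Taylor calculation itself is purely formal. An alternative would be to avoid the eigenvalue argument and read the vanishing off directly from the existence of the limit in Definition~\ref{def:hyp-localization} (any surviving $\lambda^{m-k}$ with $k<m$ would blow up), but this presupposes that the definition in terms of the limit is well-posed, which is cleanest to verify by the argument above. Once the vanishing is in hand, the lemma follows immediately by taking $\lambda \to \infty$ in the displayed expansion.
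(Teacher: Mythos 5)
Your proposal is correct and follows essentially the same route as the paper: Taylor-expand $p$ about $x$, use the direction-independence of $\textup{mult}_p(x)$ to get $D_y^k p(x)=0$ for $k<\textup{mult}_p(x)$ and all $y$ in the open cone $\Lambda_{++}(p,e)$, extend to all $y$ by the polynomial identity theorem, and take $\lambda\to\infty$. The only difference is that you spell out the single-direction vanishing via the order of the root of $t\mapsto p(te-x)$ slightly more explicitly than the paper does.
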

\begin{proof}
By a Taylor expansion,
\[ p(x+\lambda^{-1}y) = \sum_{k=0}^{\textup{deg}(p)}\frac{\lambda^{-k}}{k{!}}{D^{k}_y p(x)}.\]
Since $p$ vanishes to order $\textup{mult}_{p}(x)$ as $\lambda\rightarrow \infty$,
and $\textup{mult}_p(x)$ is independent of the choice of $e$ in the interior of the hyperbolicity cone, it follows that
$D^{k}_y p(x)=0$ whenever $y\in\textup{int}(\Lambda_{+}(p,e))$ and $0\leq k < \textup{mult}_p(x)$. As such,
if $k<\textup{mult}_p(x)$ then $y\mapsto D^k_y p(x)$ is a polynomial that vanishes on the interior of
the (full-dimensional) hyperbolicity cone, so it must be identically zero. Hence
\[ \lambda^{\textup{mult}_p(x)}p(x+\lambda^{-1}y) =
\sum_{k=\textup{mult}_p(x)}^{\textup{deg}(p)}\frac{\lambda^{\textup{mult}_p(x)-k}}{k{!}} D^k_y p(x).\]
Taking the limit as $\lambda\rightarrow \infty$ we obtain the stated result.
\end{proof}
We now consider localization of a hyperbolic polynomial from the point of view of its zeros. To do so, we use the following basic fact
about how hyperbolic eigenvalues change along different directions.
\begin{lemma}[{\cite[Lemma 3.27]{atiyah1970lacunas}}]
\label{lem:abg}
Suppose $p$ is hyperbolic with respect to $e$. If $x,u\in \RR^n$ then
\[ p(x-te+su) = p(e)\prod_{i=1}^{{\textup{deg}(p)}}(t_i(s;x,u)-t)\]
where the functions $s\mapsto t_i(s;x,u)$ are real analytic functions of $s$. Furthermore, if $u\in \Lambda_+(p,e)$ then
$t'_{i}(s;x,u) := \frac{d}{ds}t_i(s;x,u) \geq 0$ for all $s$.
\end{lemma}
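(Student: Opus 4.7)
The plan has three ingredients. First, for the factorization: fix $s$ and apply hyperbolicity of $p$ with respect to $e$ at $y := x + su$; then $t\mapsto p(te-y) = (-1)^{\deg p}\, p(x-te+su)$ has only real roots by definition, and comparing leading coefficients in $t$ yields $p(x-te+su) = p(e)\prod_{i=1}^{\deg p}(t_i(s;x,u)-t)$ for real numbers $t_i(s;x,u)$, which are precisely the hyperbolic eigenvalues $\lambda_i^{p,e}(x+su)$.

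For real-analyticity of the roots in $s$, observe that the coefficients of $t\mapsto p(x-te+su)$ are polynomial in $s$, and all roots are real for every $s$. A classical theorem of Rellich on analytic perturbation of self-adjoint structures (see, e.g., Kato's perturbation theory) then yields a real-analytic labeling $s\mapsto t_i(s;x,u)$. Alternatively, one can argue directly: on the open dense set of $s$ where all roots are simple (the complement of the discriminant locus), the implicit function theorem provides real-analytic local labelings, and real-rootedness forces a consistent real-analytic extension across points where roots collide. The degenerate case in which the discriminant vanishes identically in $s$ (only possible when $p$ has a repeated factor) is handled by reducing to the square-free part of $p$, which has the same hyperbolicity cone.

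For monotonicity when $u\in \Lambda_+$, by continuity of $t_i'$ it suffices to treat values of $s$ at which all roots $t_i(s)$ are distinct. Set $y_i := x - t_i(s) e + s u$, which lies in $\partial \Lambda_+$ since $\lambda_i^{p,e}(y_i) = 0$. Implicit differentiation of $p(y_i) = 0$ in $s$ yields $t_i'(s) = D_u p(y_i)/D_e p(y_i)$. The key geometric claim is that for any $v\in \Lambda_+$ and $y\in \partial \Lambda_+$, one has $D_v p(y) \geq 0$: indeed $y + \epsilon v\in \Lambda_+$ for $\epsilon\geq 0$, and since $p\geq 0$ on $\Lambda_+$ (as $p(e)>0$ and $p(y) = p(e)\prod_i \lambda_i^{p,e}(y)$) with $p(y) = 0$, the directional derivative must be nonnegative. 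Applying this with $v=u$ bounds the numerator from below; with $v=e$, combined with the fact that $t_i$ is a simple root (so $D_e p(y_i) \neq 0$), it gives strict positivity of the denominator. Hence $t_i'(s)\geq 0$ on a dense set of $s$, and by continuity everywhere. The main obstacle is the analyticity step, which rests on a nontrivial perturbation result for real-rooted polynomial families; the rest is standard convex geometry of hyperbolicity cones.
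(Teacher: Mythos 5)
The paper does not prove this lemma; it is imported verbatim from Atiyah--Bott--G\aa rding, so there is no internal proof to compare against and your argument has to stand on its own. The factorization step is correct, and resting the analyticity of the roots on Rellich's theorem (a one-parameter family of monic real-rooted polynomials with analytic coefficients admits analytic root branches, proved via Puiseux expansions plus reality) is a legitimate way to dispose of that part -- it is essentially how the original source argues.

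The monotonicity argument, however, has a genuine error at its core. You set $y_i = x - t_i(s)e + su$ and assert that $y_i \in \partial\Lambda_+$ ``since $\lambda_i^{p,e}(y_i)=0$.'' Having one zero eigenvalue does not place a point in $\Lambda_+$: the eigenvalues of $y_i$ with respect to $(p,e)$ are $t_j(s)-t_i(s)$ for $j=1,\dots,\textup{deg}(p)$, so $y_i\in\Lambda_+$ only for the branch attaining the \emph{smallest} root. For every other branch $y_i\notin\Lambda_+$, the inequality $p\geq 0$ near $y_i$ is unavailable, and your sign claims fail outright: with $p(z)=z_1z_2$, $e=(1,1)$, $y=(0,-1)$ (a simple zero of $p$ outside $\Lambda_+=\RR^2_+$) and $u=(1,0)\in\Lambda_+$, both $D_up(y)=-1$ and $D_ep(y)=-1$ are negative. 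The conclusion $t_i'\geq 0$ survives only because the two derivatives always carry the \emph{same} sign at a simple root; the correct justification is inertia invariance, namely that $\textup{sign}(D_vp(y_i))=(-1)^{n_-(y_i)}$ where $n_-(y_i)$ is the number of negative hyperbolic eigenvalues of $y_i$, a quantity independent of the choice of hyperbolicity direction $v\in\Lambda_{++}(p,e)$ (with $v=u\in\partial\Lambda_+$ handled by taking limits along $u+\epsilon e$). A second, lesser, gap: your reduction to the dense set of $s$ where all roots are simple breaks down when two analytic branches coincide identically, and the parenthetical claim that this only happens when $p$ has a repeated factor is false -- take $p=\det$ on $\Sym^2$, $x=0$, $u=e=I$, which gives $p(x-te+su)=(s-t)^2$ with $p$ irreducible. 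Since two analytic branches either agree identically or meet in a discrete set, this case can be repaired (e.g.\ by passing to $D_ep$, for which the common branch becomes a simple root, and inducting on the degree), but as written the argument does not cover it.
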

The roots of the polynomial $t \mapsto p(x-te+su)$ are the eigenvalues of $x+su$, and therefore each $t_i(s;x,u)$ in the above lemma is an eigenvalue of $x+su$.  The assertion that the functions $s\mapsto t_i(s;x,u)$ are real analytic functions of $s$ corresponds to the eigenvalues of $x+su$ being analytic functions of $s$, and the nonnegativity of each of the derivatives $t'_{i}(s;x,u)$ (when $u \in \Lambda_+(p,e)$) corresponds to each of the eigenvalues of $x+su$ being non-decreasing functions of $s$.
This result is useful because it allows us to understand localization from the point of view of eigenvalues.
\begin{lemma}
\label{lem:loc-eig}
Suppose $p$ is hyperbolic with respect to $e$ and fix some $x \in \Lambda_+(p,e)$.  Letting $m = \textup{mult}_p(x)$ we have that
\begin{equation}
	\label{eq:loc-eig}
	\loc{p}{x}(y-te) = p(e)\prod_{i=1}^{\textup{deg}(p)-m} \lambda_i(x) \prod_{j=\textup{deg}(p)-m+1}^{\textup{deg}(p)} (t'_j(0;x,y) - t).
\end{equation}
\end{lemma}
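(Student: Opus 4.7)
The plan is to compute $\loc{p}{x}(y-te)$ directly from the first definition, $\loc{p}{x}(z) = \lim_{\lambda\to\infty}\lambda^{m} p(x+\lambda^{-1}z)$ with $m = \textup{mult}_p(x)$, and then apply Lemma~\ref{lem:abg} to factor the inner polynomial. Specifically, setting $z = y - te$ we get
\[
\loc{p}{x}(y-te) = \lim_{\lambda\to\infty} \lambda^{m}\, p\bigl(x - \lambda^{-1} t\, e + \lambda^{-1} y\bigr),
\]
and the inner expression fits the form appearing in Lemma~\ref{lem:abg} with the substitutions $s \leftarrow \lambda^{-1}$, $u \leftarrow y$, and $t \leftarrow \lambda^{-1} t$. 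Consequently
\[
p\bigl(x - \lambda^{-1} t\, e + \lambda^{-1} y\bigr) = p(e) \prod_{i=1}^{\textup{deg}(p)} \bigl(t_i(\lambda^{-1}; x, y) - \lambda^{-1} t\bigr).
\]

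Next, I would split the product according to whether the zeroth-order value $t_i(0; x, y) = \lambda_i(x)$ is nonzero or zero. Since $x \in \Lambda_+(p,e)$ and $\textup{mult}_p(x) = m$, precisely $m$ of the eigenvalues $\lambda_i(x)$ vanish; using the convention that these correspond to the indices $j = \textup{deg}(p)-m+1, \ldots, \textup{deg}(p)$, I split the product as
\[
\prod_{i=1}^{\textup{deg}(p)-m}\bigl(t_i(\lambda^{-1};x,y) - \lambda^{-1} t\bigr) \cdot \prod_{j=\textup{deg}(p)-m+1}^{\textup{deg}(p)}\bigl(t_j(\lambda^{-1};x,y) - \lambda^{-1} t\bigr).
\]
For indices $i \leq \textup{deg}(p)-m$, the factor tends to $\lambda_i(x)$ as $\lambda \to \infty$. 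For indices $j$ with $\lambda_j(x) = 0$, the real-analyticity guaranteed by Lemma~\ref{lem:abg} gives the expansion $t_j(\lambda^{-1};x,y) = \lambda^{-1} t_j'(0;x,y) + O(\lambda^{-2})$, so $\lambda\bigl(t_j(\lambda^{-1};x,y) - \lambda^{-1}t\bigr) \to t_j'(0;x,y) - t$. Distributing the $\lambda^{m}$ prefactor, one $\lambda$ across each of the $m$ vanishing-eigenvalue factors, the limit evaluates to exactly the right-hand side of \eqref{eq:loc-eig}.

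The two things requiring slight care are: (i) making sure the indexing of the $t_i(s;x,y)$ in Lemma~\ref{lem:abg} can be chosen so that the zero eigenvalues are grouped at the tail — this is fine because the lemma only produces an unordered list of real-analytic roots, and we are free to permute them since the identity $\loc{p}{x}(y-te) = \frac{1}{m!}D_y^{m} p(x - te)$ (from Lemma~\ref{lem:loc-alt}) is symmetric in the roots; and (ii) verifying that the limit of the product equals the product of the limits, which follows because only finitely many factors are involved and each factor has a finite limit. I do not expect a genuine obstacle — the proof is essentially a direct limit computation combining the definition of $\loc{p}{x}$ with the Puiseux/real-analytic expansion of the hyperbolic eigenvalues about $s = 0$ from Lemma~\ref{lem:abg}.
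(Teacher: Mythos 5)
Your proof is correct and follows essentially the same route as the paper: substitute $s=\lambda^{-1}$, $u=y$, $t\mapsto \lambda^{-1}t$ into the factorization of Lemma~\ref{lem:abg}, distribute one factor of $\lambda$ onto each of the $m$ roots that vanish at $s=0$, and pass to the limit using the analytic expansion $t_j(\lambda^{-1};x,y)=\lambda^{-1}t_j'(0;x,y)+O(\lambda^{-2})$. Your indexing worry is moot, since the eigenvalues are ordered decreasingly and are all nonnegative for $x\in\Lambda_+(p,e)$, so the zero ones automatically occupy the tail indices.
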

\begin{proof}
If $x \in \Lambda_+(p,e)$ has multiplicity $m := \textup{mult}_{p}(x)$ then the functions $t_i$ in the factorization of
Lemma~\ref{lem:abg} have the property that $t_{1}(0;x,u) = \cdots = t_{\textup{deg}(p)-m}(0;x,u) > 0$ and that $t_{\textup{deg}(p)-m+1}(0;x,u), \ldots, t_{\textup{deg}(p)}(0;x,u) \allowbreak = 0$ by virtue of $t_i(0;x,u)$ being eigenvalues of $x$. Using the factorization of Lemma~\ref{lem:abg}, we see that
\begin{multline}
	 \lambda^{\textup{mult}_p(x)}p(x+\lambda^{-1}(y - te)) =\\
	p(e) \prod_{i=1}^{\textup{deg}(p)-m} (t_{i}(\lambda^{-1};x,y) - \lambda^{-1}t) \prod_{j=\textup{deg}(p)-m+1}^{\textup{deg}(p)}(\lambda t_j(\lambda^{-1};x,y)-t).
	\label{eq:pre-loc-eig}
\end{multline}
Expanding $t_i(\lambda^{-1};x,y)$ about $t_i(0;x,y)$, gives $t_i(\lambda^{-1};x,y) = \lambda_i(x) + \lambda^{-1}t'(0;x,y) + O(\lambda^{-2})$. We obtain~\eqref{eq:loc-eig} by taking
the limit as $\lambda\rightarrow \infty$.
\end{proof}
We are interested in the localization of a hyperbolic polynomial at a point
because it turns out to be the algebraic analogue of the geometric operation of taking the
tangent cone to a hyperbolicity cone at a point. Although this is probably well-known, we have included a proof
because we had difficulty finding an explicit statement of this type in the literature.

We now show that localization at $x$ is the algebraic analog of the tangent cone to the hyperbolicity cone at $x$.
\begin{theorem}
\label{thm:hyp-tangent}
If $p$ is hyperbolic with respect to $e$ and $x\in \Lambda_+(p,e)$ then
\begin{enumerate}
	\item $\loc{p}{x}$ is hyperbolic with respect to $e$; and
	\item $\Lambda_+(\loc{p}{x},e) = \overline{\cfd_{\Lambda_+(p,e)}(x)}$ is the tangent cone of $\Lambda_+(p,e)$ at $x$.
\end{enumerate}
\end{theorem}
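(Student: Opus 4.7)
The plan is to derive both parts from Lemma~\ref{lem:loc-eig}, which already gives an explicit factorization of $\loc{p}{x}(y-te)$ as a polynomial in $t$ whose roots are the derivatives $t_j'(0;x,y)$ of the ``vanishing'' eigenvalue curves $s\mapsto t_j(s;x,y)$ at $s=0$.

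For part~(1), I would read hyperbolicity of $\loc{p}{x}$ with respect to $e$ directly off the factorization in~\eqref{eq:loc-eig}. Set $m = \textup{mult}_p(x)$. The right-hand side of~\eqref{eq:loc-eig} is, as a function of $t$, a $t$-independent constant times a product of $m$ real linear factors $t_j'(0;x,y)-t$, which exhibit all $m$ roots of the degree-$m$ polynomial $t\mapsto \loc{p}{x}(te-y)$ and show they are real for every $y$. Putting $y=0$ and $t=-1$, and observing that $s\mapsto t_j(s;x,0)$ is constant (so $t_j'(0;x,0)=0$), yields
\[ \loc{p}{x}(e) = p(e)\prod_{i=1}^{\deg(p)-m}\lambda_i(x) > 0,\]
since $\lambda_1(x),\ldots,\lambda_{\deg(p)-m}(x)$ are the nonzero (hence positive) eigenvalues of $x\in \Lambda_+(p,e)$. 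Along the way, this identifies $\Lambda_+(\loc{p}{x},e) = \{y : t_j'(0;x,y) \geq 0 \text{ for all } j\in\{\deg(p)-m+1,\ldots,\deg(p)\}\}$, which is what I will match against the tangent cone in part~(2).

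For the inclusion $\overline{\cfd_{\Lambda_+(p,e)}(x)} \subseteq \Lambda_+(\loc{p}{x},e)$, I would take $y\in \cfd_{\Lambda_+(p,e)}(x)$ and use convexity of $\Lambda_+(p,e)$ to conclude that $x+sy\in \Lambda_+(p,e)$ for $s$ in some interval $[0,\epsilon]$, so each eigenvalue curve $t_j(s;x,y)$ is nonnegative on $[0,\epsilon]$. For indices $j$ with $t_j(0;x,y)=0$, the real-analyticity supplied by Lemma~\ref{lem:abg} forces $t_j'(0;x,y)\geq 0$, placing $y$ in $\Lambda_+(\loc{p}{x},e)$; closedness of the latter then extends to the closure. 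For the reverse inclusion, I would use a perturbation: given $y\in \Lambda_+(\loc{p}{x},e)$ and $\delta>0$, set $y_\delta = y+\delta e$. The identity $p(x-te+s(y+\delta e)) = p(x-(t-s\delta)e+sy)$ gives $t_j(s;x,y_\delta) = t_j(s;x,y) + s\delta$, so $t_j'(0;x,y_\delta) = t_j'(0;x,y) + \delta > 0$ at each $j$ where $t_j(0;x,y_\delta)=0$, while the positive eigenvalues of $x$ remain positive for small $s$. Hence $x+sy_\delta \in \Lambda_+(p,e)$ for small $s>0$, so $y_\delta\in \cfd_{\Lambda_+(p,e)}(x)$; sending $\delta\to 0$ places $y$ in the closure.

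I expect the forward inclusion to be the most delicate step, because one must convert nonnegativity of $t_j(s;x,y)$ on an interval into nonnegativity of its derivative at $s=0$ without the hypothesis $y\in \Lambda_+(p,e)$ that would license the monotonicity clause of Lemma~\ref{lem:abg}; real-analyticity of each $t_j(\cdot;x,y)$ is exactly what bridges this gap, since an analytic function vanishing at $0$ and nonnegative on $[0,\epsilon]$ has nonnegative first nonzero Taylor coefficient and hence nonnegative derivative at $0$.
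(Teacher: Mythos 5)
Your proof is correct, and it leans on the same key ingredient as the paper, namely the eigenvalue factorization of Lemma~\ref{lem:loc-eig}. Part~(1) and the inclusion $\Lambda_+(\loc{p}{x},e)\subseteq\overline{\cfd_{\Lambda_+(p,e)}(x)}$ match the paper's argument in substance: the paper works with an arbitrary point $z$ of $\Lambda_{++}(\loc{p}{x},e)$ and shows $x+z/\lambda_0\in\Lambda_{++}(p,e)$ for large $\lambda_0$ before taking closures, while you realize the interior approximant explicitly as $y_\delta=y+\delta e$ and let $\delta\to 0$; these are the same move. The genuine divergence is in the inclusion $\overline{\cfd_{\Lambda_+(p,e)}(x)}\subseteq\Lambda_+(\loc{p}{x},e)$. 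The paper argues via the Descartes'-rule description of a hyperbolicity cone: $x+\lambda^{-1}z\in\Lambda_+(p,e)$ means $t\mapsto\lambda^{\textup{mult}_p(x)}p(x+\lambda^{-1}(z+te))$ has nonnegative coefficients, and coefficient-wise continuity passes this to the limit polynomial $\loc{p}{x}(z+te)$. You instead track the analytic eigenvalue branches $t_j(s;x,y)$ of $x+sy$ and observe that a branch vanishing at $s=0$ and nonnegative on $[0,\epsilon]$ has nonnegative derivative at $0$. Both are sound; your route correctly flags that only the analyticity clause of Lemma~\ref{lem:abg} is needed (the monotonicity clause requires $y\in\Lambda_+(p,e)$, which is unavailable here), whereas the paper's route sidesteps any discussion of branches for directions outside the cone at the price of invoking the coefficient characterization of $\Lambda_+$. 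In fact your delicate step needs even less than analyticity: since $t_j(0;x,y)=0$ and $t_j(s;x,y)\geq 0$ on $[0,\epsilon]$, already $t_j'(0;x,y)=\lim_{s\to 0^+}t_j(s;x,y)/s\geq 0$ by differentiability alone.
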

\begin{proof}
The fact that the localization is hyperbolic with respect to $e$ is exactly~\cite[Lemma 3.42]{atiyah1970lacunas},
and also follows immediately from Lemma~\ref{lem:loc-eig} and the fact that the $t_i'(0;x,y)$ are always real.

For the second part, we first show that the hyperbolicity cone of the localization at $x$ is contained in the tangent cone of $\Lambda_+(p,e)$ at $x$. Let $z\in \Lambda_{++}(\loc{p}{x},e)$ be in the interior of the hyperbolicity cone of the localization at $x$. Then,
from~\eqref{eq:loc-eig} we know that $t_{j}'(0;x,z) > 0$ for $j = \textup{deg}(p) - \textup{mult}_p(x) + 1, \ldots, \textup{deg}(p)$. Furthermore, since $x\in \Lambda_+(p,e)$
we know that $t_{i}(0;x,z) = \lambda_i(x)>0$ for $i=1, \ldots, \textup{mult}_p(x)$. From~\eqref{eq:pre-loc-eig} we know that
the roots of $t\mapsto \lambda^{\textup{mult}_p(x)}p(x+\lambda^{-1}(z - te))$ are $\lambda t_i(\lambda^{-1};x,z) = \lambda \lambda_i(x) + t_i'(0;x,z) + O(\lambda^{-1})$ for $i=1, \ldots, \textup{deg}(p)$. As such, there exists a sufficiently large positive $\lambda_0$ such that if
$\lambda\geq \lambda_0$ then all of these roots are positive.
Hence we have that $x+z/\lambda_0 \in \Lambda_{++}(p,e)$ and therefore $z\in \cfd_{\Lambda_+(p,e)}(x)$, the cone of feasible directions with respect to $\Lambda_+(p,e)$ at $x$.
We have shown that $\Lambda_{++}(\loc{p}{x},e)\subseteq \cfd_{\Lambda_+(p,e)}(x)$. Taking closures shows that
$\Lambda_+(\loc{p}{x},e)$ is contained in the tangent cone of $\Lambda_+(p,e)$ at $x$.

For the reverse inclusion, suppose that $z\in \cfd_{\Lambda_+(p,e)}(x)$. In other words, there exists a
sufficiently large positive $\lambda_0$ such that $x+\lambda^{-1}z \in \Lambda_+(p,e)$ for all $\lambda \geq \lambda_0$. Then
\[ t\mapsto \lambda^{\textup{mult}_{p}(x)}p(x+\lambda^{-1}(z+te))\]
has nonnegative coefficients for all $\lambda \geq \lambda_0$. By continuity of the coefficients as functions of $\lambda$,
it follows that
\[ t\mapsto \lim_{\lambda\rightarrow \infty}\lambda^{\textup{mult}_p(x)}p(x+\lambda^{-1}(z+te)) = \loc{p}{x}(z+te)\]
has non-negative coefficients. Consequently, we have that $z\in \Lambda_+(\loc{p}{x},e)$ and so the cone of feasible directions is
contained in $\Lambda_+(\loc{p}{x},e)$. Taking the closure shows that the tangent cone is contained in $\Lambda_+(\loc{p}{x},e)$,
completing the proof.
\end{proof}
\begin{example}[{Example~\ref{eg:psd-loc} continued}]
\label{eg:psd-tangent-hyp}
Suppose that $p(X) = \det(X)$ where $X$ is a $d\times d$ symmetric matrix of indeterminates, $e = I$, and
$X = \left[\begin{smallmatrix} Z & 0\\0 & 0\end{smallmatrix}\right]$ where
$Z$ is $k\times k$ and positive definite. The hyperbolicity cone of $\loc{p}{X}$ is
\[ \Lambda_+(\loc{p}{X},I) = \left\{\begin{bmatrix}Y_{11} & Y_{12}\\Y_{12}^T & Y_{22}\end{bmatrix} \;:\; Y_{22} \psd 0\right\}\]
which coincides with the tangent cone to the positive semidefinite cone at $X$.
\end{example}
The main technical
result of this section, a fairly immediate corollary of Lemma~\ref{lem:loc-alt},
is that localization and taking derivatives commute.
\begin{theorem}
	\label{thm:tangent-derivative}
	If $p$ is hyperbolic with respect to $e$ and $x \in \Lambda_+(p,e)$ with $\textup{mult}_p(x) \geq 1$, then $\Lambda_+(D_{\tilde{e}}\loc{p}{x},e) = \Lambda_+(\loc{D_{\tilde{e}}p}{x},e)$ for any $\tilde{e} \in \Lambda_{++}(p,e)$.
\end{theorem}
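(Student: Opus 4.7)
The plan is to show that the two polynomials $D_{\tilde{e}}\loc{p}{x}$ and $\loc{D_{\tilde{e}}p}{x}$ coincide as polynomials in $y$, from which equality of the associated hyperbolicity cones is immediate. The essential ingredient is Lemma~\ref{lem:loc-alt}, which expresses a localization as a single iterated directional derivative evaluated at $x$, making it amenable to direct algebraic manipulation.

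Setting $m := \textup{mult}_p(x) \geq 1$, I would first verify that $\textup{mult}_{D_{\tilde{e}}p}(x) = m - 1$. Differentiating the factorization $p(x - te + s\tilde{e}) = p(e) \prod_{i=1}^{d}(t_i(s; x, \tilde{e}) - t)$ from Lemma~\ref{lem:abg} with respect to $s$ at $s = 0$ yields $D_{\tilde{e}}p(x - te) = p(e) \sum_j t'_j(0; x, \tilde{e}) \prod_{i \neq j}(\lambda_i(x) - t)$. Because $\tilde{e} \in \Lambda_{++}(p, e)$, each $t'_j(0; x, \tilde{e}) > 0$; and since $m$ of the $\lambda_i(x)$ vanish, each term in the sum is divisible by $t^{m-1}$ while the coefficient of $t^{m-1}$ is strictly nonzero.

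Lemma~\ref{lem:loc-alt}, applied to both $p$ and $D_{\tilde{e}}p$, then gives
\begin{align*}
\loc{p}{x}(y) &= \tfrac{1}{m!} D_y^m p(x), \\
\loc{D_{\tilde{e}}p}{x}(y) &= \tfrac{1}{(m-1)!} D_y^{m-1}(D_{\tilde{e}}p)(x) = \tfrac{1}{(m-1)!} D_{\tilde{e}} D_y^{m-1} p(x),
\end{align*}
where the last equality uses that the constant-coefficient operators $D_y$ and $D_{\tilde{e}}$ commute. It remains to show that $D_{\tilde{e}} \loc{p}{x}(y)$ equals the same expression. Setting $f(s, t) := p(x + ty + st\tilde{e})$ gives $m! \cdot \loc{p}{x}(y + s\tilde{e}) = \partial_t^m f(s,t)|_{t=0}$, and hence $m! \cdot D_{\tilde{e}} \loc{p}{x}(y) = \partial_s \partial_t^m f|_{s=t=0}$. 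The chain rule yields $\partial_s f(s,t)|_{s=0} = t \cdot D_{\tilde{e}} p(x + ty)$, and Leibniz's rule applied to $\partial_t^m[t \cdot D_{\tilde{e}}p(x+ty)]$ at $t=0$ produces $m \cdot D_y^{m-1}(D_{\tilde{e}} p)(x)$, since all higher derivatives of the factor $t$ vanish. Therefore $D_{\tilde{e}} \loc{p}{x}(y) = \tfrac{1}{(m-1)!} D_{\tilde{e}} D_y^{m-1} p(x) = \loc{D_{\tilde{e}}p}{x}(y)$.

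The bulk of the argument is a calculus bookkeeping exercise, and the main subtlety I anticipate is carefully interchanging partial derivatives in the mixed variables $s$ and $t$ and accurately tracking factorial constants. A minor additional check, needed for the left-hand side of the claimed equality to be well-defined as a derivative relaxation, is that $\tilde{e} \in \Lambda_{++}(\loc{p}{x}, e)$. This follows because Theorem~\ref{thm:hyp-tangent} gives $\Lambda_+(p, e) \subseteq \Lambda_+(\loc{p}{x}, e)$, so the open set $\Lambda_{++}(p, e)$ lies in the interior of the larger hyperbolicity cone.
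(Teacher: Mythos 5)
Your proof is correct and follows essentially the same route as the paper: both reduce the statement to the polynomial identity $D_{\tilde{e}}\loc{p}{x} = \loc{D_{\tilde{e}}p}{x}$ via Lemma~\ref{lem:loc-alt} together with the symmetry of iterated directional derivatives, your $f(s,t)$/Leibniz computation being a repackaging of the paper's binomial expansion of $D_{y+t\tilde{e}}^{m}p(x)$. You additionally verify that $\textup{mult}_{D_{\tilde{e}}p}(x)=m-1$, which the paper merely asserts; the only detail to add there is that Lemma~\ref{lem:abg} gives only $t_j'\geq 0$, so the strict positivity you invoke for $\tilde{e}\in\Lambda_{++}(p,e)$ requires the standard shift argument $t_j(s;x,\tilde{e})=t_j\bigl(s;x,\tilde{e}-\lambda_{\min}(\tilde{e})e\bigr)+s\,\lambda_{\min}(\tilde{e})$ (though for your purposes it in fact suffices that the sum of the relevant $t_j'(0;x,\tilde{e})$ is positive).
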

\begin{proof}
	Let $m = \textup{mult}_p(x) \geq 1$.
	On the one hand $\loc{p}{x}(y) = \frac{1}{{m}{!}} {D_y^m} p(x)$. Differentiating in the direction ${\tilde{e}}$ gives
\begin{equation*}
\begin{aligned}
D_{\tilde{e}}\loc{p}{x}(y) &= \left.\frac{d}{dt}\frac{1}{{m}{!}} {D_{y+t\tilde{e}}^m} p(x)\right|_{t=0} \\ &= { \frac{d}{dt}\frac{1}{m{!}} \left[\sum_{i=0}^m {m \choose i} t^i D^i_{\tilde{e}} D^{m-i}_y p(x) \right] \Bigg|_{t=0}} \\ &= \frac{1}{({m}-1){!}} {D_{\tilde{e}}} {D^{m-1}_y} p(x).
\end{aligned}
\end{equation*}
	where we have used the fact that ${D_{y^{(1)}} \cdots D_{y^{(m)}}} p(x)$ is invariant under permutations of {$y^{(1)},\ldots, y^{(m)}$}.
	On the other hand $x$ has multiplicity $m-1\geq 0$ with respect to $D_{\tilde{e}}p$. As such
	\[ \loc{D_{\tilde{e}}p}{x}(y) = \frac{1}{({m}-1){!}} {D^{m-1}_y} D_{\tilde{e}} p(x).\]	
	We have shown that $D_{\tilde{e}}\loc{p}{x} = \loc{D_{\tilde{e}} p}{x}$, from which the result directly follows.
\end{proof}
The fact that localization and taking derivatives commute tells us that {the convex tangent space of a hyperbolicity cone is exactly the same as the convex tangent space of its derivative relaxation at points of high enough multiplicity.}
\begin{corollary}
	\label{cor:cl-deriv}
	If $p$ is hyperbolic with respect to $e$, $x\in \Lambda_+$, and $\textup{mult}_p(x) \geq 3$, then
	$\cl_{\Lambda_+'}(x) = \cl_{\Lambda_+}(x)$ {for any derivative relaxation $\Lambda_+' = \Lambda_+(D_{\tilde{e}}p,e)$ for $\tilde{e} \in \Lambda_{++}(p,e)$}.
\end{corollary}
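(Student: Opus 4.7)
The plan is to translate the convex tangent spaces on both sides into lineality spaces of hyperbolicity cones built from localizations of $p$, after which the desired equality follows from Corollary~\ref{cor:lin-d3}. First, I would apply Theorem~\ref{thm:hyp-tangent} twice---once to $p$ at $x$, and once to $D_{\tilde{e}}p$ at $x$ (using that $x\in \Lambda_+(p,e)\subseteq \Lambda_+(D_{\tilde{e}}p,e)$)---to conclude $\overline{\cfd_{\Lambda_+}(x)} = \Lambda_+(\loc{p}{x},e)$ and $\overline{\cfd_{\Lambda_+'}(x)} = \Lambda_+(\loc{D_{\tilde{e}}p}{x},e)$. Intersecting each with its negative gives
\begin{equation*}
\cl_{\Lambda_+}(x) = \Lambda_+(\loc{p}{x},e)\cap \bigl(-\Lambda_+(\loc{p}{x},e)\bigr), \qquad \cl_{\Lambda_+'}(x) = \Lambda_+(\loc{D_{\tilde{e}}p}{x},e)\cap \bigl(-\Lambda_+(\loc{D_{\tilde{e}}p}{x},e)\bigr).
\end{equation*}
Invoking Theorem~\ref{thm:tangent-derivative} to rewrite $\Lambda_+(\loc{D_{\tilde{e}}p}{x},e) = \Lambda_+(D_{\tilde{e}}\loc{p}{x},e)$ then reduces the claim to showing that the lineality spaces of $\Lambda_+(\loc{p}{x},e)$ and of its derivative relaxation $\Lambda_+(D_{\tilde{e}}\loc{p}{x},e)$ coincide.

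This is precisely the content of Corollary~\ref{cor:lin-d3} applied to the polynomial $q := \loc{p}{x}$, which is hyperbolic with respect to $e$ by Theorem~\ref{thm:hyp-tangent} and has degree $\textup{mult}_p(x)\geq 3$ by hypothesis. The one precondition to verify before using that corollary is that $\tilde{e}\in \Lambda_{++}(q,e)$, so that $D_{\tilde{e}}q$ is a legitimate derivative relaxation of $\Lambda_+(q,e)$. I would establish this via Lemma~\ref{lem:loc-eig}: its factorization shows that the hyperbolic eigenvalues of $\tilde{e}$ with respect to $q$ are, up to positive scaling by the nonzero $\lambda_i(x)$, the quantities $t_j'(0;x,\tilde{e})$ for the $\textup{mult}_p(x)$ indices $j$ with $\lambda_j(x)=0$. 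Since $x + \lambda^{-1}\tilde{e}\in \Lambda_{++}(p,e)$ for every $\lambda>0$, the eigenvalue expansion $\lambda\, t_j(\lambda^{-1};x,\tilde{e}) = t_j'(0;x,\tilde{e}) + O(\lambda^{-1})$ used in the proof of Theorem~\ref{thm:hyp-tangent} must be strictly positive for all sufficiently large $\lambda$, forcing $t_j'(0;x,\tilde{e})>0$. Hence all hyperbolic eigenvalues of $\tilde{e}$ with respect to $q$ are strictly positive, as required.

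The argument is essentially a chain of named results, so there is no serious conceptual obstacle. The only point requiring a little care is the bookkeeping check that $\tilde{e}\in \Lambda_{++}(\loc{p}{x},e)$, which is what enables the application of Corollary~\ref{cor:lin-d3} to the localized polynomial; once that is in hand, the three-step identification via Theorems~\ref{thm:hyp-tangent} and~\ref{thm:tangent-derivative} together with Corollary~\ref{cor:lin-d3} immediately yields $\cl_{\Lambda_+'}(x) = \cl_{\Lambda_+}(x)$.
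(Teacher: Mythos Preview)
Your proof follows the paper's approach exactly: identify the two convex tangent spaces with the lineality spaces of $\Lambda_+(\loc{p}{x},e)$ and $\Lambda_+(\loc{D_{\tilde e}p}{x},e)$ via Theorem~\ref{thm:hyp-tangent}, rewrite the latter as $\Lambda_+(D_{\tilde e}\loc{p}{x},e)$ via Theorem~\ref{thm:tangent-derivative}, and then apply Corollary~\ref{cor:lin-d3} to $q=\loc{p}{x}$. You are in fact more careful than the paper in flagging that Corollary~\ref{cor:lin-d3} requires $\tilde e\in\Lambda_{++}(q,e)$, a hypothesis the paper uses silently.

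Your specific verification of that hypothesis, however, has a small gap: from $\lambda\,t_j(\lambda^{-1};x,\tilde e)=t_j'(0;x,\tilde e)+O(\lambda^{-1})$ being strictly positive for all large $\lambda$ you can only conclude $t_j'(0;x,\tilde e)\ge 0$, not strict positivity (consider $t_j(s)=s^2$, which is analytic, vanishes at $0$, and is strictly positive for $s>0$). A cleaner argument is available: since $\tilde e\in\Lambda_{++}(p,e)$ and $x\in\Lambda_+$, one has $x+\tilde e\in\Lambda_{++}$, and hence $x+(\tilde e+v)\in\Lambda_{++}$ for every sufficiently small $v$, giving $\tilde e+v\in\cfd_{\Lambda_+}(x)$. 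Thus $\tilde e$ lies in the interior of the tangent cone, which is $\Lambda_{++}(\loc{p}{x},e)$ by Theorem~\ref{thm:hyp-tangent}. With this correction your argument is complete and coincides with the paper's.
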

\begin{proof}
From Theorem~\ref{thm:hyp-tangent}, the convex tangent space of $\Lambda_+$ at $x$ is the lineality space of $\Lambda_+(\loc{p}{x},e)$.
Similarly, the convex tangent space of $\Lambda_+'$ at $x$ is the lineality space of $\Lambda_+(\loc{D_{\tilde{e}}p}{x},e)$,
which is the lineality space of $\Lambda_+(D_{\tilde{e}}\loc{p}{x},e)$ from Theorem~\ref{thm:tangent-derivative}.
Since $\textup{deg}(\loc{p}{x}) = \textup{mult}_p(x) \geq 3$ Corollary~\ref{cor:lin-d3} tells us that the lineality space of
$\Lambda_+(\loc{p}{x},e)$ is equal to the lineality space of $\Lambda_+(D_{\tilde{e}}\loc{p}{x},e)$, completing the proof.
\end{proof}

\subsection{Derivative Relaxations of Terracini Convex Hyperbolicity Cones}
\label{sec:deriv-kterracini}
In this section we state and prove two results related to Terracini convexity properties of derivative relaxations
of hyperbolicity cones. The first, Proposition~\ref{prop:dt}, gives a
sufficient condition under which any derivative relaxation
of a  hyperbolicity cone that is $k$-Terracini convex also has non-trivial Terracini convexity properties. It is,
\emph{a priori}, unclear whether the hypotheses of Proposition~\ref{prop:dt} hold for any interesting examples.
In the main result of this section (Theorem~\ref{thm:rk1-dr}), we show that if $\Lambda_+$ is a hyperbolicity cone
that is Terracini convex and for which all of its extreme rays have hyperbolic rank one, then repeatedly taking derivative
relaxations produces new examples of hyperbolicity cones with Terracini convexity properties. Examples of hyperbolicity
cones to which Theorem~\ref{thm:rk1-dr} applies are the nonnegative orthant and the positive semidefinite cone, as well as
other examples such as the cone of $d\times d$ positive semidefinite Hankel matrices.

\begin{proposition}
\label{prop:dt}
Suppose that $p$ {is hyperbolic with respect to $e$, the degree $\textup{deg}(p) \geq 3$}, and the associated {hyperbolicity} cone
$\Lambda_+$ is {pointed and} $k$-Terracini convex.
{If each collection} $x^{(1)},\ldots,x^{(k')}$ of $k'$ extreme rays of $\Lambda_+$ {satisfies one of the following conditions}:
\begin{itemize}
	\item there exists $j$ such that $\textup{mult}_p(x^{(j)}) \leq 2$, or
	\item $\textup{mult}_{p}\left(\sum_{i=1}^{k'}x^{(i)}\right) \geq 3$,
\end{itemize}
then {any derivative relaxation $\Lambda_+' = \Lambda_+(D_{\tilde{e}}p,e)$ for $\tilde{e} \in \Lambda_{++}(p,e)$} is $\min\{k,k'\}$-Terracini convex.
\end{proposition}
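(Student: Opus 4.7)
The plan is to verify $k''$-Terracini convexity of $\Lambda_+'$ for every $k'' \leq \min\{k,k'\}$, i.e., to fix a collection $x^{(1)}, \dots, x^{(k'')}$ of extreme rays of $\Lambda_+'$ and show $\cl_{\Lambda_+'}(\sum_i x^{(i)}) \subseteq \sum_i \cl_{\Lambda_+'}(x^{(i)})$ (the other inclusion is automatic). Since $\textup{deg}(p) \geq 3$ and $\Lambda_+$ is pointed, Corollary~\ref{cor:lin-d3} ensures $\Lambda_+'$ is pointed, so the convex tangent spaces are well-posed. Corollary~\ref{cor:ext-der} partitions the $x^{(i)}$'s into two types: (a) those with $\textup{mult}_{D_{\tilde e}p}(x^{(i)}) = 1$ (smooth boundary points of $\Lambda_+'$), and (b) extreme rays of $\Lambda_+$ with $\textup{mult}_p(x^{(i)}) \geq 3$. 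The proof branches on whether all $x^{(i)}$ are of type (b) or at least one is of type (a).

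In the all-type-(b) branch, every $x^{(i)}$ is already an extreme ray of $\Lambda_+$, so the hypothesis of the proposition applies, after extending to a collection of size $k'$ by repeating rays if $k'' < k'$; this extension keeps every ray of type (b) and, by eigenvalue monotonicity (Lemma~\ref{lem:abg}), can only lower the multiplicity of the sum. Because no ray has $\textup{mult}_p \leq 2$, the first bullet of the hypothesis fails, forcing the second bullet: the extended sum has $\textup{mult}_p \geq 3$, whence the original sum does too. Corollary~\ref{cor:cl-deriv} then identifies $\cl_{\Lambda_+'}(x^{(i)}) = \cl_{\Lambda_+}(x^{(i)})$ for each $i$ and $\cl_{\Lambda_+'}(\sum_i x^{(i)}) = \cl_{\Lambda_+}(\sum_i x^{(i)})$, so the desired equality reduces to applying $k$-Terracini convexity of $\Lambda_+$ to the $k'' \leq k$ extreme rays $x^{(1)}, \dots, x^{(k'')}$ of $\Lambda_+$.

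In the other branch, pick a type-(a) index $j$; then $H := \cl_{\Lambda_+'}(x^{(j)})$ is a hyperplane, namely the kernel of a supporting functional $\ell \geq 0$ on $\Lambda_+'$ with $\ell(x^{(j)}) = 0$. Applying Lemma~\ref{lem:abg} to $D_{\tilde e}p$ with $\sum_{i \neq j} x^{(i)} \in \Lambda_+'$ as the perturbation direction shows each eigenvalue of $\sum_i x^{(i)}$ dominates the corresponding eigenvalue of $x^{(j)}$, so $\textup{mult}_{D_{\tilde e}p}(\sum_i x^{(i)}) \leq 1$. If this multiplicity is $0$, then $\sum_i x^{(i)}$ lies in the interior of $\Lambda_+'$, forcing $\ell(\sum_i x^{(i)}) > 0$; combined with $H \subseteq \sum_i \cl_{\Lambda_+'}(x^{(i)})$ and $\sum_i x^{(i)} \in \sum_i \cl_{\Lambda_+'}(x^{(i)})$, this yields $\sum_i \cl_{\Lambda_+'}(x^{(i)}) = \R^n = \cl_{\Lambda_+'}(\sum_i x^{(i)})$. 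If the multiplicity is $1$, then $H' := \cl_{\Lambda_+'}(\sum_i x^{(i)})$ is a hyperplane, and the chain $H \subseteq \sum_i \cl_{\Lambda_+'}(x^{(i)}) \subseteq H'$ --- with the last inclusion being the automatic one --- forces $H = H'$ by codimension comparison, so $\sum_i \cl_{\Lambda_+'}(x^{(i)}) = H' = \cl_{\Lambda_+'}(\sum_i x^{(i)})$.

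I expect the main obstacle to be the second subcase of the type-(a) branch, where one must argue that the supporting hyperplane at the sum is exactly the supporting hyperplane at a single smooth-type ray: the argument is clean once the multiplicity bound from Lemma~\ref{lem:abg} is in place, but it requires invoking the automatic inclusion to pin down the codimension. The remainder is bookkeeping that links Corollaries~\ref{cor:ext-der} and~\ref{cor:cl-deriv} with Lemma~\ref{lem:abg}.
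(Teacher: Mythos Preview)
Your proof is correct and follows the same two-case architecture as the paper: split on whether every $x^{(i)}$ has $\textup{mult}_{D_{\tilde e}p}(x^{(i)})\geq 2$ (your type-(b) branch, the paper's Case~2) or some $x^{(j)}$ has $\textup{mult}_{D_{\tilde e}p}(x^{(j)})=1$ (your type-(a) branch, the paper's Case~1). In the type-(b) branch your argument is essentially identical to the paper's, with the added care of explicitly padding out to $k'$ rays and invoking eigenvalue monotonicity to carry the multiplicity bound back down to the $k''$-ray sum; the paper glosses over this step with ``as $\ell\leq k'$''.

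The only substantive difference is in the type-(a) branch. The paper argues directly: either all the $x^{(i)}$ generate the same ray (trivial), or some $x^{(j')}$ generates a distinct ray, and since hyperbolicity cones are facially exposed the hyperplane $H=\cl_{\Lambda_+'}(x^{(j)})$ exposes exactly the ray through $x^{(j)}$, so $x^{(j')}\notin H$ and hence $\sum_i \cl_{\Lambda_+'}(x^{(i)})\supseteq H+\textup{span}(x^{(j')})=\R^n$. You instead bound $\textup{mult}_{D_{\tilde e}p}\big(\sum_i x^{(i)}\big)\leq 1$ via Lemma~\ref{lem:abg} and split into two subcases. Both arguments are short and valid; the paper's avoids the subcase split by appealing to facial exposedness, while yours avoids that appeal by using monotonicity a second time. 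In fact your multiplicity-one subcase can only occur when all $x^{(i)}$ generate the same ray (otherwise the paper's argument forces the sum into the interior), so that subcase is the trivial one in disguise---but your sandwich argument handles it correctly without needing to observe this.
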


{\emph{Remarks}: The case $\textup{deg}(p) = 1$ is vacuous as $\Lambda_+$ is a halfspace and Terracini convexity requires a cone to be pointed.  For similar reasons, the case $\textup{deg}(p) = 2$ is not interesting as $\textup{deg}(D_{\tilde{e}} p) = 1$ and $\Lambda_+'$ is a halfspace.}

\begin{proof}
Let $\ell = \min\{k,k'\}$ and let $x^{(1)},\ldots,x^{(\ell)}$ be extreme
rays of $\Lambda_+'$ {(note that $\Lambda_+'$ is pointed as $\Lambda_+$ is pointed and $\textup{deg}(p) \geq 3$)}.  {We consider next two cases based on the multiplicities of the $x^{(j)}$'s with respect to the derivative polynomial $D_{\tilde{e}} p$.}
\paragraph{Case 1:} Assume that there exists $j$ such that $\textup{mult}_{D_{\tilde{e}}p}(x^{(j)}) = 1$. In this case,
the localization of $D_{\tilde{e}}p$ at $x^{(j)}$ has degree one, {which implies that $\Lambda_+(\loc{D_{\tilde{e}} p}{x}, e)$ is a halfspace; therefore, from the second part of Theorem~\ref{thm:hyp-tangent}, the convex tangent space of $\Lambda_+'$ at $x^{(j)}$ is a subspace of codimension one.}
If all of $x^{(1)},\ldots,x^{(\ell)}$ generate the same
extreme ray, then so does $\sum_{i=1}^{\ell}x^{(i)}$. This means that all of the convex
tangent spaces {of $\Lambda_+'$} at these points are the same, so certainly
$\cl_{\Lambda_+'} \left( \sum_{i=1}^{\ell}x^{(i)} \right) = \sum_{i=1}^{\ell}\cl_{\Lambda_+'}(x)$.
Otherwise there is some $x^{(j')}$ that generates an extreme ray that is distinct from $x^{(j)}$. Since
	$\cl_{\Lambda_+'}(x^{(j)})\cap \Lambda_+'$ exposes the extreme ray generated by $x^{(j)}$ ({from Lemma~\ref{lem:face-cl-relation}, as hyperbolicity cones are facially exposed~\cite[Theorem 23]{renegar2006hyperbolic}}),
it follows that $x^{(j')}\notin \cl_{\Lambda_+'}(x^{(j)})$. Since the convex tangent space of $\Lambda_+'$ at
$x^{(j)}$ has codimension one and does not contain $x^{(j')}$,
\begin{align*}
	\sum_{i=1}^{\ell}\cl_{\Lambda_+'}(x^{(i)})
	& \supseteq \cl_{\Lambda_+'}(x^{(j')}) + \cl_{\Lambda_+'}(x^{(j)})\\
	& \supseteq \textup{span}(x^{(j')}) + \cl_{\Lambda_+'}(x^{(j)})
	 = \RR^n \supseteq \cl_{\Lambda_+'}\left(\sum_{i=1}^{\ell}x^{(i)}\right).
\end{align*}
\paragraph{Case 2:} Assume that  $\textup{mult}_{D_{\tilde{e}}p}(x^{(i)}) \geq 2$ for all $i=1,2,\ldots,\ell$.  {From Corollary~\ref{cor:ext-der}, it} follows
that $\textup{mult}_p(x^{(i)}) \geq 3$ for all $i=1,2,\ldots,\ell$ and that the $x^{(i)}$ all generate
extreme rays of $\Lambda_+$.  {As $\ell \leq k'$} and by our assumption on the extreme rays of $\Lambda_+$,
it follows that $\textup{mult}_{p}\left(\sum_{i=1}^{\ell}x^{(i)}\right) \geq 3$.
Then
\begin{equation}
	\label{eq:pf-dh}
	\cl_{\Lambda_+'}\left(\sum_{i=1}^{\ell}x^{(i)}\right) = \cl_{\Lambda_+}\left(\sum_{i=1}^{\ell}x^{(i)}\right) =
	\sum_{i=1}^{\ell}\cl_{\Lambda_+}(x^{(i)}) = \sum_{i=1}^{\ell}\cl_{\Lambda_+'}(x^{(i)}).
\end{equation}
The first and third equalities in~\eqref{eq:pf-dh} follow from Corollary~\ref{cor:cl-deriv} together with
the fact that $x^{(i)}$ (for each $i$) and $\sum_{i=1}^{\ell}x^{(i)}$ have multiplicity
at least three with respect to $p$. The second equality in~\eqref{eq:pf-dh} follows from the fact
that ${\Lambda_+}$ is $k$-Terracini convex and ${\ell \leq k}$.
\end{proof}
While Proposition~\ref{prop:dt} may appear rather technical, it is useful because it applies when we repeatedly
take derivative relaxations.  {Indeed, we have as an immediate consequence that for a hyperbolic polynomial $p$ with $\textup{deg}(p) = 3$, if $\Lambda_+$ is Terracini convex then so is any derivative relaxation $\Lambda_+'$; this follows from the observation that multiplicity of any generator of an extreme ray of $\Lambda_+$ is at most two.  For higher-degree hyperbolic polynomials, we present next the main result of this section which shows that for Terracini convex
hyperbolicity cones with all the extreme rays having hyperbolic rank one, the derivative relaxations yield new hyperbolicity cones with non-trivial Terracini convexity properties.}
Recall that the rank of a point with respect to a hyperbolic polynomial is the 
number of non-zero eigenvalues, and that a cone is Terracini convex if it is $k$-Terracini convex
for all $k$.
\begin{theorem}
\label{thm:rk1-dr}
Let $p$ be hyperbolic with respect to $e$ and let $d = \textup{deg}(p)$ {with $d > 3$}. Suppose that $\Lambda_+(p,e)$ is {pointed and} Terracini convex and
that whenever $x$ {generates} an extreme ray of $\Lambda_+(p,e)$ then $\textup{rank}_p(x) = 1$.
If $1\leq \ell \leq d-3$ is a positive integer and ${e^{(1)},\ldots,e^{(\ell)}} \in \Lambda_{++}(p,e)$ then
$\Lambda_+({D_{e^{(\ell)}}D_{e^{(\ell-1)}}\cdots D_{e^{(1)}}}p,e)$ is $(d-\ell-2)$-Terracini convex.
\end{theorem}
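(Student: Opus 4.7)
The plan is to induct on $\ell$, applying Proposition~\ref{prop:dt} at each step to pass from $\Lambda_+^{(i-1)} := \Lambda_+(p^{(i-1)},e)$ to $\Lambda_+^{(i)} := \Lambda_+(p^{(i)},e)$, where $p^{(0)} := p$ and $p^{(i)} := D_{e^{(i)}} p^{(i-1)}$. The inductive hypothesis is that $\Lambda_+^{(i)}$ is $(d-i-2)$-Terracini convex for each $1 \le i \le \ell$, with the base case given by the hypothesis that $\Lambda_+^{(0)} = \Lambda_+$ is Terracini convex. At step $i$, I apply Proposition~\ref{prop:dt} to $p^{(i-1)}$ with $k = d-i-1$ (supplied by the previous step, or by full Terracini convexity when $i=1$) and $k' = d-i-2$. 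The technical preliminaries---$\deg(p^{(i-1)}) = d-i+1 \ge 4$ since $i \le d-3$, and pointedness of $\Lambda_+^{(i-1)}$ by iterated application of Corollary~\ref{cor:lin-d3}---are immediate, and $\min(k,k') = d-i-2$ is precisely the target.

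The crux is verifying the hypothesis of Proposition~\ref{prop:dt} on collections of $k' = d-i-2$ generators of extreme rays of $\Lambda_+^{(i-1)}$. First I classify such extreme rays: iterating Corollary~\ref{cor:ext-der} backwards through $\Lambda_+^{(i-2)},\ldots,\Lambda_+$---and using that a derivative in an interior direction of hyperbolicity reduces hyperbolic multiplicity by exactly one whenever it is positive (a short calculation from Lemma~\ref{lem:abg}, consistent with Theorem~\ref{thm:mult3})---every generator $x$ of an extreme ray of $\Lambda_+^{(i-1)}$ is either \textbf{(a)} also a generator of an extreme ray of $\Lambda_+$, in which case $\textup{rank}_p(x) = 1$ by hypothesis and so $\textup{mult}_{p^{(i-1)}}(x) = d - i$, or \textbf{(b)} satisfies $\textup{mult}_{p^{(i-1)}}(x) = 1$. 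If some chosen $x^{(j)}$ is of type (b), the first option in Proposition~\ref{prop:dt} is met trivially, so the interesting case is when all $x^{(j)}$ are of type (a).

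In that case, by subadditivity of hyperbolic rank on $\Lambda_+$ (that is, $\textup{rank}_p(a+b) \le \textup{rank}_p(a) + \textup{rank}_p(b)$ for $a,b \in \Lambda_+$), we have $\textup{rank}_p(\sum_j x^{(j)}) \le k' = d - i - 2$, equivalently $\textup{mult}_p(\sum_j x^{(j)}) \ge i + 2$, and therefore $\textup{mult}_{p^{(i-1)}}(\sum_j x^{(j)}) \ge (i+2) - (i-1) = 3$, verifying the second option of Proposition~\ref{prop:dt} and closing the induction. The main obstacle I expect is justifying the subadditivity of hyperbolic rank, which is not stated earlier in the excerpt but is a standard property of hyperbolic polynomials; I would give a brief argument using the identity $e_k(\lambda_p(z)) = D^k_z p(e)/(k!\,p(e))$ together with the binomial expansion of $D^k_{a+b}p(e)$, noting that each mixed derivative $D^{\alpha}_a D^{\beta}_b p(e)$ must vanish when $\alpha > \textup{rank}_p(a)$ or $\beta > \textup{rank}_p(b)$, since the polynomial $\epsilon \mapsto p(e + \delta a + \epsilon b)$ has degree at most $\textup{rank}_p(b)$ in $\epsilon$ for every sufficiently small $\delta > 0$ and all higher coefficients must vanish identically in $\delta$.
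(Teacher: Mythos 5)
Your proof is correct and follows essentially the same route as the paper: induction on the number of derivatives, using Corollary~\ref{cor:ext-der} to reduce every extreme ray of the iterated derivative cone to either a multiplicity-one point or a rank-one extreme ray of $\Lambda_+$, then subadditivity of hyperbolic rank plus Theorem~\ref{thm:mult3} to verify the multiplicity-three hypothesis of Proposition~\ref{prop:dt}. The only difference is cosmetic: the paper cites the subadditivity of hyperbolic rank from the literature rather than proving it, and packages your backward iteration of Corollary~\ref{cor:ext-der} as a secondary inductive claim.
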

\begin{proof}
For brevity of notation, {we write $p^{(i)} = D_{e^{(i)}}\cdots D_{e^{(1)}}p$ and $\Lambda^{(i)}_+ = \Lambda_+(p^{(i)},e)$ for $i=1,\dots,\ell$.}

{It is helpful in our proof to use the observation that any $x$ that generates an extreme ray of $\Lambda^{(\ell)}$ either generates an extreme ray of $\Lambda_+ := \Lambda_+(p,e)$ or satisfies $\textup{mult}_{p^{(\ell)}}(x) = 1$.  We show both this secondary result as well as the primary result via induction.}

{For the base case of the secondary result, note that if $x$ generates an extreme ray of $\Lambda_+^{(1)}$ then by Corollary~\ref{cor:ext-der} either $\textup{mult}_{p^{(1)}}(x)=1$ or $x$ is an extreme ray of $\Lambda_+$ (with $\textup{mult}_{p}(x)\geq 3$).  For the primary result, note that $\Lambda_+$ is Terracini convex. If $x^{(1)},\ldots,x^{(d-3)}$ are extreme rays of $\Lambda_+$, then their sum has rank at most $d-3$ (since the hyperbolic rank function is subadditive~\cite{amini2018non})
and hence has multiplicity at least three with respect to $p$. It follows from Proposition~\ref{prop:dt} that $\Lambda_+^{(1)}$ is $(d-3)$-Terracini convex.}

{For the inductive hypothesis of the secondary result, assume that if $x$ generates an extreme ray of $\Lambda_+^{(\ell-1)}$ then either $x$ generates an extreme ray of $\Lambda_+$ or
$\textup{mult}_{p^{(\ell-1)}}(x) = 1$.  For the primary result, assume that $\Lambda_+^{(\ell-1)}$ is $(d-\ell-1)$-Terracini convex.}

We now establish the inductive step for the {secondary} result.  If $x$ {generates} an extreme ray of $\Lambda_+^{(\ell)}$ then by Corollary~\ref{cor:ext-der} either $\textup{mult}_{p^{(\ell)}}(x)=1$ or $x$ {generates} an extreme ray of $\Lambda_+^{(\ell-1)}$ with $\textup{mult}_{p^{(\ell-1)}}(x)\geq 3$, and so by the inductive hypothesis is an extreme ray of $\Lambda_+$.

Finally we establish the inductive step of the {primary result} by applying Proposition~\ref{prop:dt}. Let
$x^{(1)},\ldots,x^{(d-\ell-2)}$ be extreme rays of $\Lambda_+^{(\ell-1)}$.
Assume that each $x^{(i)}$ has multiplicity at least three with respect to
{$p^{(\ell-1)}$} (otherwise we are done).
Based on the inductive hypothesis, each $x^{(i)}$ must be an extreme ray of $\Lambda_+$ and so must have rank one with respect to $p$ by assumption.
Then $\sum_{i=1}^{d-\ell-2}x^{(i)}$ has rank at most $d-\ell-2$, and hence multiplicity at least $\ell+2$, with respect to $p$.
By applying Theorem~\ref{thm:mult3} $\ell-1$ times, and noting that {$p, p^{(1)}, \ldots, p^{(\ell-1)}$} all have degree at least three,
we see that $\sum_{i=1}^{d-\ell-2}x^{(i)}$ has multiplicity at least $\ell+2 - (\ell-1) = 3$ with respect to {$p^{(\ell-1)}$}.
Then, by Proposition~\ref{prop:dt},  $\Lambda_+^{(\ell)}$ is $(d-\ell-2)$-Terracini convex.
\end{proof}
We conclude by discussing three concrete special cases of Theorem~\ref{thm:rk1-dr}.
\begin{example}[Hyperbolicity cones associated with permanents]
	If $p(x) = \prod_{i=1}^{d}x_i$ and $e$ is the vector of all ones,
	then the corresponding hyperbolicity cone is the nonnegative orthant.
	This is Terracini convex and all of its extreme rays have rank one.  {In this case, if $e^{(1)}, \dots, e^{(\ell)} \in \RR^d_{++}$, then $D_{e^{(\ell)}} \cdots D_{e^{(1)}} p(x)$ is the \emph{permanent} of the $d \times d$ matrix with columns $e^{(1)}, \dots, e^{(\ell)}$ and $d-\ell$ copies of $x$.  Theorem~\ref{thm:rk1-dr} then tells us that the hyperbolicity cone associated with this permanent is $(d-\ell-2)$-Terracini convex as long as $1\leq \ell \leq d-3$.}
\end{example}
\begin{example}[Hyperbolicity cones associated with mixed discriminants]
	If $p(X) = \det(X)$ and $e$ is the identity matrix, then the corresponding hyperbolicity cone
	is the positive semidefinite cone. This is Terracini convex and all of its extreme rays have rank one.
	In this case, if ${E^{(1)}},\ldots,{E^{(\ell)}}$ are positive definite matrices then the quantity
	$D_{{E^{(\ell)}}} \cdots D_{{E^{(1)}}} p(X)$ is known as the \emph{mixed discriminant} of
	the $d$-tuple of matrices $({E^{(1)}}, \ldots, {E^{(\ell)}},X,\ldots,X)$. Theorem~\ref{thm:rk1-dr} then tells us that the
	hyperbolicity cone associated with this mixed discriminant is $(d-\ell-2)$-Terracini convex as long as $1\leq \ell \leq d-3$.
\end{example}
\begin{example}[Hyperbolicity cones associated with mixed discriminants of Hankel matrices]
	Consider the cone ${\mathcal{H}}_{d+1}$ of $(d+1)\times (d+1)$
	symmetric positive semidefinite Hankel matrices. This can be viewed as the hyperbolicity
	cone associated with the determinant restricted to the $2d+1$-dimensional
	subspace of Hankel matrices. Its extreme rays have the form
	\[ \phi_{2,d}(x,y)\phi_{2,d}(x,y)' = \begin{bmatrix} x^d\\x^{d-1}y\\\vdots\\ xy^{d-1}\\y^d\end{bmatrix}\begin{matrix}
	\begin{bmatrix} x^d & x^{d-1}y & \cdots & xy^{d-1}&y^d\end{bmatrix}\\
	\phantom{\begin{bmatrix} x^d & x^{d-1}y & \cdots & xy^{d-1}&y^d\end{bmatrix}}\\
	\phantom{\begin{bmatrix} x^d & x^{d-1}y & \cdots & xy^{d-1}&y^d\end{bmatrix}}\\
	\phantom{\begin{bmatrix} x^d & x^{d-1}y & \cdots & xy^{d-1}&y^d\end{bmatrix}}\\
	\phantom{\begin{bmatrix} x^d & x^{d-1}y & \cdots & xy^{d-1}&y^d\end{bmatrix}}
\end{matrix} \]
	and are rank one as symmetric matrices, {and therefore} have rank one with respect to the determinant polynomial. The
	cone ${\mathcal{H}}_{d+1}$
	is also linearly isomorphic to the cone ${\mathcal{C}_{2,2d}}$ over the homogeneous moment curve of degree $2d$, which is Terracini convex {from Corollary~\ref{cor:momentcurveterracini}}.
	As such, if we choose ${E^{(1)}},\ldots,{E^{(\ell)}}$ to be positive definite $(d+1)\times (d+1)$ Hankel matrices
	and if $1\leq \ell \leq d-2$, then the mixed discriminant of $({E^{(1)}},\ldots,{E^{(\ell)}},X,\ldots,X)$ restricted to
	Hankel matrices $X$ {yields an associated} hyperbolicity cone that is $(d-1-\ell)$-Terracini convex.
\end{example}

\section{Discussion}
\label{sec:discussion}

In this paper we introduced the notion of Terracini convex cones, generalizing
the notion of neighborly polyhedral cones to the non-polyhedral setting in a way that
includes examples such as the positive semidefinite cone and the cone over the moment curve.
This suggests the pursuit of a broader program that seeks to extend key notions
from polyhedral combinatorics to more general convex cones.

\paragraph{Explicit constructions}
	A significant feature of the literature on neighborly polytopes -- arguably, a principle reason for considering such polytopes in the first place -- is that they offer examples of various extremal polyhedral constructions.  Obtaining similar constructions with non-polyhedral Terracini convex cones would offer an interesting point of comparison with the polyhedral case.  For example, we are not aware whether the non-degeneracy and regularity conditions of Section~\ref{sec:neighborly} are necessary to conclude that $k$-neighborly cones are $k$-Terracini convex, and identifying potential counterexamples would provide an interesting extremal class of convex cones.  In a different direction, explicit constructions for linear images of the positive-semidefinite cone that are Terracini-convex would immediately yield explicit (non-random) families of linear maps for which the associated low-rank inverse problems considered in Section~\ref{sec:inv-psd} may be solved exactly via semidefinite programming; despite significant attention devoted to this question, we are not aware of any such families of linear maps.

\paragraph{Beyond generalizing neighborliness}
A simplicial polytope is one in which every proper face
is a simplex. In the spirit of this paper, a natural analogue in the non-polyhedral conic setting
would be a closed pointed convex cone for which every proper face is Terracini convex. Let us call
such convex cones \emph{boundary Terracini convex}.
Clearly the cone over any simplicial polytope is boundary Terracini convex, but
boundary Terracini convex cones are a much richer class.
One interesting example is the epigraph of the nuclear norm, i.e.,
$\{(X,t)\in \RR^{m\times m} \times \RR\;:\; \|X\|_{\star} \leq t\}$.
One can check that all of the proper faces of this convex cone are linearly isomorphic to
positive semidefinite cones. Moreover, we can deduce from~\cite[Corollary 17]{renegar2006hyperbolic}
that if $\Lambda_+$ is
a hyperbolicity cone that is boundary Terracini convex, then so are
derivative relaxations of $\Lambda_+$ (as long as they are pointed).
It would be interesting to study such boundary Terracini convex cones in more detail.

\paragraph{Weaker notions of Terracini convexity}
The key condition~\eqref{eq:kterracini} in the definition of $k$-Terracini convexity is required to hold for
every subset of at most $k$ extreme rays. It is natural to consider weaker notions of $k$-Terracini convexity
that only require~\eqref{eq:kterracini} to hold for `many' subsets of at most $k$ extreme rays.
By ruling out certain explicit configurations of $k$ extreme rays
such a definition would generalize important existing variations on neighborliness,
such as $k$-neighborly centrally symmetric polytopes (in which subsets of $k$ extreme points containing
an antipodal pair are excluded).
Another approach would be to require that~\eqref{eq:kterracini} hold for suitably generic subsets of at most
$k$ extreme rays. Seeking and studying examples of convex cones that are generically $k$-Terracini convex but
not $k$-Terracini convex, would lead to a deeper understanding of Terracini convexity and its variants.

\paragraph{Possible further constructions of $k$-Terracini convex cones}
We have seen that the positive semidefinite cone and the cone over the moment curve (or equivalently the
cone of Hankel positive semidefinite matrices) are Terracini convex. These are both
examples of spectrahedral cones (intersections of a positive semidefinite cone
with a subspace) with all extreme rays having rank one. This very special class of
spectrahedral cones were classified by Blehkerman, Sinn, and Velasco~\cite{blekherman2017sums}
and are closely connected to questions about the relationship between nonnegative
polynomials and sums of squares. It would be interesting to investigate the
Terracini convexity properties of spectrahedral cones with only rank one extreme rays.
Going one step further, one could similarly investigate the Terracini convexity properties
of hyperbolicity cones with only (hyperbolic) rank one extreme rays. Unlike the spectrahedral
setting, we are not aware of any nontrivial characterization of this class of convex cones.

Theorem~\ref{thm:most-tc} shows that with high probability, Gaussian
random linear images of the positive semidefinite cone are $k$-Terracini
convex, for a suitable $k$. The specific properties of the
positive semidefinite cone are only used in isolated places in the argument, and do not seem
to be essential. It is plausible that there an analogue of Theorem~\ref{thm:most-tc}
where the positive semidefinite cone is replaced with any Terracini convex hyperbolicity cone,
or perhaps even any Terracini convex cone. If this were the case, it would be a substantial further
generalization
of the fact that Gaussian random linear images of the simplex are $k$-neighborly polytopes, for suitable $k$
\cite{donohotanner2005neighborliness}. It would also suggest the broader applicability of the notion of Terracini
convexity for understanding convex relaxations of inverse problems.

\paragraph{Obstructions to lifts of convex sets} Another setting in which neighborliness is
useful, and Terracini convexity may find applications, is in the study of lifted representations of
convex sets. Given a convex set $\mathcal{C}$ and a closed convex cone $\mathcal{K}$, we say that
$\mathcal{C}$ has a $\mathcal{K}$-lift if we can express $\mathcal{C}$ as the linear
projection of an affine slice of $\mathcal{K}$. Such representations of convex sets are of importance
when convex optimization problems are expressed in conic form. In particular, they play a prominent
role in the study of the expressive power of linear, second-order cone, and semidefinite programming of
a given size (see, e.g.,~\cite{fawzi2020lifting} for a recent survey). It turns out if a convex set satisfies
a notion of $k$-neighborliness that is somewhat weaker than that studied in this paper, then it cannot have
a $\mathcal{K}$-lift where $\mathcal{K}$ is a product of finitely many $k\times k$ positive
semidefinite cones~\cite{averkov2019optimal}.
A natural extension of this line of inquiry would be to investigate whether Terracini convexity
properties also provide obstructions to the existence of certain lifted representations of convex sets.

\section*{Acknowledgements}
The authors would like to thank Rainer Sinn for helpful conversations.  V.C. was supported in part by National Science Foundation grant CCF-1637598, in part by National Science Foundation grant DMS-2113724, 
and in part by AFOSR grant FA9550-20-1-0320. J.\ S.\ was supported in part by an Australian Research Council 
Discovery Early Career Researcher Award (project number DE210101056) funded by the Australian Government.

\bibliographystyle{alpha}
\bibliography{terracini-bib}

\end{document}